\documentclass[a4paper,12pt]{article}
\usepackage{mathtools}
\usepackage{amsmath}
\usepackage{amssymb}
\usepackage{amsthm}
\usepackage[left=2cm,right=2cm,top=3.5cm,bottom=3.5cm]{geometry}
\usepackage{graphicx}
\usepackage{tikz}
\usepackage{tikz-network}
\usepackage{bbm}
\usepackage{wasysym}
\usepackage{eucal}
\usepackage{mathrsfs}
\usepackage{float}
\usepackage[skip=0pt]{caption}
\usepackage{enumerate}
\usepackage{cite}
\usepackage{hyperref}

\theoremstyle{plain}
\newtheorem{thm}{Theorem}
\newtheorem{lem}[thm]{Lemma}
\newtheorem{cor}[thm]{Corollary}
\newtheorem{pro}[thm]{Proposition}

\theoremstyle{definition}
\newtheorem{defn}[thm]{Definition}
\newtheorem{rem}[thm]{Remark}
\newtheorem{ex}[thm]{Example}

\def\al{\textit{mm-alternating }}
\def\B{\mathscr{B}}
\def\Bt{\mathscr{B}({\theta})}
\def\Bi{\mathscr{B}({\infty})}
\def\i{\textsl{i}}
\def\M{\mathcal{M}}

\begin{document}
	\title{On the singularity and the inverse of 3-colored digraphs}
	\author{
		Md Isheteyak Zaffer\footnote{Department of Mathematical Sciences, Tezpur University, Tezpur, Assam-784028, India; (isheteyak.zaffer@gmail.com).} }
	\date{}
	\maketitle
	\begin{abstract}
		This article considers the class of connected 3-colored digraphs. Let $G$ be a 3-colored digraph and $A(G)$ be its adjacency matrix. $G$ is said to be non-singular (resp. singular) if $A(G)$ is a non-singular (resp. singular) matrix. A connected digraph is k-cyclic if it has $n$ vertices and $n+k-1$ edges. The main objective of this article is to provide a characterization of non-singular 3-colored unicyclic and bicyclic digraphs. If $A(G)$ is non-singular and $A(G)^{-1}$ has a $zero$ diagonal, then $A(G)^{-1}$ can be realized as the adjacency matrix of a digraph with complex weights. Therefore, we also identify all 3-colored bicyclic digraphs such that the diagonal of $A(G)^{-1}$ is zero. Furthermore, we study the invertibility of these digraphs and identify all those bicyclic 3-colored digraphs whose inverse is also a 3-colored digraph. We conduct the same study for the class of unicyclic 3-colored digraphs.
	\end{abstract}

	{\it Keywords:} Adjacency matrix; weighted directed graphs; non-singularity; graph inverse; unimodular graphs.
	
	{\it AMS subject classification:} 05C50; 15A09; 05C22; 15C75; 05C22; 05C20.
	
	\section{Introduction}
	
	All graphs in this study are connected and simple, with edge weights from the set $\{\pm1,\pm\i\}$, where $\i =\sqrt{-1}$.
	
	Let $G=(V(G),E(G))$ be a graph with vertex set $V(G)$ and edge set $E(G)$. A connected graph $G$ is called \textit{k-cyclic} if its order is $n$ and its size is $n+k-1$. In particular, a connected graph is \textbf{unicyclic} if both its order and size equals $n$, and a connected graph is \textbf{bicyclic} if its order is $n$ and its size is $n+1$. It can be observed that a unicyclic graph has exactly one cycle, whereas a bicyclic graph contains at least two cycles. If the cycles in a bicyclic graph $B$ are edge disjoint, then $B$ is called an $\infty-$type graph; otherwise, it is referred to as a $\theta-$type graph. It follows that, in an $\infty-$type bicyclic graph, the cycles share at most one vertex, whereas in a $\theta-$type bicyclic graph, any two cycles share at least two vertices.
	
	Let $G$ be a simple undirected graph and $\vec{G}$ be a digraph with $G$ as its underlying graph. We use $[i,j]$ to denote an edge between vertices $i$ and $j$ in $G$, and $(i,j)$ to denote an arc from $i$ to $j$ in $\vec{G}$. Most of the time, we use $G$ to denote both the digraph and the underlying graph; and the distinction will be clear from the context. Let $G$ be a digraph on $n$ vertices, with weight $w_{ij}$ on the arc $(i,j)$, and let $\overline{w}_{ij}$ be the complex conjugate of $w_{ij}$. The adjacency matrix $A(G)=[a_{ij}]$ is an $n\times n$ matrix defined as $$a_{ij}=\left\{\begin{array}{cc}
		w_{ij} & \text{ when }(i,j)\in E(G),\\
		\overline{w}_{ij} & \text{ when }(j,i)\in E(G),\\
		0 & \text{otherwise}.\\
	\end{array}\right.$$
	
	From the definition of $A(G)$, it is clear that, as far as the adjacency matrix of $G$ is concerned, the weight of an arc can be assumed to be in $\{-1,1,\i\}$, because an arc $(i,j)$ with weight $-\i$ can be replaced with the arc $(j,i)$ with weight $\i$ without changing the adjacency matrix. Moreover, the direction of an arc with weight $\pm 1$ is immaterial in the study of the adjacency matrix of $G$. Therefore, we treat arcs with weight $\pm1$ as undirected and always assume the weight of a directed arc to be $\i$. We label edges with weights $1$ and $-1$ as red and blue, respectively, and label arcs with weight $\i$ as green, and call this graph 3-colored digraph. The concept of 3-colored digraphs was introduced by Bapat et al. \cite{DK}. We denote the classes of unicyclic and bicyclic 3-colored digraphs by $\mathscr{U}$ and $\B$, respectively. Moreover, the subclasses of $\infty$-type and $\theta$-type bicyclic digraphs are denoted by $\B(\infty)$ and $\B(\theta)$, respectively. 
	
	Let $G$ be a (di)graph. An \textit{elementary subgraph} $H$ of $G$ is a subgraph in which each component is either a cycle or an independent edge. $H$ is called a \textit{spanning elementary subgraph} if it is an elementary subgraph and $V(H) = V(G)$. A component of an elementary subgraph is termed \textit{non-singular} if it is a cycle with a weight different from $1$; all other components are called \textit{singular}. A \textit{perfect matching} $\M$ of a graph $G$ is a spanning elementary subgraph in which every component is an independent edge.
	
	The weight of a cycle $\Gamma = [i_1, \dots, i_n]$, denoted by $w(\Gamma)$, is the product $w(i_1, i_2) \times \dots \times w(i_{n-1}, i_n)$, where $w(i,j)$ is the weight of the arc $(i,j)$. The weight of a path is defined similarly.
	
	A spanning elementary subgraph is called a \textit{contributing spanning elementary subgraph} if it does not contain any cycle of weight $\pm \i$. A graph (digraph) is said to be \textit{non-singular} (resp. \textit{singular}) if its adjacency matrix is non-singular (resp. singular). Singularity is a fundamental property of a digraph and is crucial for studying its spectral properties and invertibility. The following lemma from \cite{DK1} provides a formula for calculating the determinant of a 3-colored digraph. 
	
	\begin{lem}(\cite{DK1}, Lemma 3)\label{adjdet}
		Let $G$ be a 3-colored digraph on $n$ vertices. Then 
		$$\det A(G)=\sum_{H\in \mathscr{C}}(-1)^{n-|S_H|}2^{|C_H|},$$
		where $\mathscr{C}$ is the set of all contributing spanning elementary subgraphs $H$ of $G$, and $|S_H|$ and $|C_H|$ are the number of singular components and cycles in $H$, respectively.
	\end{lem}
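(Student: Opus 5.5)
We start from the Leibniz expansion $\det A(G)=\sum_{\sigma\in S_n}\operatorname{sgn}(\sigma)\prod_{v}a_{v\sigma(v)}$. Only permutations whose functional digraph uses edges of $G$ contribute, and such a permutation splits into fixed-point-free cycles. Since $G$ has no loops, every cycle of $\sigma$ is either a transposition $(u\,v)$ along an edge $[u,v]$ or a directed cycle of length at least $3$ along a cycle of $G$. So the supports of the contributing permutations are exactly the spanning elementary subgraphs $H$. Each $H$ with $c$ cycles comes from $2^{c}$ permutations, one for each choice of orientation of each cycle. We will therefore group the permutations by $H$ and compute the total contribution of each $H$.

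For a fixed $H$, the sign of any $\sigma$ with support $H$ is $(-1)^{n-(\#\text{cycles of }\sigma)}=(-1)^{n-|H|}$, where $|H|$ is the number of components of $H$. An edge component $[u,v]$ gives the factor $a_{uv}a_{vu}=|w_{uv}|^2=1$, because all weights lie in $\{\pm1,\pm\i\}$. A cycle component $\Gamma$ traversed in one direction gives $w(\Gamma)$, and traversed in the other direction gives $\overline{w(\Gamma)}$, since $a_{ji}=\overline{a_{ij}}$. Summing over the orientations, the cycles contribute $\prod_{\Gamma}\bigl(w(\Gamma)+\overline{w(\Gamma)}\bigr)=\prod_\Gamma 2\operatorname{Re}w(\Gamma)$. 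Hence $H$ contributes
\[(-1)^{n-|H|}\prod_{\Gamma\in H}2\operatorname{Re}\,w(\Gamma).\]

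Every $w(\Gamma)$ is a product of elements of $\{\pm1,\pm\i\}$, so it lies in $\{\pm1,\pm\i\}$. If $w(\Gamma)=\pm\i$, then $2\operatorname{Re}w(\Gamma)=0$, and this is exactly why only contributing subgraphs survive. If $w(\Gamma)=1$, the factor is $2=-(-2)$. If $w(\Gamma)=-1$, the factor is $-2$. Write $|C_H|$ for the number of cycles of $H$ and $r$ for the number of cycles of weight $1$. Then the product over the cycles equals $(-1)^{|C_H|}(-1)^{r}2^{|C_H|}$.

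It remains to match signs. The singular components are the edges together with the cycles of weight $1$, so $|S_H|=|H|-|C_H|+r$. Consequently
\[(-1)^{n-|H|}(-1)^{|C_H|+r}=(-1)^{n-|H|+|C_H|-r}=(-1)^{n-|S_H|},\]
using $(-1)^{2|C_H|-2r}=1$. This gives the formula in the lemma. The main point requiring care is this sign bookkeeping, in particular checking that the contributing cycles (those of weight $\pm1$) turn the orientation sum into $\pm2$ with exactly the sign that converts the count of components into the count of singular components. Two further details need confirming: that the two orientations of a cycle are distinct permutations (true because cycles have length at least $3$), and that no edge component is double counted.
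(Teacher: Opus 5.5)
Your proof is correct: grouping the Leibniz terms by their supports (the spanning elementary subgraphs) is the standard Harary--Sachs-type argument. The ingredients are $a_{uv}a_{vu}=|w_{uv}|^2=1$ for matched edges, $w(\Gamma)+\overline{w(\Gamma)}=2\,\mathfrak{R}(w(\Gamma))$ over the two orientations of each cycle, and the bookkeeping $|S_H|=|H|-|C_H|+r$. The paper gives no proof of its own and simply quotes the lemma from \cite{DK1}. One cosmetic slip: to pass from $(-1)^{n-|H|+|C_H|+r}$ to $(-1)^{n-|H|+|C_H|-r}$ the identity you need is $(-1)^{2r}=1$, not $(-1)^{2|C_H|-2r}=1$.
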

	
	The degree $d_i$ of a vertex $i$ in a graph $G$ is defined as the number of edges incident to $i$, including both directed and undirected edges. Let $D(G)$ denote a diagonal matrix, where the diagonal entries are the degrees $d_i$ for each vertex $i \in V(G)$. The Laplacian matrix $L(G)$ of $G$ is then expressed as $L(G) = D(G) - A(G)$, with $A(G)$ being the adjacency matrix of $G$. Bapat, Kalita, and Pati in \cite{DK} characterized 3-colored digraphs for which the Laplacian matrix is non-singular. Kalita and Pati further investigated the reciprocal eigenvalue properties of unicyclic 3-colored digraphs in \cite{DK1}. Kalita and Sarma \cite{DK2}, explored the invertibility of these digraphs when they possessed a unique perfect matching, identifying all such 3-colored unicyclic digraphs with unicyclic inverses. Extending this, Kalita and Sarma in \cite{DK3} characterized unicyclic 3-colored digraphs with a unique perfect matching that have bicyclic inverses. Graphs with edge weights of unit modulus are referred to as complex unit gain graphs; see \cite{NR}. Recent contributions to the study of complex unit gain graphs in terms of their adjacency matrices and eigenvalues can be found in \cite{LL, FQ, R} and in the references therein.
	
	The non-singularity of (di)graphs is pivotal for analyzing their spectral properties and is linked to the nullity of a (di)graph. This leads to the following question: \textit{Does there exist a non-singular 3-colored digraph?} If so, \textit{can we characterize these digraphs?} In this paper, we address these questions for digraphs within the classes $\mathscr{U}$ and $\mathscr{B}$. If $A(G)^{-1}$ represents the inverse of the adjacency matrix of a non-singular digraph $G$, then $A(G)^{-1}$ can be considered the adjacency matrix of another weighted graph with complex weights, provided all diagonal entries of $A(G)^{-1}$ are zero. This new (di)graph with adjacency matrix $A(G)^{-1}$ is called the inverse of $G$. We identify all such non-singular digraphs in $\mathscr{U}$ and $\mathscr{B}$ where the diagonal of the inverse of the adjacency matrix is \textit{zero}. The concept of a graph inverse was first introduced by Harary and Minc in \cite{HM}, defining a graph $G$ as invertible if $A(G)^{-1}$ consists solely of 0 and 1 entries, with the resulting graph being termed the inverse of $G$. They also showed that $K_2$ is the only connected invertible graph under this definition. Given the restrictive nature of this definition, various other notions of graph inverses have been explored; see \cite{god, DK4}. The study of graph invertibility has attracted considerable attention; see \cite{DK2, DK4, god, PP1, AK} and references therein. While research on graph inverses has largely been confined to undirected graphs, except for \cite{DK2, DK3}, we extend our investigation to general 3-colored unicyclic and bicyclic digraphs. It should be noted that the research in \cite{DK2} and \cite{DK3} focuses on the class of unicyclic 3-colored digraphs that possess a unique perfect matching; however, we do not impose such restrictions in our study.
	
	From Lemma~\ref{adjdet}, it becomes clear that when analyzing the adjacency matrix of a 3-colored digraph, the study is essentially equivalent to that of ordinary signed graphs (undirected graphs with weights $\pm 1$), if all cycles have weights $\pm 1$. Therefore, we assume that at least one cycle in the digraph has a weight of $\pm \i$, distinguishing our study from that of ordinary signed graphs. Since trees lack cycles, the study of 3-colored tree with respect to adjacency matrices is the same as the study of a simple tree. Consequently, we focus on digraphs containing at least one cycle.
	
	This article is structured as follows: Section~\ref{u} examines the singularity of 3-colored digraphs in $\mathscr{U}$ and $\mathscr{B}$, providing characterizations of non-singular digraphs. Section~\ref{3} identifies non-singular digraphs in $\mathscr{U} \cup \mathscr{B}$ where the diagonal of the inverse adjacency matrix is zero. Section~\ref{4} delves into the concept of graph inverses, applying results from Sections~\ref{u} and \ref{3} to determine all digraphs in $\mathscr{U} \cup \mathscr{B}$ whose inverses are also 3-colored digraphs. Additionally, this section also identifies all unimodular digraphs in both $\mathscr{U}$ and $\mathscr{B}$ (digraphs with a determinant of $\pm 1$).
	
	\section{Non-singular 3-colored digraphs}\label{u}
	
	Let $G$ be a (di)graph and $H$ be a subgraph of $G$. By $G-H$ we mean the induced subgraph of $G$ on the vertex set $V(G)-V(H)$. We denote the length of a cycle $\Gamma$ as $|\Gamma|$, and the length of a path $P$ as $|P|$. A path from vertex $i$ to $j$ is denoted as $i\leadsto j$ and an $i\leadsto j$ path that contains a $u\leadsto v$ path $P$ is denoted as $[i,\dots,u,P,v,\dots,j]$. Let $\Gamma_1,\dots, \Gamma_k$ be cycles in a graph $G$. Then, $G$ has a spanning elementary subgraph with cycles $\Gamma_{i_1},\dots, \Gamma_{i_j}$ as components if and only if $G-(\Gamma_{i_1}\cup\dots\cup\Gamma_{i_j})$ has a perfect matching, and cycles $\Gamma_{i_k}$ and $\Gamma_{i_l}$ are mutually vertex-disjoint for all $i_k,i_l$. The following remark is straightforward.
	
	\begin{rem}\label{detn}
		Let $G$ be a 3-colored digraph on $n$ vertices. Let $m_0$ be the number of perfect matchings of $G$, $m_{1_i}$ be the number of perfect matchings of $G-\Gamma_i$ where $w(\Gamma_i)\neq\pm\i$, $m_{2_{ij}}$ be the number of perfect matchings of $G-(\Gamma_i\cup\Gamma_j)$ where $w(\Gamma_i)\neq \pm\i\neq w(\Gamma_j)$ and are vertex-disjoint, etc. Then, by Lemma~\ref{adjdet},
		\begin{equation*}
			\begin{split}
				\det A(G)&=m_0 (-1)^{\frac{n}{2}}+2 \sum_{i}m_{1_i}(-1)^{n-\frac{n-|\Gamma_i|}{2}-S_i}+4\sum_{i\neq j}m_{2_{ij}}(-1)^{n-\frac{n-(|\Gamma_i|+|\Gamma_j|)}{2}-(S_i+S_j)}+\cdots\\
				&=m_0 (-1)^{\frac{n}{2}}+2\sum_{i}m_{1_i}(-1)^{\frac{n+|\Gamma_i|}{2}-S_i}+4\sum_{i\neq j}m_{2_{ij}} (-1)^{\frac{n+(|\Gamma_i|+|\Gamma_j|)}{2}-(S_i+S_j)}+\cdots,
			\end{split}
		\end{equation*}
		where $S_i=0$ if $w(\Gamma_i)=-1$ and $S_i=1$ if $w(\Gamma_i)=1$. The expression can be written as $$\det A(G)= m_0 (-1)^{\frac{n}{2}}-2\sum_{i}m_{1_i}w(\Gamma_i)(-1)^{\frac{n+|\Gamma_i|}{2}}+4\sum_{i\neq j}m_{2_{ij}}w(\Gamma_i)w(\Gamma_j) (-1)^{\frac{n+(|\Gamma_i|+|\Gamma_j|)}{2}}+\cdots.$$
	\end{rem}

	The following corollaries are the immediate consequence of Remark~\ref{detn}.
	\begin{cor}\label{upm}
		Let $G$ be a 3-colored digraph. If $G$ has a unique perfect matching, then $G$ is non-singular.
	\end{cor}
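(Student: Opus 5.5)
The plan is a parity argument on the determinant formula of Lemma~\ref{adjdet}, organised as in Remark~\ref{detn}. Lemma~\ref{adjdet} writes $\det A(G)$ as a sum over all contributing spanning elementary subgraphs $H$ of $G$. Each term is $(-1)^{n-|S_H|}2^{|C_H|}$, an integer. So $\det A(G)$ is an integer, and it is enough to show that it is odd, hence nonzero.

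I will split the contributing spanning elementary subgraphs according to whether they contain a cycle.
\begin{enumerate}
\item If $|C_H|=0$, every component of $H$ is an independent edge, so $H$ is a perfect matching of $G$. A perfect matching has no cycles, so it trivially has no cycle of weight $\pm\i$ and is contributing. All its components are singular, so $|S_H|=n/2$ and its term is $(-1)^{n/2}$. These are exactly the $m_0(-1)^{n/2}$ term of Remark~\ref{detn}.
\item If $|C_H|\ge 1$, the term $(-1)^{n-|S_H|}2^{|C_H|}$ is divisible by $2$, whatever the weights ($\pm1$) of the cycles in $H$.
\end{enumerate}

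Hence $\det A(G)\equiv m_0 \pmod 2$. By hypothesis $m_0=1$, so $\det A(G)$ is odd. In particular $\det A(G)\neq 0$, and $G$ is non-singular.

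The only point needing care is that the right-hand side of Lemma~\ref{adjdet} is genuinely an integer, so that parity makes sense even though $A(G)$ has non-real entries. This is immediate from the form of the lemma. The hypothesis that $G$ has a unique perfect matching is used only through $m_0=1$; nothing needs to be said about which cycles occur in $G$. So I expect no real obstacle.
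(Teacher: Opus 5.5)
Your proposal is correct and is essentially the paper's argument. The paper notes from Remark~\ref{detn} that $\det A(G)\equiv m_0 \pmod 2$ and $m_0=1$. You justify that congruence directly from Lemma~\ref{adjdet}: every contributing spanning elementary subgraph other than a perfect matching carries a factor $2^{|C_H|}$ with $|C_H|\ge 1$, while each perfect matching contributes $(-1)^{n/2}$.
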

	\begin{proof}
		This follows from Remark~\ref{detn}, as $\det A(G)\equiv m_0\pmod2$ and $m_0 = 1$.
	\end{proof}
	
	\begin{cor}\label{npm}
		Let $G$ be a 3-colored digraph. If every cycle in $G$ has weight $\pm \i$, then $\det A(G)=\pm m_0$, where $m_0$ denotes the number of perfect matchings of $G$.
	\end{cor}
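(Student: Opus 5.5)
The plan is to read this off from Lemma~\ref{adjdet}, using the expansion already recorded in Remark~\ref{detn}. The only task is to identify which spanning elementary subgraphs contribute.

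A contributing spanning elementary subgraph $H$ of $G$ may not contain any cycle of weight $\pm\i$. By hypothesis every cycle of $G$ has weight $\pm\i$, so no contributing $H$ contains a cycle, and every component of $H$ is an independent edge. Conversely, any spanning elementary subgraph with no cycle components is contributing. Hence the set $\mathscr{C}$ in Lemma~\ref{adjdet} is exactly the set of perfect matchings of $G$, and $|\mathscr{C}|=m_0$. In the notation of Remark~\ref{detn}, every $m_{1_i},m_{2_{ij}},\dots$ has an empty index set, because no cycle $\Gamma_i$ with $w(\Gamma_i)\neq\pm\i$ exists. Only the first term $m_0(-1)^{n/2}$ survives.

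To confirm the sign directly, take a perfect matching $H=\M$, which exists only when $n$ is even. Each of its $n/2$ components is an edge, hence singular, so $|S_H|=n/2$ and $|C_H|=0$. Its term in Lemma~\ref{adjdet} is therefore $(-1)^{n-n/2}2^0=(-1)^{n/2}$. This sign is the same for every perfect matching, so no cancellation occurs and
$$\det A(G)=(-1)^{n/2}m_0=\pm m_0 .$$
If $n$ is odd, there are no perfect matchings and no contributing subgraphs, so $\det A(G)=0=m_0$, which agrees with the claim.

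There is no real obstacle here. The one point needing care is to note that the cycles of a spanning elementary subgraph are cycles of $G$, so the hypothesis excludes all of them. I also read "cycle" in the hypothesis as including $2$-cycles: a single edge is treated as an edge component, never as a cycle, so that edge components are never excluded.
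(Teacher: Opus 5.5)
Your proof is correct and is the same argument the paper gives: under the hypothesis the only contributing spanning elementary subgraphs are the perfect matchings, each contributing $(-1)^{n/2}$ by Lemma~\ref{adjdet} and Remark~\ref{detn}, so $\det A(G)=(-1)^{n/2}m_0$; your explicit treatment of odd $n$ is a small but welcome addition. One wording slip: in your closing remark you mean that 2-cycles are \emph{excluded} from the hypothesis (a back-and-forth 2-cycle on an edge has weight $w\overline{w}=1$), which is what your clause after the colon says.
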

	\begin{proof}
		Since the weight of every cycle in $G$ is $\pm\i$, $G$ does not have any contributing spanning elementary subgraph other than the perfect matching(s). Hence, $\det A(G)=m_0\times (-1)^{\frac{|V(G)|}{2}}$, by Remark~\ref{detn}.
	\end{proof}

	\begin{lem}\label{evennomatch}
		Let $G$ be a digraph without any perfect matching, such that all cycles in $G$ are even. Then $\det A(G)=0$.
	\end{lem}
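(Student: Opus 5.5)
Use Lemma~\ref{adjdet} and show that $\mathscr{C}$ is empty, or that whatever survives cancels. The key observation is that all cycles of $G$ are even. Every component of a spanning elementary subgraph is then either an independent edge or an even cycle. So the components of any spanning elementary subgraph $H$ all have an even number of vertices.

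An even cycle $[i_1,i_2,\dots,i_{2k}]$ decomposes into two perfect matchings of its vertex set, namely the edges $[i_1,i_2],[i_3,i_4],\dots$ and the edges $[i_2,i_3],\dots,[i_{2k},i_1]$. If $H$ exists, replace every cycle component of $H$ by one of these matchings. The result is a perfect matching of $G$, which contradicts the hypothesis. Hence $G$ has no spanning elementary subgraph at all, so $\mathscr{C}=\emptyset$, and Lemma~\ref{adjdet} gives $\det A(G)=0$ as an empty sum.

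Two small points need checking. First, Lemma~\ref{adjdet} is stated for 3-colored digraphs. For a general digraph with arbitrary weights one would instead use the permutation expansion directly: every nonzero term of $\det A(G)$ corresponds to a permutation whose cycles are $2$-cycles (edges) or directed cycles of $G$ of length $\geq 3$. The same decomposition shows that no such permutation exists, so every term vanishes. Second, if $n$ is odd the statement is immediate, because with only even components no spanning elementary subgraph can exist. The only real content is the splitting of an even cycle into two matchings, and I do not expect any obstacle there.
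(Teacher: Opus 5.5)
Your proposal is correct and is essentially the paper's own proof: any spanning elementary subgraph would, after replacing each (necessarily even) cycle component by one of its two perfect matchings, yield a perfect matching of $G$, so no spanning elementary subgraph exists and the determinant expansion of Lemma~\ref{adjdet} is an empty sum. Your remark that the permutation expansion extends the argument to arbitrary edge weights is a valid extra, but it is not needed in the paper's 3-colored setting.
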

	\begin{proof}
		Let $H$ be a spanning elementary subgraph of $G$. Since $G$ does not contain perfect matching, $H$ must necessarily include at least one cycle. As every even cycle admits a perfect matching, combining the perfect matchings of the cycle components of $H$ with the independent edges of $H$ would produce a perfect matching of $G$, which contradicts the fact that $G$ does not have a perfect matching. Therefore, $G$ cannot contain any spanning elementary subgraph, implying that $\det A(G) = 0$, by Remark~\ref{detn}.
	\end{proof}
	
	Before we proceed further we need the following proposition.
	
	\begin{pro}\label{mperf}
		Let $G$ be a graph. Then $G$ has more than one perfect matchings if and only if $G$ contains at least one even cycle $\Gamma$ such that $G-\Gamma$ also has a perfect matching.
	\end{pro}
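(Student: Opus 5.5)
Both directions go through symmetric differences of perfect matchings, the standard alternating-cycle argument.

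For the forward direction, suppose $\M_1\neq\M_2$ are two perfect matchings of $G$ and consider the symmetric difference $\M_1\triangle\M_2$. Every vertex of $G$ is covered exactly once by each matching. So in $\M_1\triangle\M_2$ every vertex has degree $0$ or $2$: degree $0$ if it is matched identically in both, and degree $2$ otherwise, since its two matching edges then differ. Hence $\M_1\triangle\M_2$ is a disjoint union of cycles whose edges alternate between $\M_1$ and $\M_2$. Each such cycle is therefore even, and it is non-empty because $\M_1\neq\M_2$.

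Pick one such cycle $\Gamma$. Every vertex of $\Gamma$ is matched by $\M_1$ to a neighbour along $\Gamma$, because the $\M_1$-edge at that vertex is one of the two edges of $\Gamma$ there. Thus no edge of $\M_1$ joins $V(\Gamma)$ to $V(G)-V(\Gamma)$. It follows that $\M_1$ restricted to the edges outside $\Gamma$ is a perfect matching of $G-\Gamma$, which is the induced subgraph on the remaining vertices. This gives the required even cycle.

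For the converse, let $\Gamma$ be an even cycle with $G-\Gamma$ having a perfect matching $\M'$. An even cycle has exactly two perfect matchings, $\M_a$ and $\M_b$, formed by its alternate edges. Then $\M'\cup\M_a$ and $\M'\cup\M_b$ are two perfect matchings of $G$, and they are distinct because $\M_a\cap\M_b=\emptyset$ and $\Gamma$ has at least one edge. Therefore $G$ has more than one perfect matching.

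The only step needing care is the claim in the forward direction that $\M_1$ has no edge leaving $V(\Gamma)$. This follows from the degree-2 alternation argument: every vertex of $\Gamma$ is already saturated by an $\M_1$-edge lying inside $\Gamma$. Orientations and weights play no role, since the statement concerns only the underlying graph.
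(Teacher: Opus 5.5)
Your proof is correct and takes the same route as the paper: you extract an even $\M_1$/$\M_2$-alternating cycle from the symmetric difference of two distinct perfect matchings and restrict $\M_1$ to its complement, and your converse uses the two alternate-edge matchings of $\Gamma$ exactly as the paper does. Your degree-$0$-or-$2$ argument on $\M_1\triangle\M_2$ is a cleaner way to get the alternating cycle than the paper's step-by-step path extension, whose stopping condition is stated only loosely.
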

	\begin{proof}
		First, assume that $G$ has more than one perfect matching. We begin by showing that $G$ contains an even cycle. Let $\M$ and $\M'$ be two distinct perfect matchings of $G$. It follows that the symmetric difference $\M \Delta \M'$ is non-empty. If $[u,v]$ is a pendant edge in $G$, then $[u,v] \in \M \cap \M'$, implying $[u,v] \notin \M \Delta \M'$. Consequently, if $[u,v] \in \M \Delta \M'$, then $deg(u) \geq 2$ and $deg(v) \geq 2$. Without loss of generality, assume that $\M \setminus\M'$ is non-empty, and let $[u,v] \in \M \setminus \M'$. Then, there exist vertices $u_1$ and $v_1$ such that $[u,u_1],[v,v_1] \in \M' - \M$.
		
		If $[u_1,v_1]$ is an edge, then $\Gamma = [u,u_1,v_1,v]$ forms an even cycle in $G$. Otherwise, we find vertices $u_2$ and $v_2$ such that $[u_1,u_2],[v_1,v_2] \in \M \setminus \M'$. This process continues until we find vertices $u_j$ and $v_j$ where $[u_j,v_j] \in E(G)$. Clearly, $\Gamma = [u,u_1,\dots,u_j,v_j,\dots,v_1,v]$ is the desired even cycle in $G$. Finally, observe that $\M \setminus \{[u,v],[u_1,u_2],\dots,[v_2,v_1]\}$ is a perfect matching of $G - \Gamma$.
		
		Conversely, assume that $G$ contains an even cycle $\Gamma = [u_1,\dots,u_{n}]$ such that $G - \Gamma$ has a perfect matching $\M_0$. Then, $\M_0 \cup \{[u_1,u_2],\dots,[u_{n-1},u_n]\}$ and $\M_0 \cup \{[u_1,u_n],\dots,[u_3,u_2]\}$ are two distinct perfect matchings of $G$.
	\end{proof}
	
	\begin{defn}\label{ind}
		Let $G$ be a graph. We call a cycle $\Gamma$ \textbf{independent} in $G$ if $G-\Gamma$ has a perfect matching.
	\end{defn}
	
	\begin{rem}\label{rem1}
		From Proposition~\ref{mperf}, it follows that an independent cycle in a graph with more than one perfect matching is even, while in a graph with no perfect matching, it is odd. Furthermore, a graph with a unique perfect matching does not contain any independent cycle.
	\end{rem}
	
	\subsection{Non-singular digraphs in $\mathscr{U}$}
	
	The following theorem provides complete characterization of non-singular digraphs in $\mathscr{U}$.
	
	\begin{thm}\label{unons}
		Let $U\in\mathscr{U}$. Then $U$ is non-singular if and only if $U$ has a perfect matching.
	\end{thm}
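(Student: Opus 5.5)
The plan is to reduce the statement directly to Corollary~\ref{npm}, using the standing assumption from the introduction that at least one cycle of the digraph has weight $\pm\i$. Since $U\in\mathscr{U}$ is unicyclic, it has exactly one cycle $\Gamma$. Under that assumption, $w(\Gamma)=\pm\i$. Hence every cycle of $U$ has weight $\pm\i$, and Corollary~\ref{npm} gives
$$\det A(U)=\pm m_0,$$
where $m_0$ is the number of perfect matchings of $U$. Therefore $\det A(U)\neq 0$ if and only if $m_0\geq 1$, that is, if and only if $U$ has a perfect matching. This proves both directions at once.

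To make the argument transparent, I will also spell out why Corollary~\ref{npm} applies, using Lemma~\ref{adjdet}. A spanning elementary subgraph $H$ of $U$ either consists only of independent edges, in which case it is a perfect matching, or it contains the unique cycle $\Gamma$. In the second case $H$ is not contributing, because $w(\Gamma)=\pm\i$. So the sum in Lemma~\ref{adjdet} runs only over perfect matchings. Each perfect matching contributes the same sign $(-1)^{n/2}$, so no cancellation can occur.

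The only real point of care is the reliance on the convention that the cycle has weight $\pm\i$, and I will state it explicitly in the proof. Without it the claim is false. For example, a $4$-cycle of weight $1$ with all edges red has a perfect matching, yet its adjacency matrix is singular: two perfect matchings contribute $+1+1$ and the cycle contributes $-2\cdot 2/2$, giving $\det=0$. I expect no other obstacle, since the argument is a direct specialization of Corollary~\ref{npm}.
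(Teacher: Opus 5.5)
Your proposal is correct and is essentially the paper's own proof. Both use the standing convention that the unique cycle has weight $\pm\i$, then apply Corollary~\ref{npm} to get $\det A(U)=\pm m_0$, which vanishes exactly when $U$ has no perfect matching. Your unpacking via Lemma~\ref{adjdet} and your all-red $4$-cycle example showing that the convention is needed are both sound.
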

	\begin{proof}
		Let $\Gamma$ be the unique cycle in $U$. By assumption, $w(\Gamma) = \pm\i$, hence, by Corollary~\ref{npm}, $\det A(B) = m_0 \times (-1)^{\frac{|V(U)|}{2}}$, where $m_0$ is the number of perfect matchings of $U$.		
	\end{proof}
	
	\subsection{Non-singular digraphs in $\B$}\label{2}
	
	In this subsection, we identify non-singular bicyclic 3-colored digraphs. Before beginning our study, we note the following property of a $\theta-$type bicyclic graphs.
	
	\begin{rem}\label{paths}
		In a $\theta$-type bicyclic graph, there exist two vertices $i$ and $j$, and three $i \leadsto j$ paths such that these paths have no edge in common. Throughout the article we denote these paths by $\mathcal{P}_1$, $\mathcal{P}_2$, and $\mathcal{P}_3$. $B$ has three cycles given by $\Gamma_1 = \mathcal{P}_1 \cup \mathcal{P}_2$, $\Gamma_2 = \mathcal{P}_1 \cup \mathcal{P}_3$, and $\Gamma_3 = \mathcal{P}_2 \cup \mathcal{P}_3$. See Figure~\ref{pdiag}.
	\end{rem}
	\begin{figure}[H]
		\begin{center}
			\begin{tikzpicture}[scale=0.8]
				\SetVertexStyle[MinSize=0.1,FillColor=black]
				\Vertex[label=$i$,position=left]{1}
				\Vertex[x=0.7]{2}
				\Vertex[x=1.4]{3}
				\Vertex[x=2.1,label=$j$,position=above]{4}
				\Vertex[x=0.3,y=-1]{5}
				\Vertex[x=1,y=-1,label=$\mathcal{P}_{3}$,position=-90]{6}
				\Vertex[x=1.7,y=-1]{7}
				\Vertex[x=0.7,y=1]{8}
				\Vertex[x=1.4,y=1]{9}
				\Vertex[x=3]{10}
				\Vertex[x=2.5,y=-1]{11}
				
				\Edge[](1)(2)
				\Edge[label=$\mathcal{P}_{2}$,position=above](2)(3)
				\Edge[](3)(4)
				\Edge[](1)(5)
				\Edge[](5)(6)
				\Edge[](6)(7)
				\Edge[](7)(4)
				\Edge[](1)(8)
				\Edge[label=$\mathcal{P}_{1}$,position=above](8)(9)
				\Edge[](9)(4)
				\Edge(4)(10)
				\Edge(7)(11)
			\end{tikzpicture}
		\end{center}\caption{Paths described in Remark~\ref{paths}.}\label{pdiag}
	\end{figure}
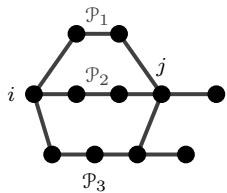
	
	From Remark~\ref{paths}, it follows that if one of the paths, say $\mathcal{P}_1$, is even (resp. odd) while the other two are odd (resp. even), then $\Gamma_1$ and $\Gamma_2$ are odd, whereas $\Gamma_3$ is even. Similarly, if all three paths are even (or all are odd), then all cycles in $B$ are even. In particular, $B$ contains either exactly one even cycle or all its cycles are even.
	
	Furthermore, note that $w(\Gamma_1) = \pm w(\mathcal{P}_1)w(\mathcal{P}_2)$, $w(\Gamma_2) = \pm w(\mathcal{P}_1)w(\mathcal{P}_3)$, and $w(\Gamma_3) = \pm w(\mathcal{P}_2)w(\mathcal{P}_3)$. Therefore, the weights of all three cycles are either $\pm 1$, or exactly one of them has weight $\pm 1$. Henceforth, exactly one cycle in a digraph $B\in\B$ has weight $\pm1$.
	
	\begin{rem}\label{det}
		Let $B \in \B$ be a digraph having $n$ vertices. Since a spanning elementary subgraph of $B$ contains at most two cycles and the weight of at least one of which is $\pm \i$, it follows that $m_{k_{i_1 \dots}} = 0$ for $k \geq 2$ in Remark~\ref{detn}. Therefore, the expression in Remark~\ref{detn} reduces to
		$$\det A(B)=m_0\times(-1)^{\frac{n}{2}}-2\sum_{i}m_{1_i}\mathfrak{R}(w(\Gamma_i))\times(-1)^{\frac{n+|\Gamma_i|}{2}},$$ where $\mathfrak{R}(w(\Gamma_i))$ is the real part of $w(\Gamma_i)$.
	\end{rem}

	\begin{lem}\label{noper}
		Let $B\in\B$ be a digraph without a perfect matching. Then $B$ is non-singular if and only if $B$ has an independent cycle of weight $\pm1$.
	\end{lem}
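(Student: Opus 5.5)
Since $B$ has no perfect matching, $m_0=0$, and Remark~\ref{det} reduces the determinant to
$$\det A(B)=-2\sum_{i}m_{1_i}\,\mathfrak{R}(w(\Gamma_i))\,(-1)^{\frac{n+|\Gamma_i|}{2}}.$$
Only cycles with $w(\Gamma_i)\neq\pm\i$ contribute, and as noted before Remark~\ref{det}, exactly one cycle of $B$ has weight $\pm1$; call it $\Gamma$. So the sum has at most one nonzero term:
$$\det A(B)=-2\,m_{1}\,w(\Gamma)\,(-1)^{\frac{n+|\Gamma|}{2}},$$
where $m_1$ is the number of perfect matchings of $B-\Gamma$. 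The whole proof rests on this collapse to a single term.

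For the ``if'' direction, assume $B$ has an independent cycle of weight $\pm1$. By uniqueness, this cycle is $\Gamma$, and independence means $B-\Gamma$ has a perfect matching, so $m_1\ge 1$. The displayed formula then gives $|\det A(B)|=2m_1\neq 0$, and $B$ is non-singular.

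For the ``only if'' direction, assume $\det A(B)\neq 0$. If $B$ had no cycle of weight $\pm1$, the sum would be empty and $\det A(B)=0$. Hence $\Gamma$ exists, and $\det A(B)\neq0$ forces $m_1\ge1$. Thus $B-\Gamma$ has a perfect matching, i.e.\ $\Gamma$ is an independent cycle of weight $\pm1$.

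The main point needing care is justifying that at most one cycle has weight $\pm1$ in both the $\infty$-type and $\theta$-type cases. For $\theta$-type, this follows from the path-weight identities stated before Remark~\ref{det}, together with the standing assumption that some cycle has weight $\pm\i$. For $\infty$-type, $B$ has only two cycles, and the standing assumption makes at least one of them weight $\pm\i$. Remark~\ref{rem1} gives a consistency check: since $B$ has no perfect matching, any independent cycle must be odd, which is compatible with the formula above.
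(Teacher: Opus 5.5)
Your proof is correct and takes essentially the same route as the paper: with $m_0=0$, Remark~\ref{det} collapses to the single term $-2m\,w(\Gamma)(-1)^{\frac{n+|\Gamma|}{2}}$ for the one cycle $\Gamma$ of weight $\pm1$, and both directions follow from that. Your converse, read off the same formula, covers both the case with no $\pm1$ cycle and the case of a non-independent $\pm1$ cycle, so it is slightly more complete than the paper's one-line appeal to Corollary~\ref{npm}, which literally handles only the case where every cycle has weight $\pm\i$.
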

	\begin{proof}
		First, assume that $B$ has an independent cycle $\Gamma$ with $w(\Gamma) = \pm 1$. By Remark~\ref{rem1} $\Gamma$ is an odd cycle. Thus, $B$ contains a contributing spanning elementary subgraph that has $\Gamma$ as a component. Furthermore, as $B$ does not possess a perfect matching and the weights of all the other cycles are $\pm \i$, $B$ does not possess any other contributing spanning elementary subgraph. Hence, by Remark~\ref{det}, $\det A(B) = -2w(\Gamma) \times m(-1)^{\frac{n + |\Gamma|}{2}}$, where $m$ is the number of perfect matchings of $B - \Gamma$.
		
		The converse follows from Corollary~\ref{npm} as $m_0=0$.
	\end{proof}

	Next, we investigate the non-singularity of digraphs in $\B$ that have more than one perfect matchings.
	
	\begin{lem}\label{123}
		Let $B\in\B$ be such that $B$ has a perfect matching. If $B$ has exactly one independent cycle, then $B$ has exactly three spanning elementary subgraphs: two perfect matchings and a subgraph containing $\Gamma$.
	\end{lem}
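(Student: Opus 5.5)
The plan is to sort spanning elementary subgraphs $H$ of $B$ by the number of cycle components (zero, one or two, since $B$ is bicyclic) and to show that the hypothesis ``exactly one independent cycle $\Gamma$'' pins down each family. First, since $B$ has a perfect matching and an independent cycle, Proposition~\ref{mperf} gives more than one perfect matching. Remark~\ref{rem1} then shows that $\Gamma$ is even. I will use one observation repeatedly: if $C$ is an even cycle and $B-C$ has a perfect matching, then $C$ is independent, so $C=\Gamma$.

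\emph{Perfect matchings.} Let $\M\neq\M'$ be perfect matchings. Every component of $\M\Delta\M'$ is an even alternating cycle $C$. Restricting $\M$ to the vertices outside $C$ gives a perfect matching of $B-C$, because $\M$ covers $V(C)$ by edges of $C$. Hence $C$ is independent, so $C=\Gamma$, and therefore $\M\Delta\M'=\Gamma$. Thus $\M'=\M\Delta\Gamma$ is determined by $\M$. If there were three perfect matchings $\M_1,\M_2,\M_3$, we would get $\M_2=\M_1\Delta\Gamma=\M_3$, a contradiction. So there are exactly two perfect matchings.

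\emph{Subgraphs with one cycle.} If $H$ has a single cycle $C$, then the edges of $H$ outside $C$ form a perfect matching of $B-C$. So $C$ is independent and $C=\Gamma$. It remains to show that $B-\Gamma$ has a unique perfect matching. Suppose it had two. Proposition~\ref{mperf} applied to $B-\Gamma$ gives an even cycle $C'$, vertex-disjoint from $\Gamma$, such that $B-\Gamma-C'$ has a perfect matching. Adding a perfect matching of the even cycle $\Gamma$ yields a perfect matching of $B-C'$. So $C'$ is an independent cycle of $B$ different from $\Gamma$, a contradiction. Hence exactly one spanning elementary subgraph contains $\Gamma$ as its only cycle.

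\emph{Subgraphs with two cycles.} These can only occur when $B\in\Bi$ and $H$ contains both cycles $\Gamma_1,\Gamma_2$ of $B$, which must be vertex-disjoint. Suppose $\Gamma_1$ is even. Combining a perfect matching of $\Gamma_1$ with the matching edges of $H$ gives a perfect matching of $B-\Gamma_2$. So $\Gamma_2$ is independent, hence $\Gamma_2=\Gamma$ and $\Gamma_2$ is even. By symmetry, $\Gamma_1$ is then also independent, giving two distinct independent cycles, which is impossible. If both $\Gamma_1$ and $\Gamma_2$ are odd, then every cycle of $B$ is odd, contradicting that $\Gamma$ is an even cycle of $B$. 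So no such $H$ exists.

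Together these three cases give exactly three spanning elementary subgraphs. The step needing most care is the two-cycle case, where one must use that an $\infty$-type graph has only two cycles, so that $\Gamma\in\{\Gamma_1,\Gamma_2\}$. The uniqueness of the perfect matching of $B-\Gamma$ also needs care, in particular checking that the new cycle $C'$ really lifts to an independent cycle of $B$.
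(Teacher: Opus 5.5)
Your proof is correct. Its one-cycle step is the paper's argument: an independent cycle of $B-\Gamma$, combined with a perfect matching of the even cycle $\Gamma$, gives a second independent cycle of $B$, so $B-\Gamma$ has a unique perfect matching $\M_0$.

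The paper then exhibits $\M_0\cup M_1$, $\M_0\cup M_2$ and $\M_0\cup\Gamma$ and stops. It never shows that $B$ has no other perfect matchings, for instance ones using a peg on $\Gamma$. It also never rules out spanning elementary subgraphs with two cycles. Your additions supply exactly these upper bounds:
\begin{itemize}
\item In your symmetric-difference argument, every component of $\M\Delta\M'$ is an independent even cycle, hence equals $\Gamma$. So each perfect matching has the single partner $\M\Delta E(\Gamma)$.
\item Your parity and independence argument excludes two-cycle subgraphs in $\Bi$.
\end{itemize}
So your version actually justifies the word ``exactly''. This matters beyond the statement itself, since Lemma~\ref{1ind} uses the count $m_0=2$. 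The two-cycle exclusion, by contrast, is irrelevant for the determinant, because such subgraphs are never contributing.

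One small ordering fix is needed. The converse of Proposition~\ref{mperf} requires $\Gamma$ to be even. So first get evenness by parity, since $|V(B)|$ and $|V(B-\Gamma)|$ are both even. Only then conclude that $B$ has more than one perfect matching, rather than citing Proposition~\ref{mperf} before Remark~\ref{rem1}.
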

	\begin{proof}
		Let $\Gamma$ be the independent cycle in $B$. By Remark~\ref{rem1}, $\Gamma$ is even and has two perfect matchings, $M_1$ and $M_2$. Consequently, $B - \Gamma$ must have exactly one perfect matching, $\M_0$. Suppose, to the contrary, that this is not the case. Then, by Proposition~\ref{mperf}, $B - \Gamma$ would contain an independent cycle, say $\Gamma'$. Let $\M_1$ be a perfect matching of $B - (\Gamma \cup \Gamma')$. The union of $\M_1$ with a perfect matching of $\Gamma$ would yield a perfect matching of $B - \Gamma'$, contradicting the assumption that $B$ has only one independent cycle.
		
		Clearly, $\M_0 \cup M_1$ and $\M_0 \cup M_2$ are two distinct perfect matchings of $B$. Additionally, $\M_0 \cup \Gamma$ is a spanning elementary subgraph in $B$.
	\end{proof}

	\begin{lem}\label{1ind}
		Let $B\in\B$ such that $B$ has a perfect matching. If $B$ has exactly one independent cycle $\Gamma$, then $\det A(B)$ is given by $$\det A(B)=\left\{\begin{array}{ll}
			2(-1)^{\frac{n}{2}}\left[1-w(\Gamma)(-1)^{\frac{|\Gamma|}{2}}\right] & \text{ if } w(\Gamma)=\pm1\\
			2(-1)^{\frac{n}{2}} & \text{ if } w(\Gamma)=\pm\i.
		\end{array}\right.$$
	\end{lem}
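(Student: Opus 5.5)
The plan is to combine Lemma~\ref{123} with the determinant formula of Remark~\ref{det}, or equivalently Lemma~\ref{adjdet}. By Lemma~\ref{123}, $B$ has exactly three spanning elementary subgraphs: two perfect matchings $\M_0\cup M_1$ and $\M_0\cup M_2$, and the subgraph $\M_0\cup\Gamma$. Here $\M_0$ is the unique perfect matching of $B-\Gamma$. So $m_0=2$. Every other cycle of $B$ fails to be independent, so it gives $m_{1_i}=0$. For $\Gamma$ itself, $m_{1}=1$ provided $w(\Gamma)\neq\pm\i$. By Remark~\ref{det}, all terms with two or more cycles vanish.

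In the case $w(\Gamma)=\pm\i$, the subgraph $\M_0\cup\Gamma$ is not contributing. Only the two perfect matchings remain, so $\det A(B)=2(-1)^{n/2}$, as in Corollary~\ref{npm} with $m_0=2$.

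In the case $w(\Gamma)=\pm1$, Remark~\ref{det} gives
$$\det A(B)=2(-1)^{\frac n2}-2\,w(\Gamma)(-1)^{\frac{n+|\Gamma|}{2}}.$$
Since $\Gamma$ is even by Remark~\ref{rem1} and $n$ is even because $B$ has a perfect matching, we can factor $(-1)^{\frac{n+|\Gamma|}{2}}=(-1)^{\frac n2}(-1)^{\frac{|\Gamma|}{2}}$. This gives $2(-1)^{n/2}[1-w(\Gamma)(-1)^{|\Gamma|/2}]$.

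The main point needing care is the sign bookkeeping in Remark~\ref{detn}, namely that the contribution of $\M_0\cup\Gamma$ is really $-2w(\Gamma)(-1)^{(n+|\Gamma|)/2}$. To check it directly from Lemma~\ref{adjdet}, first count components. The subgraph has $(n-|\Gamma|)/2$ edge components, all singular, plus the cycle $\Gamma$. The cycle is singular exactly when $w(\Gamma)=1$, and the cycle count is $|C_H|=1$.

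This gives the sign $(-1)^{n-(n-|\Gamma|)/2-S}\cdot 2$, where $S=1$ if $w(\Gamma)=1$ and $S=0$ if $w(\Gamma)=-1$. Since $n-(n-|\Gamma|)/2=(n+|\Gamma|)/2$ and $(-1)^{-S}=-w(\Gamma)$, the contribution matches. Each perfect matching contributes $(-1)^{n-n/2}=(-1)^{n/2}$.
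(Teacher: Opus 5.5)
Your proposal is correct and is essentially the paper's own proof, which simply combines the count of spanning elementary subgraphs from Lemma~\ref{123} with the formula of Remark~\ref{det}; your derivation of the sign $-2w(\Gamma)(-1)^{\frac{n+|\Gamma|}{2}}$ directly from Lemma~\ref{adjdet} spells out the bookkeeping the paper leaves implicit. One small point: when $w(\Gamma)=\pm\i$, citing Corollary~\ref{npm} is only an analogy, since another cycle of $B$ may have weight $\pm1$, but your direct observation that only the two perfect matchings are contributing already suffices.
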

	\begin{proof}
		The result then follows from Remark~\ref{det}.
	\end{proof}
	
	In the following, we examine the non-singularity of digraphs in $\B$ that contain a perfect matching and possess more than one independent cycle. Due to the structural differences between $\infty$-type and $\theta$-type digraphs, we treat these cases separately.
	
	\subsubsection{$\infty-$Type bicyclic digraph}
	
	In this section, we identify non-singular digraph in $\B(\infty)$.
	
	\begin{lem}\label{lem2}
		Let $B \in \B(\infty)$ be a digraph that has a perfect matching. If $V(\Gamma_1) \cap V(\Gamma_2)$ is non-empty, then $B$ contains at most one independent cycle.
	\end{lem}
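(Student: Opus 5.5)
In an $\infty$-type digraph the only cycles are $\Gamma_1$ and $\Gamma_2$. So it suffices to show that $\Gamma_1$ and $\Gamma_2$ cannot both be independent when they share a vertex. Since they are edge-disjoint and share a vertex, they share exactly one vertex, say $v$. I will argue by contradiction and assume both are independent.

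Because $B$ has a perfect matching and has more than one independent cycle, the plan is to use Remark~\ref{rem1}. If $B$ had a unique perfect matching it would have no independent cycle. Hence $B$ has more than one perfect matching, and both $\Gamma_1$ and $\Gamma_2$ are even.

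Now let $\M_1$ be a perfect matching of $B-\Gamma_1$ and $\M_2$ one of $B-\Gamma_2$. The key step is a parity count on a component of $B-\Gamma_1$ (or of $B - (\Gamma_1\cup\Gamma_2)$). Consider $\Gamma_2 - v$, which is a path on $|\Gamma_2|-1$ vertices, an odd number. Let $C$ be the component of $B-\Gamma_1$ containing $\Gamma_2 - v$; it consists of this path together with the trees attached to its vertices, excluding anything hanging off $v$. Since $B$ is bicyclic of $\infty$-type with the two cycles meeting at $v$, every other part of $B$ is a tree attached at a single vertex. Hence $C$ is a tree, and it is attached to $\Gamma_1$ only through the two edges joining $v$ to $\Gamma_2-v$.

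$\M_1$ restricts to a perfect matching of $C$, so $|C|$ is even, and the trees $T_u$ hanging at the vertices $u$ of $\Gamma_2-v$ have total size $|C|-(|\Gamma_2|-1)$, which is odd. Symmetrically, a perfect matching $\M_2$ of $B-\Gamma_2$ restricts to perfect matchings of each pendant tree $T_u$ ($u\in\Gamma_2$, $T_u$ excluding $u$), since these are components of $B-\Gamma_2$. So each $|T_u|$ is even, and their sum over $u\neq v$ is even, which is a contradiction.

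The main obstacle is justifying the component structure cleanly: that the vertices of $B$ outside $\Gamma_1\cup\Gamma_2$ split into trees each attached to exactly one cycle vertex, so that the components of $B-\Gamma_2$ off $v$ are precisely the pendant trees $T_u$. This follows because adding any extra connection would create a third cycle, contradicting $|E|=|V|+1$. One also needs the separate treatment of the component of $B-\Gamma_2$ containing $\Gamma_1-v$ together with the trees at $v$, but that component does not enter the parity argument above.
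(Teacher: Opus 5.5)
Your proof is correct, but it takes a different route from the paper. Both arguments start from the fact that $\Gamma_2-v$ has an odd number of vertices. The paper uses this only to find a vertex $m'$ of $\Gamma_2$ that a perfect matching $\mathcal{M}$ of $B-\Gamma_1$ sends outside $\Gamma_1\cup\Gamma_2$. It then follows the path that alternates between $\mathcal{M}$ and a perfect matching $\mathcal{M}'$ of $B-\Gamma_2$, and argues that this path can never terminate, since ending on a cycle vertex would close a third cycle; this contradicts finiteness. You never manipulate the matchings themselves. Instead you first fix the global shape of $B$ by the edge count: the cycles meet in exactly one vertex, and everything else consists of trees, each joined to $\Gamma_1\cup\Gamma_2$ by a single edge. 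After that the contradiction is a pure parity clash. Independence of $\Gamma_1$ forces the pendant trees on $\Gamma_2-v$ to have odd total order. Independence of $\Gamma_2$ makes each of them a union of components of $B-\Gamma_2$, hence of even order. Your version is shorter and uses only the existence of the two matchings and the parities of components. It avoids the bookkeeping the alternating walk needs, namely that each step reaches a new vertex and that returns to the cycles are excluded. The price is that the pendant-tree decomposition must be justified, which you do correctly. The paper's argument is more local: it needs only the absence of a third cycle at each step, and it is in the same alternating-path style as Proposition~\ref{mperf} and Lemma~\ref{span}. A minor wording point: $T_u$ may be a forest if several trees hang at $u$, but your parity count is unaffected.
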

	\begin{proof}
		If $B$ has a unique perfect matching, then by Remark~\ref{rem1}, $B$ does not contain any independent cycle. Therefore, assume that $B$ has more than one perfect matching. By Proposition~\ref{mperf}, $B$ contains an independent cycle $\Gamma$.
		
		For contradiction, suppose that $B$ also contains another independent cycle $\Gamma'$. By Remark~\ref{rem1}, $\Gamma'$ must be even. Let $\Gamma = [1,2,\dots,k]$ and $\Gamma' = [1,2',\dots,l']$, where both $k$ and $l$ are even, and $1 \in V(\Gamma) \cap V(\Gamma')$. Note that both $B - \Gamma$ and $B - \Gamma'$ are trees, and by Proposition~\ref{mperf}, each of these trees has a unique perfect matching, which we denote by $\M$ and $\M'$, respectively.
		
		Since $V(\Gamma') - {1}$ contains an odd number of vertices, there must exist a vertex $m' \in V(\Gamma')$ such that $[m', n'] \in \M$ with $n' \notin V(\Gamma) \cup V(\Gamma')$. Moreover, because $B - \Gamma'$ has a perfect matching, there exists a vertex $n_1$ such that $[n', n_1] \in \M'$. Observe that $n_1 \notin V(\Gamma) \cup V(\Gamma')$; otherwise, we could construct a cycle $[1 \leadsto m', n', n_1 \leadsto 1]$ in $B$, which is distinct from both $\Gamma$ and $\Gamma'$.
		
		Since $n'$ is already matched to $m'$ in $\M$, there exists a vertex $n_2 \notin V(\Gamma) \cup V(\Gamma')$ such that $[n_1, n_2] \in \M$. By continuing this argument alternately with $\M$ and $\M'$, we generate an infinite path $[m', n', n_1, n_2, \dots]$ in $B$, which is impossible. Hence, $\Gamma'$ cannot be independent in $B$.
	\end{proof}

	\begin{lem}\label{9span}
		Let $B\in\B(\infty)$ be a digraph that has a perfect matching. Let $\Gamma$ and $\Gamma'$ be cycles in $B$. If both $\Gamma$ and $\Gamma'$ are independent in $B$, then $B$ has nine spanning elementary subgraphs: four perfect matchings, two spanning elementary subgraphs containing only $\Gamma$ as cycle component, two containing only $\Gamma'$, and one containing both $\Gamma$ and $\Gamma'$. 
	\end{lem}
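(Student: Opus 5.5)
Since $\Gamma$ and $\Gamma'$ are both independent, Lemma~\ref{lem2} forces $V(\Gamma)\cap V(\Gamma')=\emptyset$. In an $\infty$-type graph these are the only two cycles, so $B$ consists of $\Gamma$ and $\Gamma'$ joined by a path $P$ (meeting $\Gamma$ at $u$ and $\Gamma'$ at $u'$) with trees hanging off. The plan is first to show that both cycles are even. By Remark~\ref{rem1}, every independent cycle in a graph with more than one perfect matching is even, and $B$ has more than one perfect matching by Proposition~\ref{mperf}.

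The central step is to show that $B-(\Gamma\cup\Gamma')$ has exactly one perfect matching $\M_0$. To get existence, take the unique perfect matching $\M$ of the tree $B-\Gamma$ and look at how it covers $\Gamma'$. I claim $\M$ restricted to $V(\Gamma')$ is a perfect matching of $\Gamma'$. If it is not, some vertex $m'$ of $\Gamma'$ is matched in $\M$ to an outside vertex $n'$. I would then run the alternating argument of Lemma~\ref{lem2}, alternating $\M$ with the perfect matching $\M'$ of $B-\Gamma'$. Because $|\Gamma'|$ is even, I expect a parity count to show that an odd number of vertices of $\Gamma'$ are matched outward, so this alternating walk must start. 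It can never return to $\Gamma\cup\Gamma'$ without creating a third cycle, so it would have to be infinite, which is a contradiction. Hence $\M\setminus E(\Gamma')$ is a perfect matching of $B-(\Gamma\cup\Gamma')$.

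I expect making this parity and alternation argument airtight to be the main obstacle. The delicate part is the vertex $u'$, where $P$ attaches, and the possibility that the walk travels along $P$ toward $\Gamma$. I would handle this by noting that $\M$ avoids $V(\Gamma)$ entirely, while $\M'$ may cover it. If the walk enters $\Gamma$ through an $\M'$-edge, the next step needs an $\M$-edge at a vertex of $\Gamma$, which does not exist. So the walk would have to stop at a vertex of $\Gamma$ that $\M'$ uses, and I would rule this out by the symmetric argument with the roles of $\Gamma$ and $\Gamma'$ exchanged. For uniqueness, $B-(\Gamma\cup\Gamma')$ is a forest, and a forest has at most one perfect matching (Proposition~\ref{mperf}, since a forest has no cycles).

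Once we know $\M_0$ is unique, the enumeration is direct. Every spanning elementary subgraph uses some subset of the two vertex-disjoint cycles as cycle components. A component that is an even cycle may be replaced by either of its two perfect matchings. I would then verify that each subgraph is determined by choosing, for each of $\Gamma$ and $\Gamma'$, one of three states: "cycle", "matching $M_1$", or "matching $M_2$", together with $\M_0$ on the rest. The point to check is that no other perfect matching of $B$ exists, since any perfect matching either restricts to perfect matchings on both cycles or produces a contradiction via the same alternation argument. This gives $3\times 3=9$ subgraphs: $2\cdot2=4$ perfect matchings, $2$ containing only $\Gamma$, $2$ containing only $\Gamma'$, and $1$ containing both.
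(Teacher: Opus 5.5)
\textbf{There is a genuine gap in the step you yourself mark as central.} The rest of your plan is sound: disjointness via Lemma~\ref{lem2}, evenness, a unique perfect matching $\M_0$ of the forest $B-(\Gamma\cup\Gamma')$, then $3\times 3$ choices. It also aims at more than the paper's proof does, since the paper simply asserts that $\M_1\cap\M'_1$ is a perfect matching of $B-(\Gamma\cup\Gamma')$ and lists nine subgraphs without ruling out others. The problems in the central step are these.
(a) $B-\Gamma$ is not a tree and has no unique perfect matching. Because $\Gamma'$ is disjoint from $\Gamma$, it survives in $B-\Gamma$; the tree claim in Lemma~\ref{lem2} depended on the cycles sharing a vertex. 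You must run the argument for an arbitrary perfect matching $\M$ of $B-\Gamma$, which you need anyway to count exactly two subgraphs containing only $\Gamma$.
(b) Your parity is backwards. $\M$ covers all of $V(\Gamma')$, edges inside $\Gamma'$ cover its vertices in pairs, and $|\Gamma'|$ is even. So an \emph{even} number of vertices of $\Gamma'$ are matched outward. The odd count in Lemma~\ref{lem2} came from deleting the shared vertex.
(c) The delicate case is not closed. Suppose $m'=u'$ and the alternating walk runs down $P$ and stops at $u\in\Gamma$ on an $\M'$-edge. Then no new cycle appears and the walk is finite. Your ``symmetric argument'' from $u$ just retraces $P$ back to $u'$, so it gives no contradiction either.

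\textbf{The fix is the correct parity.} You do not need parity to start the walk, since your contradiction hypothesis does that. You need it to produce a second outward-matched vertex $m''\neq u'$. The walk from $m''$ is a path component of $\M\cup\M'$ whose interior lies outside $\Gamma\cup\Gamma'$, so it ends in $\Gamma\cup\Gamma'$.
It cannot end in $\Gamma$: the only $\Gamma'\leadsto\Gamma$ path with interior outside both cycles is $P$, which starts at $u'$. If it ends in $\Gamma'$, it closes a third cycle.

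A cleaner route avoids walks altogether. Let $M_1$ and $M'_1$ be perfect matchings of $\Gamma$ and $\Gamma'$. Then $\M\cup M_1$ and $\M'\cup M'_1$ are perfect matchings of $B$, so their symmetric difference is a disjoint union of cycles of $B$ and lies in $E(\Gamma)\cup E(\Gamma')$. Since $\M'\cup M'_1$ matches every vertex of $\Gamma'$ inside $\Gamma'$, so does $\M$, and $\M\setminus E(\Gamma')$ is your $\M_0$.

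The same comparison makes your vague exhaustiveness step precise. Compare $\M_0\cup M_1\cup M'_1$ with an arbitrary perfect matching of $B$, of $B-\Gamma$ extended by $M_1$, or of $B-\Gamma'$ extended by $M'_1$. Each is then forced to have the form $\M_0$ plus perfect matchings of the relevant cycles. Together with uniqueness of $\M_0$ in the forest, this gives exactly nine spanning elementary subgraphs.
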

	\begin{proof}
		Since both $\Gamma$ and $\Gamma'$ are independent, they are vertex-disjoint by Lemma~\ref{lem2} and even by Remark~\ref{rem1}. Let $M_1, M_2$ be the perfect matchings of $\Gamma$, and $M'_1, M'_2$ those of $\Gamma'$.
		
		Since the cycles are vertex-disjoint, $\Gamma$ is independent in $B - \Gamma'$ and $\Gamma'$ is independent in $B - \Gamma$. Hence, by Proposition~\ref{mperf}, there exist perfect matchings $\M_1, \M_2$ of $B - \Gamma$ and $\M'_1, \M'_2$ of $B - \Gamma'$. Consequently, $\M = \M_1 \cap \M'_1$ is a perfect matching of $B - (\Gamma \cup \Gamma')$.
		
		It follows that $\M \cup M_i \cup M'_j$ $(i,j \in \{1,2\})$ are four distinct perfect matchings of $B$. Moreover, $\M_1 \cup \Gamma$, $\M_2 \cup \Gamma$, $\M'_1 \cup \Gamma'$, $\M'_2 \cup \Gamma'$, and $\M \cup \Gamma \cup \Gamma'$ are all spanning elementary subgraphs of $B$.
	\end{proof}
	
	\begin{lem}\label{2ind}
		Let $B\in\B(\infty)$ be a digraph that has a perfect matching. Let $\Gamma$ and $\Gamma'$ be the cycles in $B$. If both $\Gamma$ and $\Gamma'$ are independent, then $$\det A(B)=\left\{\begin{array}{ll}
			4\times (-1)^{\frac{n}{2}}\left[1-w(\Gamma)(-1)^\frac{|\Gamma|}{2}\right] & \text{ if } w(\Gamma)=\pm 1 \text{ and }w(\Gamma')=\pm\i\\
			4\times (-1)^\frac{n}{2} & \text{ if } w(\Gamma),w(\Gamma')\in\{\pm\i\}.
		\end{array}\right.$$
	\end{lem}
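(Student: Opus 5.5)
The plan is to combine Lemma~\ref{9span} with the determinant formula of Remark~\ref{det}. By Lemma~\ref{lem2}, $\Gamma$ and $\Gamma'$ are vertex-disjoint, and by Remark~\ref{rem1} both are even. In an $\infty$-type digraph the only cycles are $\Gamma$ and $\Gamma'$. Lemma~\ref{9span} then lists all spanning elementary subgraphs: four perfect matchings, two with $\Gamma$ as the only cycle component, two with $\Gamma'$ as the only cycle component, and one containing both. In the notation of Remark~\ref{detn} this gives
\[
m_0=4,\qquad m_{1_\Gamma}=2,\qquad m_{1_{\Gamma'}}=2 .
\]
The count $m_{1_\Gamma}=2$ is justified because $B-\Gamma$ is a unicyclic graph containing the independent even cycle $\Gamma'$, so Lemma~\ref{9span} applies to it. The single subgraph containing both cycles does not contribute: as noted in Remark~\ref{det}, at least one of the two cycles has weight $\pm\i$.

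Since the two cycles are edge-disjoint, the discussion after Remark~\ref{paths} shows that at most one of them has weight $\pm1$. (The paper's standing assumption excludes the case where both weights are real.) This leaves two cases.

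\emph{Case $w(\Gamma),w(\Gamma')\in\{\pm\i\}$.} Both real parts vanish, so Remark~\ref{det} gives $\det A(B)=4(-1)^{n/2}$ directly.

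\emph{Case $w(\Gamma)=\pm1$, $w(\Gamma')=\pm\i$.} Only the $\Gamma$ term survives in the sum of Remark~\ref{det}, giving
\[
\det A(B)=4(-1)^{\frac n2}-4\,w(\Gamma)(-1)^{\frac{n+|\Gamma|}{2}}=4(-1)^{\frac n2}\Bigl[1-w(\Gamma)(-1)^{\frac{|\Gamma|}{2}}\Bigr].
\]
Here we use that $n$ and $|\Gamma|$ are both even, so $(-1)^{\frac{n+|\Gamma|}{2}}=(-1)^{\frac n2}(-1)^{\frac{|\Gamma|}{2}}$.

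The main point needing care is the exact multiplicity $m_{1_\Gamma}=2$, that is, that $B-\Gamma$ has exactly two perfect matchings and not more. Lemma~\ref{9span} asserts the total of nine subgraphs, and I will rely on it. If an independent check is wanted, note that $B-(\Gamma\cup\Gamma')$ is a forest, so it has at most one perfect matching. Every perfect matching of $B-\Gamma$ either contains a perfect matching of $\Gamma'$ or uses an edge leaving $\Gamma'$. In the second case, the argument of Lemma~\ref{123} applied to $B-\Gamma$ shows it has exactly two perfect matchings.
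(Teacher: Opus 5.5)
Your proposal is correct and follows the paper's own argument: the paper's proof simply cites Lemma~\ref{9span} and Remark~\ref{det}, and you carry out the substitution $m_0=4$, $m_{1_\Gamma}=m_{1_{\Gamma'}}=2$ explicitly. Two side remarks are inaccurate but harmless: the discussion after Remark~\ref{paths} concerns only $\theta$-type graphs (the two edge-disjoint cycles of an $\infty$-type graph have independent weights, and the lemma assumes the weight case anyway), and $m_{1_\Gamma}=2$ comes from Lemma~\ref{9span} applied to $B$ itself, not to the unicyclic graph $B-\Gamma$.
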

	\begin{proof}		
		The $\det A(B)$ follows from Lemma~\ref{9span} and Remark~\ref{det}.
	\end{proof}

	The following theorem is the major result of this subsection which provides complete characterization of non-singular digraphs in $\B(\infty)$.
	
	\begin{thm}\label{thmi}
		Let $B \in \Bi$ be a digraph, and let $\Gamma$ and $\Gamma'$ be cycles in $B$ such that $w(\Gamma') = \pm \i$. Then $B$ is non-singular if and only if one of the following conditions holds:
		\begin{enumerate}[1.]
			\item $B$ has a unique perfect matching.
			\item $B$ has more than one perfect matching, and either $\Gamma$ is not independent and $\Gamma'$ is independent, or $\Gamma$ is an independent cycle in $B$ satisfying one of the following:
			\begin{enumerate}[i)]
				\item $w(\Gamma)=\pm\i$.
				\item $w(\Gamma)=1$ and $|\Gamma|=4k+2$.
				\item $w(\Gamma)=-1$ and $|\Gamma|=4k$.
			\end{enumerate}
			\item $B$ has no perfect matching and contains an odd independent cycle $\Gamma$ with $w(\Gamma)=\pm1$.
		\end{enumerate}
	\end{thm}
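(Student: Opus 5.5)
The plan is a case analysis on the number of perfect matchings $m_0$ of $B$. In each case I compute $\det A(B)$ from the lemmas already proved and read off exactly when it vanishes. Throughout, $w(\Gamma')=\pm\i$, so by the discussion after Remark~\ref{paths} only $\Gamma$ can have weight $\pm1$. Remark~\ref{det} then says the only contributing spanning elementary subgraphs are the perfect matchings of $B$ together with those containing $\Gamma$ when $w(\Gamma)=\pm1$.

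\emph{Case $m_0=0$.} Lemma~\ref{noper} gives directly that $B$ is non-singular if and only if it has an independent cycle of weight $\pm1$. That cycle must be $\Gamma$, and it is odd by Remark~\ref{rem1}. This is condition 3.

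\emph{Case $m_0=1$.} Corollary~\ref{upm} gives non-singularity, which is condition 1. It remains to check that a graph with a unique perfect matching satisfies neither condition 2 nor condition 3, so that the list is consistent. This is immediate, since conditions 2 and 3 presuppose $m_0>1$ and $m_0=0$ respectively.

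\emph{Case $m_0>1$.} By Proposition~\ref{mperf} there is at least one independent cycle, and I split by which cycles are independent.
\begin{enumerate}[(a)]
\item Only $\Gamma'$ is independent. Lemma~\ref{1ind} with $w(\Gamma')=\pm\i$ gives $\det A(B)=2(-1)^{n/2}\neq0$, so $B$ is non-singular. This is the first alternative of condition 2.
\item Only $\Gamma$ is independent. Lemma~\ref{1ind} gives $\det A(B)=2(-1)^{n/2}$ if $w(\Gamma)=\pm\i$. Otherwise it gives $2(-1)^{n/2}[1-w(\Gamma)(-1)^{|\Gamma|/2}]$, where $|\Gamma|$ is even by Remark~\ref{rem1}.
\item Both are independent. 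Lemma~\ref{2ind} gives the same dichotomy with the factor $4$ in place of $2$.
\end{enumerate}
Note that Lemma~\ref{lem2} is what makes case (c) coherent: it forces the two cycles to be vertex-disjoint.

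In cases (b) and (c), when $w(\Gamma)=\pm1$, the determinant vanishes exactly when $w(\Gamma)(-1)^{|\Gamma|/2}=1$. With $|\Gamma|$ even this means $w(\Gamma)=1$ and $|\Gamma|\equiv0\pmod 4$, or $w(\Gamma)=-1$ and $|\Gamma|\equiv2\pmod4$. So non-singularity holds precisely in subcases ii) and iii), and subcase i) is always non-singular. Hence, when $\Gamma$ is independent, non-singularity is governed only by $\Gamma$, whether or not $\Gamma'$ is also independent. This matches the phrasing of condition 2.

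The main obstacle is confirming that these cases are exhaustive and that condition 2 is stated correctly when both cycles are independent. The intended reading of condition 2 is "$\Gamma$ independent with i)--iii)", regardless of $\Gamma'$. I would verify that Lemma~\ref{2ind} and Lemma~\ref{1ind} give the same zero set in the variable $w(\Gamma)(-1)^{|\Gamma|/2}$. Finally, for the "only if" direction I would assemble the complements: a singular $B$ with $m_0>1$ must have $\Gamma$ independent of weight $\pm1$ violating ii) and iii), and a singular $B$ with $m_0=0$ has no independent cycle of weight $\pm1$.
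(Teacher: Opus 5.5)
Your proposal is correct and follows essentially the same route as the paper, whose proof simply cites Corollary~\ref{upm}, Lemma~\ref{noper}, Lemma~\ref{1ind} and Lemma~\ref{2ind}. Your case split on the number of perfect matchings is the bookkeeping the paper leaves implicit: Proposition~\ref{mperf} guarantees an independent cycle when $m_0>1$, Lemma~\ref{lem2} gives vertex-disjointness when both cycles are independent, and the check that $1-w(\Gamma)(-1)^{|\Gamma|/2}\neq 0$ holds exactly in subcases ii) and iii) is right.
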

	\begin{proof}
		The theorem follows from Corollary~\ref{upm}, Lemma~\ref{noper}, Lemma~\ref{1ind}, and Lemma~\ref{2ind}.
	\end{proof}
	
	\subsubsection{$\theta-$ Type bicyclic digraph}
	
	We continue our investigation of non-singular digraphs and focus on identifying non-singular $\theta-$type 3-colored bicyclic digraphs.
	
	\begin{lem}\label{lem}
		Let $B \in \B(\theta)$ be a digraph with more than one perfect matching. Then either exactly one cycle of $B$ is independent, or all three cycles are independent. 
	\end{lem}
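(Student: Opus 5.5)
By Proposition~\ref{mperf}, $B$ has at least one independent cycle, and by Remark~\ref{rem1} every independent cycle of $B$ is even. I will split according to the parities of the paths $\mathcal{P}_1,\mathcal{P}_2,\mathcal{P}_3$ from Remark~\ref{paths}. If exactly one path has parity different from the other two, then exactly one of $\Gamma_1,\Gamma_2,\Gamma_3$ is even. Since every independent cycle is even, exactly one cycle is independent and we are done. It therefore remains to treat the case where all three paths have the same parity, so all three cycles are even. Here I will show that if one cycle is independent, all three are.

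Assume, say, $\Gamma_1=\mathcal{P}_1\cup\mathcal{P}_2$ is independent, and let $\M$ be a perfect matching of $B-\Gamma_1$. All three paths have the same parity, so the internal vertex sets of the $\mathcal{P}_k$ all have the same parity. I would show that $\Gamma_2=\mathcal{P}_1\cup\mathcal{P}_3$ is independent by building a perfect matching of $B-\Gamma_2$. Its vertex set consists of three parts: the internal vertices of $\mathcal{P}_2$, the internal vertices of $\mathcal{P}_3$ together with any trees hanging off them, and the remaining trees hanging off $\Gamma_1$.

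The trees attached at vertices of $\Gamma_1$ other than those of $\mathcal{P}_3$ are covered by $\M$ within themselves. This holds because, in $B-\Gamma_1$, each such tree is a separate component (after removing its root), and it cannot contain $\mathcal{P}_3$'s internal vertices. The component of $B-\Gamma_1$ containing the interior of $\mathcal{P}_3$ together with its pendant trees is matched by $\M$. When $\mathcal{P}_3$ is instead removed and the interior of $\mathcal{P}_2$ is restored, I need a perfect matching of the interior of $\mathcal{P}_2$ together with the trees hanging at those vertices.

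\textbf{Case $|\mathcal{P}_k|$ odd.} Each interior is an even path, so I would take alternate edges of the interior of $\mathcal{P}_2$. The hanging trees at interior vertices of $\mathcal{P}_2$ must then be matched internally. I would argue this from the perfect matchings of $B$ obtained from $\M$ plus a perfect matching of the even cycle $\Gamma_1$, since in those matchings the cycle vertices are matched along the cycle.

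\textbf{Case $|\mathcal{P}_k|$ even.} Each interior is an odd path, so one needs a matching of the interior of $\mathcal{P}_2$ together with $j$ or $i$. This is impossible because $i,j\in\Gamma_2$. So the interior of $\mathcal{P}_2$ must use one pendant-tree edge. I expect this to be the main obstacle. To handle it, I would use the symmetric-difference argument of Proposition~\ref{mperf}: take two perfect matchings of $B$ that differ on $\Gamma_1$ and track their alternating structure through $\mathcal{P}_3$. This should produce an alternating cycle avoiding or including $\mathcal{P}_3$, giving independence of $\Gamma_2$ and, symmetrically, of $\Gamma_3$.

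If that direct construction becomes messy, my fallback is a counting approach via Proposition~\ref{mperf} applied to subgraphs. The subgraph $B-\Gamma_1$ is a forest and so has at most one perfect matching. Perfect matchings of $B$ correspond to choices on the even cycles, and the number of perfect matchings of $B$, counted through which of the paths the matching ``alternates along'', is symmetric in the three paths. That symmetry should force independence of one cycle to transfer to the others.
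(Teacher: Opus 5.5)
Your first case is correct. If the three paths do not all have the same parity, exactly one cycle is even. Every independent cycle is even (Remark~\ref{rem1}) and at least one exists (Proposition~\ref{mperf}), so exactly one cycle is independent.

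\textbf{The gap.} In the second case you set out to show that one independent cycle forces all three to be independent. That claim is false. Take $K_{2,3}$ with parts $\{i,j\}$ and $\{a,b,c\}$, and attach a pendant vertex $d$ to $a$. All three $i\leadsto j$ paths have length $2$ and all cycles are $4$-cycles. $B$ has exactly two perfect matchings: $[a,d]$ is forced, then $\{[i,b],[c,j]\}$ or $\{[i,c],[b,j]\}$. Yet only $[i,b,j,c]$ is independent, because deleting either of the other cycles leaves the non-adjacent pair $\{c,d\}$ or $\{b,d\}$.

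The odd-length subcase fails too. With paths $[i,j]$, $[i,b_1,b_2,j]$, $[i,a_1,a_2,j]$ and pendant vertices attached at $a_1$ and $a_2$, only $[i,b_1,b_2,j]$ is independent. Your construction breaks at a step you never address. $\M$ may match the trees hanging from interior vertices of $\mathcal{P}_3$ to those interior vertices, and once $\mathcal{P}_3$ is deleted these trees cannot be matched. In the even-length subcase the target is never attainable, as the parity argument below shows, so no use of pendant-tree edges can rescue it. Your symmetry fallback also fails, because the pendant trees break any symmetry among the three paths.

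\textbf{What is actually needed.} The lemma only requires that \emph{exactly two} independent cycles cannot occur. So the argument must start from two independent cycles, say $\Gamma_1=\mathcal{P}_1\cup\mathcal{P}_2$ and $\Gamma_2=\mathcal{P}_1\cup\mathcal{P}_3$. Every component of $B-(\Gamma_1\cup\Gamma_2)$ is a tree attached to a single vertex of $\Gamma_1\cup\Gamma_2$. Those attached to $\Gamma_1$ are components of $B-\Gamma_1$, and those attached to the interior of $\mathcal{P}_3$ are components of $B-\Gamma_2$. Hence perfect matchings of $B-\Gamma_1$ and $B-\Gamma_2$ restrict to a perfect matching $\M$ of $B-(\Gamma_1\cup\Gamma_2)$.

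It follows that $|V(\Gamma_1\cup\Gamma_2)|$ is even. Since $|V(\Gamma_1)|$ and $|V(\Gamma_2)|$ are even, so is $|V(\Gamma_1\cap\Gamma_2)|=|V(\mathcal{P}_1)|$. So the interior of $\mathcal{P}_1$ has an even number of vertices and is perfectly matched by alternate edges. Adding these edges to $\M$ gives a perfect matching of $B-\Gamma_3$.

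This is essentially the paper's proof. Your parity split then becomes unnecessary. The same parity computation shows that the all-three alternative forces all three paths to have odd length, which is why your even-length subcase could never succeed.
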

	\begin{proof}
		By Proposition~\ref{mperf}, at least one cycle in $B$ is independent. If $B$ contains no other independent cycle, then the result follows. Hence, assume that $B$ has two independent cycles, $\Gamma_1$ and $\Gamma_2$. Let $\M_1$ be a perfect matching of $B-\Gamma_1$, and let $\M_2$ be a perfect matching of $B-\Gamma_2$. Then $\M=\M_1 \cap \M_2$ is a perfect matching of $B-(\Gamma_1 \cup \Gamma_2)$, which implies that $|V(B-(\Gamma_1 \cup \Gamma_2))|$ is even. Consequently, $|V(\Gamma_1 \cup \Gamma_2)|$ is also even, since both $|V(B)|$ and $|V(B-(\Gamma_1 \cup \Gamma_2))|$ are even. Since
		\[
		|V(\Gamma_1 \cup \Gamma_2)| = |V(\Gamma_1)| + |V(\Gamma_2)| - |V(\Gamma_1 \cap \Gamma_2)|,
		\]
		it follows that $|V(\Gamma_1 \cap \Gamma_2)| = |V(\mathcal{P}_1)|$ is even. Let $\mathcal{P}_1 = [i_1, \dots, i_m]$. Then
		\[
		\M \cup \{[i_1, i_2], \dots, [i_{m-1}, i_m]\}
		\]
		is a perfect matching of $B-\Gamma_3$, and hence $\Gamma_3$ is independent in $B$.
		
	\end{proof}
	
	\begin{rem}\label{rem2}
		From Lemma~\ref{lem}, it follows that $|V(\mathcal{P}_1)|$ is even when all the cycles in $B\in\B(\theta)$ are independent. Since all these cycles are even, $|V(\mathcal{P}_2)|$ and $|V(\mathcal{P}_3)|$ are also even.
	\end{rem}

	\begin{lem}\label{t5pan}
		Let $B\in\B(\theta)$ be a digraph with a perfect matching. If all the cycles in $B$ are independent, then $B$ has six spanning elementary subgraphs: three perfect matchings and one spanning elementary subgraph containing a cycle for each cycle in $B$.
	\end{lem}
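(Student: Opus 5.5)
Since every cycle of $B$ is independent, Remark~\ref{rem1} shows that each $\Gamma_t$ is even, and Remark~\ref{rem2} shows that $|V(\mathcal{P}_1)|$, $|V(\mathcal{P}_2)|$ and $|V(\mathcal{P}_3)|$ are all even. The plan is to pin down the structure of $B-(\mathcal{P}_1\cup\mathcal{P}_2\cup\mathcal{P}_3)$ and then list every spanning elementary subgraph explicitly.

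Any two cycles of a $\theta$-type graph share the vertices $i$ and $j$. Hence a spanning elementary subgraph contains at most one cycle. We split the count into two parts: spanning elementary subgraphs with exactly one cycle, and perfect matchings.

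\textbf{Step 1: a unique matching of the rest.} Let $T=B-(\Gamma_1\cup\Gamma_2\cup\Gamma_3)$, i.e. $B$ minus all vertices of the three paths. Since $B$ has $n+1$ edges, $T$ is a forest, as is every $B-\Gamma_t$. By the argument in Lemma~\ref{lem}, intersecting perfect matchings of $B-\Gamma_1$ and $B-\Gamma_2$ gives a perfect matching $\M$ of $T$. We claim it is the only one. Indeed, $T$ is a forest with a perfect matching, and a forest has no even cycle, so by Proposition~\ref{mperf} it has a unique perfect matching.

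\textbf{Step 2: subgraphs containing a cycle.} Fix a cycle $\Gamma_t=\mathcal{P}_a\cup\mathcal{P}_b$, and let $\mathcal{P}_c$ be the third path. We must count perfect matchings of $B-\Gamma_t$. The forest $B-\Gamma_t$ consists of $T$ together with the interior of $\mathcal{P}_c$, a path on $|V(\mathcal{P}_c)|-2$ vertices, which is an even number. It has a perfect matching because $\Gamma_t$ is independent. Since $B-\Gamma_t$ is a forest, it has no even cycle, so by Proposition~\ref{mperf} this matching is unique. This gives exactly one spanning elementary subgraph per cycle, three in total.

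\textbf{Step 3: perfect matchings of $B$.} We must show $B$ has exactly three perfect matchings. The construction is as follows. For each $t$, take $\M$, add one of the two perfect matchings of the even path $\mathcal{P}_c$, and add a perfect matching of $\Gamma_t$ minus its $\mathcal{P}_c$-side. More concretely, for each choice of path $\mathcal{P}_a$, match $\mathcal{P}_a$ internally along its consecutive edges. This is possible because $|V(\mathcal{P}_a)|$ is even, and it covers $i$ and $j$. Then match the interiors of the other two paths by their own consecutive edges, which works because the interiors have even order. Together with $\M$ this yields three distinct perfect matchings $N_1,N_2,N_3$.

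For the converse, take any perfect matching $N$. Exactly one path $\mathcal{P}_a$ contains the edge of $N$ at $i$. A parity argument then forces the whole of $\mathcal{P}_a$ to be matched internally, and hence $j$ is matched inside $\mathcal{P}_a$. In the alternative, the symmetric difference of $N$ with $N_a$ would contain an alternating cycle, hence an even cycle, while lying inside $B$ minus some vertices, which would give extra matchings contradicting the uniqueness in Steps 1--2.

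This converse is the main obstacle. I would handle it directly. By Step 1, $N$ restricted to $T$ must equal $\M$, after arguing that edges from path vertices into $T$ cannot be used. Then $N$ restricted to the three paths is a perfect matching of the $\theta$-graph itself, and a direct parity case analysis on the paths shows there are exactly three of these. Combining, $B$ has $3+3=6$ spanning elementary subgraphs.
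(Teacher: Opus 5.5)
\textbf{There is a gap in the upper bound on perfect matchings in Step 3.} Your overall plan matches the paper's: a unique perfect matching $\M$ of the forest $T$, the three explicit matchings $N_1,N_2,N_3$, and one elementary subgraph per cycle. Your Steps 1--2 actually supply the forest-based uniqueness argument that the paper leaves implicit.

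The gap is this. The sentence ``exactly one path $\mathcal{P}_a$ contains the edge of $N$ at $i$'' is not justified. A priori $N$ could match $i$, $j$, or an interior vertex of some $\mathcal{P}_t$ to a vertex of $T$, for example into a pendant tree. Both your parity analysis on the $\theta$-graph and your claim that $N$ restricted to $T$ equals $\M$ presuppose that this cannot happen. You explicitly defer it (``after arguing that edges from path vertices into $T$ cannot be used'') but never argue it.

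The symmetric-difference hint you give does not work as stated. An alternating cycle in $N\,\Delta\,N_a$ does not contradict Steps 1--2; for instance $N_1\,\Delta\,N_2=E(\Gamma_1)$. Such a cycle is simply one of the $\Gamma_t$, which contains $i$ and $j$ and so lies in none of your forests. The paper's own proof also just exhibits three matchings and asserts exactness, so this step is missing there too.

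The missing step is short, and your symmetric-difference idea is the right tool once redirected:
\begin{enumerate}
\item For any perfect matching $N$ of $B$, $N\,\Delta\,N_1$ is a vertex-disjoint union of cycles of $B$ whose edges alternate between $N$ and $N_1$.
\item Every cycle of $B$ passes through both $i$ and $j$, so at most one such cycle occurs. Hence $N=N_1$ or $N=N_1\,\Delta\,E(\Gamma_t)$ for some $N_1$-alternating $\Gamma_t$.
\item $N_1$ covers $i$ and $j$ by edges of $\mathcal{P}_1$, so $\Gamma_3=\mathcal{P}_2\cup\mathcal{P}_3$ is not $N_1$-alternating.
\item Directly, $N_1\,\Delta\,E(\Gamma_1)=N_2$ and $N_1\,\Delta\,E(\Gamma_2)=N_3$.
\end{enumerate}
Thus $N\in\{N_1,N_2,N_3\}$. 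This rules out path--$T$ edges and makes the separate parity case analysis unnecessary.
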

	\begin{proof}
		Let $\Gamma_1$, $\Gamma_2$, and $\Gamma_3$ be the cycles in $B$. Since all the cycles in $B$ are independent, $B$ has a spanning elementary subgraph $H_i$ containing $\Gamma_i$ for each $i=1,2,3$. To count the number of perfect matchings of $B$, let $\mathcal{P}_1 = [i, i_1, \dots, i_m, j]$, $\mathcal{P}_2 = [i, j_1, \dots, j_n, j]$, and $\mathcal{P}_3 = [i, k_1, \dots, k_p, j]$, where $m$, $n$, and $p$ are all even by Remark~\ref{rem2}. 
		
		Since all the cycles are independent, $B - (\Gamma_1 \cup \Gamma_2 \cup \Gamma_3) = B - (\mathcal{P}_1 \cup \mathcal{P}_2 \cup \mathcal{P}_3)$ has a unique perfect matching $\M$. It is clear that the following are three distinct perfect matchings of $B$:
		
		\noindent
		$\M \cup \{[i, i_1], \dots, [i_m, j]\} \cup \{[j_1, j_2], \dots, [j_{n-1}, j_n]\} \cup \{[k_1, k_2], \dots, [k_{p-1}, k_p]\}$,\\
		$\M \cup \{[i_1, i_2], \dots, [i_{m-1}, i_m]\} \cup \{[i, j_1], \dots, [j_n, i]\} \cup \{[k_1, k_2], \dots, [k_{p-1}, k_p]\}$, and \\
		$\M \cup \{[i_1, i_2], \dots, [i_{m-1}, i_m]\} \cup \{[j_1, j_2], \dots, [j_{n-1}, j_n]\} \cup \{[i, k_1], \dots, [k_p, j]\}$. Hence, $B$ has exactly six spanning elementary subgraphs as claimed.
	\end{proof}
	
	\begin{lem}\label{thetaperf2}
		Let $B\in\Bt$ be a digraph with a perfect matching. If all the cycles in $B$ are independent, then $B$ is non-singular.
	\end{lem}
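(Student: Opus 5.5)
The plan is to compute $\det A(B)$ explicitly, using the count of spanning elementary subgraphs from Lemma~\ref{t5pan} together with the reduced determinant formula of Remark~\ref{det}, and then check that the result cannot vanish.

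First, by Lemma~\ref{t5pan}, $B$ has exactly three perfect matchings, so $m_0=3$. For each cycle $\Gamma_k$ ($k=1,2,3$) there is exactly one spanning elementary subgraph containing $\Gamma_k$, so $m_{1_k}=1$. All three cycles are even by Remark~\ref{rem2}, so every exponent $\frac{n+|\Gamma_k|}{2}$ is an integer. Remark~\ref{det} then gives
$$\det A(B)=3(-1)^{\frac{n}{2}}-2\sum_{k=1}^{3}\mathfrak{R}(w(\Gamma_k))(-1)^{\frac{n+|\Gamma_k|}{2}}.$$

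Second, I use the observation before Remark~\ref{det}. Since at least one cycle has weight $\pm\i$, exactly one cycle, say $\Gamma$, has weight $\pm1$, and the other two have weights $\pm\i$, so their real parts are $0$. The formula therefore collapses to
$$\det A(B)=(-1)^{\frac{n}{2}}\left[3-2w(\Gamma)(-1)^{\frac{|\Gamma|}{2}}\right],$$
whose bracket equals $1$ or $5$, and in particular is nonzero. Even if one allowed all cycle weights to be $\pm1$, each term of $\det A(B)$ other than the matching term is a multiple of $2$. Hence $\det A(B)\equiv m_0=3\pmod 2$, which is odd and so nonzero, exactly as in Corollary~\ref{upm}.

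I do not expect a real obstacle. The only point requiring care is to justify from Lemma~\ref{t5pan} that $m_0=3$ and $m_{1_k}=1$ exactly: each $B-\Gamma_k$ must have a unique perfect matching. I would rely on that lemma's enumeration for this rather than re-derive it. Given the parity argument, even the precise values matter only through $m_0$ being odd.
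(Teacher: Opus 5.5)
Your proposal is correct and follows the paper's proof: it combines the enumeration from Lemma~\ref{t5pan} (three perfect matchings, and one spanning elementary subgraph for each cycle) with Remark~\ref{det} to get $\det A(B)=(-1)^{\frac{n}{2}}\bigl[3-2w(\Gamma_1)(-1)^{\frac{|\Gamma_1|}{2}}\bigr]\in\{\pm1,\pm5\}$, which is nonzero. The paper does not use your additional parity remark, $\det A(B)\equiv m_0=3\pmod 2$ as in Corollary~\ref{upm}, but it is valid, and it makes the conclusion independent of the cycle weights.
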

	\begin{proof}
		Let $\Gamma_1$, $\Gamma_2$, and $\Gamma_3$ be the cycles in $B$ such that $w(\Gamma_1)=\pm1$. 
%
%
		By Lemma~\ref{t5pan}, $B$ has three perfect matchings and three spanning elementary subgraphs containing a cycle for each cycle.
		Since $w(\Gamma_1)=\pm1$ and weights of the other two cycles are $\pm\i$, using Remark~\ref{det}, we get
		\begin{equation*}
			\begin{split}
				\det A(B)&=3\times (-1)^{\frac{n}{2}}-2w(\Gamma_1)\times (-1)^{\frac{n+|\Gamma_1|}{2}}\\
					&=(-1)^\frac{n}{2}\left[3-2w(\Gamma_1)\times(-1)^{\frac{|\Gamma_1|}{2}}\right]\\
					&\neq 0.
			\end{split}
		\end{equation*}
	\end{proof}
	
	We summarize the characterization of non-singular digraphs in $\B(\theta)$ in the following theorem.
	
	\begin{thm}\label{thmt}
		Let $B\in\Bt$ be a digraph, and let $\Gamma_1$, $\Gamma_2$, and $\Gamma_3$ be cycles in $B$. Then $B$ is non-singular if and only if one of the following holds:
		\begin{enumerate}[1.]
			\item $B$ has a unique perfect matching.
			\item $B$ has more than one perfect matching and satisfies one of the following:
			\begin{enumerate}[i)]
				\item $B$ has more than one independent cycles.
				\item $B$ has exactly one independent cycle $\Gamma_1$ such that one of the following holds:\begin{enumerate}[a)]
					\item $w(\Gamma_1)=1$ and $|\Gamma_1|=4k+2$
					\item $w(\Gamma_1)=-1$ and $|\Gamma_1|=4k$
					\item $w(\Gamma_1)=\pm\i$.
				\end{enumerate}
			\end{enumerate}
			\item $B$ has no prefect matching and contains an odd independent cycle $\Gamma_1$ with $w(\Gamma_1)=\pm1$.
		\end{enumerate}
	\end{thm}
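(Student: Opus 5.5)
The proof is a case split on the number of perfect matchings $m_0$ of $B$, assembling the lemmas already proved. In each case we show that the listed condition is exactly the condition under which the formula for $\det A(B)$ from Remark~\ref{det} is nonzero.

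\emph{Case $m_0=0$.} Lemma~\ref{noper} says $B$ is non-singular if and only if it has an independent cycle of weight $\pm1$. By Remark~\ref{rem1}, an independent cycle in a graph with no perfect matching is odd. Hence this case is precisely condition 3.

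\emph{Case $m_0=1$.} Corollary~\ref{upm} gives non-singularity, which is condition 1.

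\emph{Case $m_0\ge 2$.} By Proposition~\ref{mperf} there is at least one independent cycle. By Lemma~\ref{lem}, either exactly one cycle is independent or all three are.

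If all three are independent, Lemma~\ref{thetaperf2} gives non-singularity, matching 2(i). Here "more than one independent cycle" is equivalent to "all three independent", again by Lemma~\ref{lem}.

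If exactly one cycle, call it $\Gamma_1$, is independent, Lemma~\ref{123} shows there are exactly three spanning elementary subgraphs. So Lemma~\ref{1ind} applies, since its proof used only Remark~\ref{det} and Lemma~\ref{123} and is valid for $\theta$-type as well. It gives $\det A(B)=2(-1)^{n/2}$ when $w(\Gamma_1)=\pm\i$, which is nonzero and yields 2(ii)(c). When $w(\Gamma_1)=\pm1$ it gives $\det A(B)=2(-1)^{n/2}\bigl[1-w(\Gamma_1)(-1)^{|\Gamma_1|/2}\bigr]$. Since $\Gamma_1$ is even by Remark~\ref{rem1}, this vanishes exactly when $w(\Gamma_1)=(-1)^{|\Gamma_1|/2}$, that is, when $w(\Gamma_1)=1$ with $|\Gamma_1|\equiv0\pmod 4$, or $w(\Gamma_1)=-1$ with $|\Gamma_1|\equiv 2\pmod 4$. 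The complementary possibilities are exactly 2(ii)(a) and 2(ii)(b).

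Each case yields both directions, so combining them proves the equivalence. The main care point is the exactly-one-independent-cycle subcase. First, we must confirm that the counting of Lemma~\ref{123} and the formula of Lemma~\ref{1ind} really hold for $\theta$-type digraphs; both were stated for all $B\in\B$, so this is fine. Second, we must handle the parity bookkeeping of $(-1)^{|\Gamma_1|/2}$ correctly. Note also that in the $\theta$-type setting exactly one of the three cycles has real weight $\pm1$, so in the case where all three are independent, Lemma~\ref{thetaperf2} covers every weight assignment.
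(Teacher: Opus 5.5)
Your proposal is correct and takes the same approach as the paper, whose proof simply cites Corollary~\ref{upm}, Lemma~\ref{noper}, Lemma~\ref{1ind}, and Lemma~\ref{thetaperf2}. You spell out the connecting steps the paper leaves implicit: the case split on the number of perfect matchings, the use of Proposition~\ref{mperf} and Lemma~\ref{lem} to reduce to ``exactly one or all three cycles independent'', and the parity check showing that $w(\Gamma_1)(-1)^{|\Gamma_1|/2}\neq 1$ is exactly conditions 2(ii)(a)--(b).
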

	\begin{proof}
		The result follows from Corollary~\ref{upm}, Lemma~\ref{noper}, Lemma~\ref{1ind}, and Lemma~\ref{thetaperf2}.
	\end{proof}

	\section{3-colored digraphs that has zero diagonal in the inverse of the adjacency matrix}\label{3}
	
	In this section, we identify all non-singular digraphs in $\mathscr{U}$ and $\B$ such that the diagonal entries of $A(B)^{-1}$ are all $zero$. The following theorem from \cite{DK2} is crucial for our analysis.
	
	\begin{lem}(\cite{DK2}, Theorem 3.1)\label{inventry}
		Let $G$ be a non-singular 3-colored digraph and $A(G)$ be its adjacency matrix. Let $A(G)^{-1}=[b_{ij}]$, then 
		\begin{equation*}
			b_{ij}=\left\{\begin{array}{ll}
				\frac{1}{\det A(G)}\sum_{P\in \mathscr{P}_{ij}}\left[w(P)\left(\sum_H(-1)^{|V(G)|-|S_H|+|P|}2^{|C_H|}\right)\right] & \textit{ if }i\neq j\\
				\frac{1}{\det A(G)}\det A(G)_{ii} & \text{otherwise},
			\end{array}\right.
		\end{equation*}
		where $\mathscr{P}_{ij}$ is the set of all $i\leadsto j$ paths $P$ such that $G-P$ has a contributing spanning elementary subgraph, $H$ is a contributing spanning elementary subgraph of $G-P$, and $A(G)_{ii}$ is the matrix obtained by deleting the $i^{th}$ row and the $i^{th}$ column of $A(G)$.
	\end{lem}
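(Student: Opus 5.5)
The statement is the cited inverse-entry formula (\cite{DK2}, Theorem 3.1). I would prove it from the adjugate formula $A(G)^{-1}=\frac{1}{\det A(G)}\operatorname{adj}A(G)$, which gives $b_{ij}=\frac{1}{\det A(G)}(-1)^{i+j}\det A(G)_{ji}$, where $A(G)_{ji}$ is $A(G)$ with row $j$ and column $i$ deleted. For $i=j$ this is exactly the stated diagonal case. For $i\neq j$ the task is to expand the off-diagonal cofactor combinatorially, in the same spirit as Lemma~\ref{adjdet}.

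For $i\neq j$, I would write the cofactor as a signed sum over permutations $\sigma$ of $V(G)$ with $\sigma(j)=i$. This is the same as expanding $\det A(G)$ and keeping only the terms that contain the factor $a_{ji}$, then dividing that entry out. Each such $\sigma$ decomposes into disjoint cycles. The cycle through $j$ and $i$ has the form $j\to i\to\sigma(i)\to\cdots\to j$. After the fictitious step $j\to i$ is removed, it becomes an $i\leadsto j$ path $P$ in $G$; it is an honest path because a nonzero term needs $a_{k\sigma(k)}\neq 0$ along it. The remaining cycles of $\sigma$ live on $G-P$. As in the standard proof of Lemma~\ref{adjdet}, group the permutations on $G-P$ by their underlying spanning elementary subgraph $H$. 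Then 2-cycles of $\sigma$ correspond to edges of $H$, each contributing $-|a_{kl}|^2=-1$. A cycle $C$ of $H$ of length at least $3$ arises from both orientations, contributing $(-1)^{|C|+1}\bigl(w(C)+\overline{w(C)}\bigr)=(-1)^{|C|+1}2\,\mathfrak{R}(w(C))$. This vanishes when $w(C)=\pm\i$, which is why only contributing $H$ survive, and it equals $\pm 2$ otherwise. The path contributes the product of its entries, namely $w(P)$.

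The main obstacle is sign bookkeeping. The total sign from the path is $(-1)^{i+j}$ (cofactor) times $\operatorname{sgn}\sigma$, where the path-cycle has length $|P|+1$ and therefore contributes $(-1)^{|P|}$. From the elementary subgraph, each edge contributes $-1$. Each cycle of weight $\pm1$ contributes $2(-1)^{|C|+1}w(C)$; as in Remark~\ref{detn}, the sign $w(C)=-1$ makes that cycle a non-singular component and $w(C)=1$ a singular one. I would check that $(-1)^{i+j}$ cancels against the sign produced by viewing $\sigma$ as a bijection restricted to the minor, which amounts to the standard identity $\det$ of the minor $=(-1)^{i+j}\cdot$(sum over $\sigma$ with $\sigma(j)=i$, weighted by $\operatorname{sgn}\sigma$). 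Collecting the factors then gives $(-1)^{|V(G)|-|S_H|+|P|}2^{|C_H|}$, using the parity count $|V(G-P)|-(\text{number of components of } H)$, exactly as in Lemma~\ref{adjdet} applied to $G-P$. Paths $P$ for which $G-P$ has no contributing spanning elementary subgraph contribute zero, so summing over $\mathscr{P}_{ij}$ and $H$ gives the formula. Whether the path appears as $w(P)$ or $\overline{w(P)}$ depends on the orientation convention for $i\leadsto j$, and I would fix the convention to match the statement.
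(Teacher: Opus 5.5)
\paragraph{Verdict.} The paper gives no proof of this lemma; it only quotes \cite{DK2}. Your route is the standard one, and its ingredients are right: $b_{ij}=C_{ji}/\det A(G)$, with $C_{ji}$ written as the signed sum over permutations $\sigma$ with $\sigma(j)=i$. The $\sigma$-cycle through $j$ becomes an $i\leadsto j$ path of weight $w(P)$, the rest of $\sigma$ is grouped by spanning elementary subgraphs of $G-P$, and cycles of weight $\pm\i$ cancel. This gives
\[
b_{ij}=\frac{1}{\det A(G)}\sum_{P}(-1)^{|P|}\,w(P)\,\det A(G-P).
\]

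\paragraph{The gap.} The problem is the step you deferred: the sign bookkeeping does not produce the displayed exponent. With your own ingredients, the path-cycle has length $|P|+1$, so it contributes $(-1)^{|P|}$. Lemma~\ref{adjdet} applied to $G-P$ contributes $(-1)^{|V(G-P)|-|S_H|}$, and $|V(G-P)|=|V(G)|-|P|-1$. The total sign is therefore $(-1)^{|V(G)|-1-|S_H|}$, which does not depend on $|P|$. It agrees with $(-1)^{|V(G)|-|S_H|+|P|}$ only when $|P|$ is odd. For paths of even length, your claim that collecting the factors gives the stated exponent is false, and the printed formula itself is false there.

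\paragraph{Counterexample.} Take $G$ to be a triangle with all edges of weight $1$. Then $A(G)=J-I$, with $J$ the $3\times 3$ all-ones matrix, so $\det A(G)=2$ and $A(G)^{-1}=\frac12J-I$, giving $b_{12}=\frac12$. The graph $G-[1,2]$ is a single vertex and has no spanning elementary subgraph, so $\mathscr{P}_{12}=\{[1,3,2]\}$. The displayed formula then gives $\frac12(-1)^{3-0+2}=-\frac12$.

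\paragraph{What you can prove.} No amount of bookkeeping will close the argument as written. What you can prove is
\[
b_{ij}=\frac{1}{\det A(G)}\sum_{P\in\mathscr{P}_{ij}} w(P)\sum_{H}(-1)^{|V(G)|-1-|S_H|}\,2^{|C_H|},
\]
equivalently with exponent $(-1)^{|V(G-P)|-|S_H|+|P|}$. This coincides with the stated version exactly for paths with an even number of vertices. That restricted version is all the paper actually needs. In every place it applies the formula (Lemma~\ref{uniqe=3c}, Theorem~\ref{uni3in}, Lemmas~\ref{unino3col}, \ref{lem5.1} and~\ref{lem4.2}, Example~\ref{exno3}), the paths are mm-alternating or arcs of an even cycle, and hence have odd length.

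\paragraph{Minor point.} Rather than ``dividing $a_{ji}$ out'' of the expansion of $\det A(G)$, take the coefficient of $a_{ji}$, i.e.\ $C_{ji}=\partial \det A/\partial a_{ji}$, since $a_{ji}$ may be $0$.
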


	Note that if $V(G)=V(P)$, then $G-P$ has a contributing spanning elementary subgraph vacuously.
	
	\begin{rem}\label{rem3}
		Let $G$ be a non-singular 3-colored digraph and $A(G)^{-1}=[b_{ij}]$. From Lemma~\ref{inventry}, $b_{ii}=0$ if and only if $\det A(G_i)=0$, where $G_i=G-i$.
	\end{rem}

	For digraphs in $\mathscr{U}$ we immediately obtain the following result.
	
	\begin{thm}\label{unibii}
		Let $U\in\mathscr{U}$ be non-singular, and let $A(U)^{-1}=[b_{ij}]$. Then $b_{ii}=0$ for all $i\in V(U)$.
	\end{thm}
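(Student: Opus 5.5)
By Remark~\ref{rem3}, it suffices to show that $\det A(U_i)=0$ for every vertex $i\in V(U)$, where $U_i=U-i$. Since $U$ is non-singular, Theorem~\ref{unons} shows that $U$ has a perfect matching. In particular $|V(U)|=n$ is even, so $U_i$ has an odd number of vertices and therefore has no perfect matching. The plan is to show that $U_i$ has no contributing spanning elementary subgraph at all, and then conclude with Lemma~\ref{adjdet}.

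First I split according to the position of $i$. If $i$ lies on the unique cycle $\Gamma$ of $U$, then deleting $i$ destroys $\Gamma$, so $U_i$ is a forest. A forest has only independent edges as elementary components, so its spanning elementary subgraphs are exactly its perfect matchings. Since $U_i$ has odd order, it has none, and hence $\det A(U_i)=0$.

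If $i\notin V(\Gamma)$, then the only cycle in $U_i$ is $\Gamma$, and by the standing assumption $w(\Gamma)=\pm\i$. Any spanning elementary subgraph of $U_i$ that uses $\Gamma$ is therefore not contributing. Every other spanning elementary subgraph is a perfect matching of $U_i$, which cannot exist because $|V(U_i)|$ is odd. So the set $\mathscr{C}$ in Lemma~\ref{adjdet} is empty, and $\det A(U_i)=0$. This is exactly the argument of Corollary~\ref{npm} applied to the digraph $U_i$, all of whose cycles have weight $\pm\i$: it gives $\det A(U_i)=\pm m_0(U_i)=0$.

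Combining the two cases gives $b_{ii}=0$ for all $i$. I expect no serious obstacle. The only point to watch is that $U_i$ may be disconnected, so it lies outside $\mathscr{U}$ and I cannot apply Theorem~\ref{unons} to it directly. Lemma~\ref{adjdet} and Corollary~\ref{npm} do not require connectedness, so using them avoids this issue.
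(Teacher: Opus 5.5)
Your proof is correct and follows the paper's argument. By Theorem~\ref{unons}, $U$ has a perfect matching, so $U-i$ has odd order and no perfect matching; every cycle of $U-i$ has weight $\pm\i$, so Corollary~\ref{npm} gives $\det A(U-i)=0$, and Remark~\ref{rem3} gives $b_{ii}=0$. Your case split on whether $i\in V(\Gamma)$ is harmless but unnecessary, since the forest case is covered vacuously by the same corollary; your remark that $U-i$ may be disconnected, so only connectedness-free results should be used, is a fair point that the paper leaves implicit.
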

	\begin{proof}
		Let $i\in V(U)$ be arbitrary, and let $U_i = U - i$. By Theorem~\ref{unons}, $U$ has a perfect matching. Consequently, $|V(U_i)|$ is odd and thus $U_i$ does not have a perfect matching. Moreover, since the unique cycle in $U$ has weight $\pm\i$, it follows from Corollary~\ref{npm} that $\det A(U_i) = 0$. Hence, $b_{ii} = 0$.
	\end{proof}
	
	\subsection{Non-singular digraphs in $\B$ with zero diagonal in the inverse of the adjacency matrix}
		
	We now turn to the characterization of non-singular digraphs $B \in \B$ for which $A(B)^{-1}$ has zero diagonal.

	\begin{lem}\label{lem2.1}
		Let $G$ be a non-singular 3-colored digraph with a perfect matching and let $A(G)^{-1}=[b_{ij}]$. If every cycle in $G$ is even, then $b_{ii}=0$ for all $i\in V(G)$.
	\end{lem}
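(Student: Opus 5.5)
By Remark~\ref{rem3}, $b_{ii}=0$ holds exactly when $\det A(G_i)=0$, where $G_i=G-i$. So the plan is to show that $\det A(G_i)=0$ for every vertex $i$. The natural tool is Lemma~\ref{evennomatch}, which kills the determinant of any digraph that has no perfect matching and only even cycles.

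First, $G$ has a perfect matching, so $|V(G)|$ is even. Hence $|V(G_i)|=|V(G)|-1$ is odd, and $G_i$ has no perfect matching.

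Second, every cycle of $G_i$ is also a cycle of $G$, since $G_i$ is an induced subgraph. So all cycles of $G_i$ are even.

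Third, apply the argument of Lemma~\ref{evennomatch} to $G_i$. Any spanning elementary subgraph $H$ of $G_i$ consists of independent edges and even cycles. Replacing each cycle component by one of its perfect matchings turns $H$ into a perfect matching of $G_i$. That cannot exist, since $G_i$ has odd order. Hence $G_i$ has no spanning elementary subgraph at all, and by Lemma~\ref{adjdet} (equivalently Remark~\ref{detn}), $\det A(G_i)=0$.

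The only point needing care is that Lemma~\ref{evennomatch} is stated for connected digraphs, while $G_i$ may be disconnected. The combinatorial argument never uses connectedness, and Lemma~\ref{adjdet} is just the expansion of the determinant as a sum over spanning elementary subgraphs, which holds for any 3-colored digraph. So this is a matter of remarking that the argument carries over, not a real obstacle.

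Alternatively, one can skip the matching argument and use parity directly: every spanning elementary subgraph of a graph whose cycles are all even covers an even number of vertices, so an odd-order graph has none. This gives $b_{ii}=\det A(G_i)/\det A(G)=0$ for all $i\in V(G)$.
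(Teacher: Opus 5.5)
Your proof is correct and follows the paper's argument: you reduce via Remark~\ref{rem3} to $\det A(G-i)=0$, note that $G-i$ has odd order and so no perfect matching, and apply Lemma~\ref{evennomatch} because all cycles of $G-i$ are even. Your observation that connectedness of $G-i$ is never used, and your direct parity argument that every spanning elementary subgraph of a graph with only even cycles has even order, are harmless refinements of the same idea.
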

	\begin{proof}
		Let $i \in V(G)$, and let $G_i = G - i$. Since $G$ has a perfect matching, $G_i$ does not admit any perfect matching. Moreover, as $G$ contains no odd cycle, it follows from Lemma~\ref{evennomatch} that $\det A(G_i) = 0$ for all $i \in V(G)$. Hence, by Remark~\ref{rem3}, we have $b_{ii} = 0$ for all $i \in V(G)$.
		
	\end{proof}

	\begin{lem}\label{lem2.3}
		Let $G$ be a non-singular 3-colored digraph with a perfect matching and let $A(G)^{-1}=[b_{ij}]$. If $G$ does not contain an odd cycle of weight $\pm1$, then $b_{ii}=0$ for all $i\in V(G)$.
	\end{lem}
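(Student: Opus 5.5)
By Remark~\ref{rem3}, it suffices to show that $\det A(G_i)=0$ for every $i\in V(G)$, where $G_i=G-i$. I would compute this determinant with Lemma~\ref{adjdet} and show that the index set $\mathscr{C}$ of contributing spanning elementary subgraphs of $G_i$ is empty, so the sum is zero.

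First, $G$ has a perfect matching, so $|V(G)|$ is even and $|V(G_i)|$ is odd. Now let $H$ be any spanning elementary subgraph of $G_i$. Independent edges and even cycles each cover an even number of vertices. Since $|V(G_i)|$ is odd, $H$ must contain at least one odd cycle $\Gamma$. This $\Gamma$ is a cycle of $G_i$, and hence also of $G$.

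Next I use the hypothesis: $G$ has no odd cycle of weight $\pm1$. In a 3-colored digraph every cycle weight is a product of elements of $\{\pm1,\pm\i\}$, so it lies in $\{\pm1,\pm\i\}$. Therefore $w(\Gamma)=\pm\i$. By definition, a spanning elementary subgraph containing a cycle of weight $\pm\i$ is not contributing, so $H\notin\mathscr{C}$. Since $H$ was arbitrary, $\mathscr{C}$ is empty for $G_i$, and Lemma~\ref{adjdet} gives $\det A(G_i)=0$. Remark~\ref{rem3} then yields $b_{ii}=0$, and this holds for all $i$.

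The argument is short and there is no serious obstacle. The one point needing care is the parity step: every spanning elementary subgraph of an odd-order graph must contain an odd cycle. This is the step that replaces the use of Lemma~\ref{evennomatch} in Lemma~\ref{lem2.1}, and it makes the present lemma a generalization of that one. Non-singularity of $G$ is used only so that $A(G)^{-1}$ exists and Remark~\ref{rem3} applies.
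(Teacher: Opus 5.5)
Your proposal is correct and matches the paper's proof essentially step for step. Since $|V(G_i)|$ is odd, every spanning elementary subgraph of $G_i$ contains an odd cycle, and that cycle has weight $\pm\i$, so $G_i$ has no contributing spanning elementary subgraph; Lemma~\ref{adjdet} then gives $\det A(G_i)=0$, and Remark~\ref{rem3} gives $b_{ii}=0$.
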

	\begin{proof}
		Let $i \in V(G)$, and let $G_i = G - \{i\}$. Since $|V(G_i)|$ is odd, any spanning elementary subgraph of $G_i$ must contain an odd cycle. As the weight of every odd cycle in $G$ is $\pm\i$, the graph $G_i$ admits no contributing spanning elementary subgraph. Therefore, by Lemma~\ref{adjdet}, $\det A(G_i) = 0$. Hence, by Remark~\ref{rem3}, we have $b_{ii} = 0$ for all $i \in V(G)$.
		
	\end{proof}

	For further development, we introduce the following two definitions.
	
	
	\begin{defn}
		Let $G$ be a (di)graph with a perfect matching $\M$, and let $\Gamma$ be a cycle in $G$. The edge $[v,v'] \in \M$ is called a \textbf{peg} on $\Gamma$ relative to the perfect matching $\M$ if $[v,v']$ is a chord of $\Gamma$ or $v \in V(\Gamma)$ and $v' \notin V(\Gamma)$. It follows that an odd cycle has an odd number of pegs, while an even cycle has no pegs if it is independent in $G$ and has an even number of pegs if it is not independent.
	\end{defn}

	\begin{defn}
		Let $G$ be a (di)graph with a perfect matching $\M$. A path $P=[i_1,i_2,\dots,i_n]$ in $G$ is called as \textbf{mm-alternating} (relative to the perfect matching $\M$) if the edges of $P$ alternately belongs to $\M$, with the condition  that $[i_1,i_2],[i_{n-1},i_n]\in \M$.
	\end{defn}
	
	\begin{rem}\label{rem4}
		Let $B\in\B$ be a graph possessing a perfect matching $\M$. Let $P$ be a path in $B$ such that $\Gamma$ is independent in $B-P$, and $H$ be a spanning elementary subgraph of $B-P$ containing $\Gamma$. If $\Gamma$ has a peg $[v,v']$ ($v\in V(\Gamma)$) relative to $\M$, then there exists a $v\leadsto i_v'$ \al path $[v,v',i_1,i_1',\ldots,i_v,i_v']$ such that $i_v'\in V(P)\cup V(\Gamma)$. 
		
		To see this, observe that if $v'\in V(P)$ or $v'\in V(\Gamma)$, we may set $i_v'=v'$. Otherwise, there exists a vertex $i_1$ such that $[v',i_1]$ is an independent edge in $H$. Since $\M$ is a perfect matching, there is a vertex $i_1'$ such that $[i_1,i_1']\in \M$. If $i_1'\in V(P)$ or $i_1'\in V(\Gamma)$, we set $i_v'=i_1'$. Otherwise, we repeat the process until we find such a vertex $i_v'$.
	\end{rem}

	The following lemma gives a condition under which a cycle can be a component of a spanning elementary subgraph of $B-P$ for a path $P$.
	
	\begin{lem}\label{span}
		Let $B\in\B$ be a digraph with a perfect matching, and let $\Gamma$ be a cycle in $B$. If $\Gamma$ has at least three pegs relative to every perfect matching of $B$, then $\Gamma$ is not independent in $B-P$ for any path $P$.
	\end{lem}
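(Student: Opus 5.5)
We argue by contraposition. Suppose there is a path $P$ such that $\Gamma$ is independent in $B-P$, i.e.\ $B-P$ has a spanning elementary subgraph $H$ containing $\Gamma$ whose other components are independent edges. Here we read the statement (as in Lemma~\ref{inventry}) with $P$ a path of $B$ vertex-disjoint from $\Gamma$ and $B-P$ admitting a perfect matching after removing $\Gamma$. Fix any perfect matching $\M$ of $B$. By hypothesis $\Gamma$ has at least three pegs relative to $\M$, say $[v_1,v_1'],[v_2,v_2'],[v_3,v_3']$ with $v_k\in V(\Gamma)$. The goal is to exhibit a new cycle in $B$ from these pegs, contradicting the fact that $B$ is bicyclic.

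First, peel off chords. If some peg is a chord of $\Gamma$, then $\Gamma$ together with the chord already creates a $\theta$-subgraph on $V(\Gamma)$. Every other cycle of $B$ then lives inside $\Gamma\cup\{\text{chord}\}$, and a second peg leaving $\Gamma$ will be handled by the path argument below.

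Next, apply Remark~\ref{rem4} to each non-chord peg $[v_k,v_k']$. This yields an \al path $Q_k$ from $v_k$ ending at a vertex $i_{v_k}'\in V(P)\cup V(\Gamma)$. Its interior lies outside $\Gamma\cup P$, since it alternates between edges of $\M$ and independent edges of $H$. Two kinds of endpoint can occur:
\begin{enumerate}[(a)]
\item If $i_{v_k}'\in V(\Gamma)$, then $Q_k$ together with an arc of $\Gamma$ forms a cycle different from $\Gamma$.
\item If $i_{v_k}'\in V(P)$, then $Q_k$ links $\Gamma$ to $P$.
\end{enumerate}
Two paths of type (b), together with the subpath of $P$ between their endpoints and an arc of $\Gamma$, form a cycle distinct from $\Gamma$. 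Two distinct paths $Q_k,Q_l$ that meet outside $\Gamma\cup P$ likewise produce a cycle through $\Gamma$. The count then works as follows. Each (a)-path, or each additional (b)-path beyond the first, contributes a cycle independent of $\Gamma$ in the cycle space of $B$. With three pegs we therefore obtain at least two new independent cycles besides $\Gamma$, giving cyclomatic number at least $3$. This contradicts $|E(B)|=|V(B)|+1$.

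The main obstacle is making this count rigorous, i.e.\ showing that the three alternating paths really contribute two independent new cycles rather than collapsing onto each other. The delicate points are:
\begin{itemize}
\item the paths $Q_k$ may share vertices;
\item a chord-peg's second endpoint is also on $\Gamma$;
\item the endpoints on $P$ may coincide.
\end{itemize}
To handle these, I would work in the subgraph $\Gamma\cup P\cup\bigcup_k Q_k$. Its edges split as $E(\Gamma)$, plus the edges of $P$, plus the $\M$-edges and $H$-edges of the $Q_k$. I would count its edges minus its vertices directly. Alternating paths cannot branch, since each vertex outside $\Gamma\cup P$ has exactly one $\M$-edge and one $H$-edge, so distinct $Q_k$ starting at distinct $v_k$ are vertex-disjoint except possibly at their final endpoints. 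This gives excess at least $2$, which is impossible in a bicyclic graph. A coincidence of two $Q_k$ at a common final endpoint on $P$ or $\Gamma$ only creates a cycle earlier.
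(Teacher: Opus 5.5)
Your proposal has a genuine gap: it never uses the hypothesis that $\Gamma$ has at least three pegs relative to \emph{every} perfect matching. You fix one $\M$ and try to reach a contradiction from three pegs relative to $\M$ alone, and that version of the statement is false.

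Here is a counterexample. Take the triangle $\Gamma=[u,v,w]$, add vertices $u',v',w'$, and add edges $[u,u'],[v,v'],[w,w'],[v',w']$. This graph has $6$ vertices and $7$ edges, so it lies in $\B(\theta)$. Relative to $\M=\{[u,u'],[v,v'],[w,w']\}$ the cycle $\Gamma$ has three pegs. Yet $\Gamma\cup\{[v',w']\}$ is a spanning elementary subgraph of $B-P$ for $P=[u']$, which is exactly the situation $B_{u'}$ in which the lemma is applied in Lemma~\ref{lem2.4}.

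The step that breaks is your count. Here $Q_v=[v,v',w',w]$ and $Q_w=[w,w',v',v]$ are the same path traversed in opposite directions. Together they give one new cycle, not two, while $Q_u=[u,u']$ merely reaches $P$. So $\Gamma\cup P\cup\bigcup_k Q_k$ has cyclomatic number $2$, and there is no contradiction.

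This is not an accident. The last edge of a type (a) path is an $\M$-edge from a vertex outside $\Gamma\cup P$ into $\Gamma$, so it is itself a peg. By your own non-branching observation, type (a) paths therefore always come in reversed pairs. Your claim that distinct $Q_k$ meet only at their final endpoints is false. With three pegs, your count yields a contradiction only when all three paths end in $P$. The configuration with one path to $P$ and the other two pegs joined to each other is fully consistent with $B$ being bicyclic.

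The missing idea, which is the heart of the paper's proof, is to change the perfect matching in that remaining case. Suppose the path from $v$ ends at $w$. With exactly three pegs, the arc of $\Gamma$ between $v$ and $w$ that avoids $u$ is matched internally by $\M$. Together with $Q_v$ it forms an $\M$-alternating even cycle $C$. Then $\M'=\M\,\Delta\,E(C)$ is a perfect matching of $B$ relative to which $[v,v']$ and $[w,w']$ are no longer pegs on $\Gamma$; in the example, $\M'=\{[u,u'],[v,w],[v',w']\}$.

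Only at this point does the quantifier over all perfect matchings enter. $\Gamma$ must still have at least three pegs relative to $\M'$, which is impossible when $[u,u']$ is the only one left. With more pegs, one reruns the alternating-path argument on the pegs remaining relative to $\M'$, as the paper does. Your cycle-space bookkeeping is a clean way to dispose of the case where two or more paths end in $P$, but without this switching step the proof does not go through.
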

	\begin{proof}
		Let $\M$ be a perfect matching of $B$. First, assume that $\Gamma$ has exactly three pegs relative to $\M$. Let $[u,u']$, $[v,v']$, and $[w,w']$ be the pegs on $\Gamma$, where $u,v,w \in V(\Gamma)$. Define
		$
		\M_0=\M\setminus\bigl(E(\Gamma)\cup \{[u,u'],[v,v'],[w,w']\}\bigr),
		$
		so that $\M_0$ is a perfect matching of $B-(\Gamma\cup \{u',v',w'\})$.
		
		To obtain a contradiction, assume that $\Gamma$ is independent in $B-P$ for some path $P$. By Remark~\ref{rem4}, there exist vertices $i_u'$, $j_v'$, and $k_w'$ and corresponding \al paths $[u,u',\dots,i_u,i_u']$, $[v,v',\dots,j_v,j_v']$, and $[w,w',\dots,k_w,k_w']$ such that $i_u',j_v',k_w'\in V(P)\cup V(\Gamma)$.
		
		Observe that at most one of these vertices can lie in $V(P)$. Indeed, if this were not the case—say $i_u',j_v'\in V(P)$ and $k_w'\in V(\Gamma)$—then the cycles $\Gamma,\quad [w,\dots,k_w,k_w',\dots,w]$,\newline $ [w,\dots,k_w,k_w',\dots,u,\dots,w]$, and $[u,u',\dots,i_u',\dots,j_v',\dots,v',v,\dots,u]$ would all be distinct cycles in $B$, which is impossible since $B$ has at most three cycles (see Figure~\ref{ne}(a)).
		
		Next, assume that $i_u'\in V(P)$, so that $j_v',k_w'\in V(\Gamma)$. We must have $j_v=w'$ and $j_v'=w$; otherwise, by an argument analogous to the one above, $B$ would contain more than three distinct cycles. In this case, the cycles $\Gamma$, $[v,v',\dots,w',w,\dots,v]$, and $[v,v',\dots,w',w,\dots,u,\dots,v]$ are three distinct cycles in $B$ (see Figure~\ref{ne}(b)).
		
		Label the vertices of $\Gamma$ as
		\[
		\Gamma=[u,p_1,p_1',\dots,p_u,p_u',v,q_1,\dots,q_v',w,r_1,\dots,r_w',u].
		\]
		Define $\M'$ to be the union of $\M_0$ with the edges in
		$
		\{[p_1,p_1'],\dots,[p_u,p_u'],[r_w',r_w],\dots,[r_1',r_1]\},$
		$\{[v,q_1],[q_1',q_2],\dots,[q_v',w]\},$
		$\{[v',j_1],\dots,[j_{v-1}',w']\},$ and $\{[u,u']\}$. Then $\M'$ is a perfect matching of $B$, relative to which $[v,v']$ and $[w,w']$ are not pegs on $\Gamma$.
		
		Since $\Gamma$ has at least three pegs relative to every perfect matching of $B$, there must exist pegs $[x,x']$ and $[y,y']$ on $\Gamma$ relative to $\M'$. By Remark~\ref{rem4}, there exist vertices $i_x'$ and $i_y'$ such that $i_x',i_y'\in V(\Gamma)\cup V(P)$. Since at most one of $i_u'$, $i_v'$, $i_w'$, $i_x'$, and $i_y'$ can be in $V(P)$ and $i_u'\in V(P)$, we mist have $i_x',i_y'\in V(\Gamma)$. This again leads to a contradiction, since $[x,x',\dots,i_x',\dots,i_y',\dots,y',y,\dots,x]$ would form another cycle in $B$.
		
		A similar contradiction arises when all of $i_u',j_v',k_w'$ lie in $V(\Gamma)$. The argument extends analogously when $\Gamma$ has more than three pegs relative to $\M$. Hence, $\Gamma$ cannot be independent in $B-P$.
		
		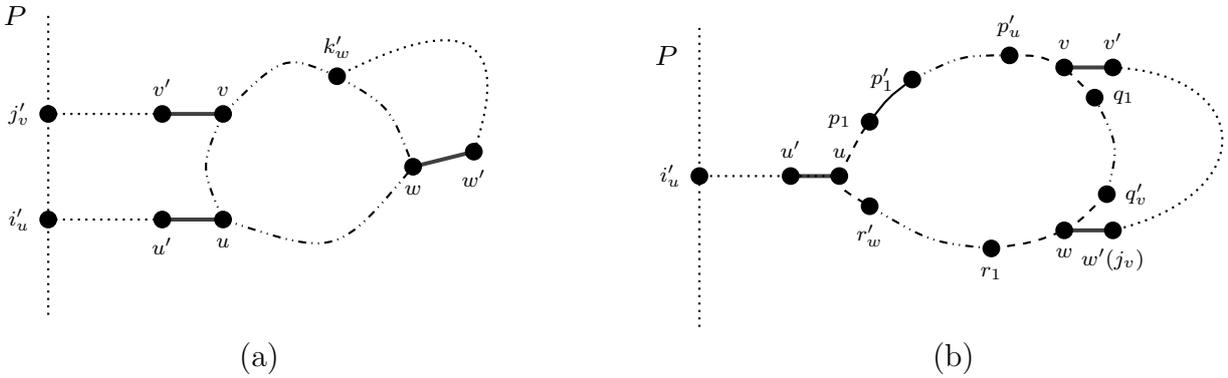
\begin{figure}[H]
			\begin{center}
				\begin{tabular*}{\textwidth}{@{\extracolsep{\fill}} cc}
					\begin{tikzpicture}
						\SetVertexStyle[MinSize=0.1,FillColor=black]
						\Vertex[y=0.7,label=$v$,position=90]{v}
						\Vertex[y=-0.7,label=$u$,position=-90]{u}
						\Vertex[x=-0.8,y=0.7,label=$v'$,position=90]{v'}
						\Vertex[x=-0.8,y=-0.7,label=$u'$,position=-90]{u'}
						\Edge(u)(u')
						\Edge(v)(v')
						\Vertex[x=-2.3,y=0.7,label=$j_v'$,position=180]{jv}
						\Vertex[x=-2.3,y=-0.7,label=$i_u'$,position=180]{iu}
						\Vertex[x=-2.3,y=-2,Pseudo]{a}
						\Vertex[x=-2.3,y=2,label=$P$,fontscale=1.3,position=180,Pseudo]{b}
						\draw[dotted,thick] (-2.3,2)--(-2.3,-2);
						\draw[dotted,thick] (-0.8,0.7)--(-2.3,0.7);
						\draw[dotted,thick] (-0.8,-0.7)--(-2.3,-0.7);
						\draw[dash dot dot,thick] (0,-0.7)..controls(-0.3,0)..(0,0.7);
						
						\Vertex[x=2.5,y=0,label=$w$,position=-90]{w}
						\Vertex[x=3.3,y=0.2,label=$w'$,position=-90]{w'}
						\Vertex[x=1.5,y=1.2,label=$k_w'$,position=90]{iw}
						
						\draw[dash dot dot,thick] (0,0.7)..controls(0.8,1.5)..(1.5,1.2);
						\draw[dash dot dot,thick] (0,-0.7)..controls(1.5,-1.2)..(2.5,0);
						\draw[dash dot dot,thick] (1.5,1.2)..controls(2.2,0.8)..(2.5,0);
						\Edge(w)(w')
						\draw[dotted,thick] (3.3,0.2)..controls(3.5,0.8) and (4,2.5)..(1.5,1.2);
					\end{tikzpicture} & \begin{tikzpicture}[scale=0.8]
						\SetVertexStyle[MinSize=0.1,FillColor=black]
						\Vertex[label=$u$,position=90]{u}
						\Vertex[x=0.5,y=0.9,label=$p_1$,position=180]{p1}
						\Vertex[x=1.2,y=1.6,label=$p_1'$,position=180]{p1'}
						\Vertex[x=2.8,y=2,label=$p_u'$,position=90]{pu}
						\Vertex[x=3.7,y=1.8,label=$v$,position=90]{v}
						\Vertex[x=4.2,y=1.3,label=$q_1$,position=0]{q1}
						\Vertex[x=4.4,y=-0.3,label=$q_v'$,position=0]{qv}
						\Vertex[x=3.7,y=-0.9,label=$w$,position=-90]{w}
						\Vertex[x=2.5,y=-1.2,label=$r_1$,position=-90]{r1}
						\Vertex[x=0.5,y=-0.5,label=$r_w'$,position=-90]{rw}
						
						\draw[dashed,thick] (0,0)..controls(0.35,0.7)..(0.5,0.9);
						\draw[solid,thick] (0.5,0.9)..controls(0.8,1.3)..(1.2,1.6);
						\draw[dash dot dot,thick] (1.2,1.6)..controls(2,2)..(2.8,2);
						\draw[dashed,thick] (0,0)..controls(0,-0.2)..(0.5,-0.5);
						\draw[dash dot dot,thick] (0.5,-0.5)..controls(1.5,-1.1)..(2.5,-1.2);
						\draw[dashed,thick] (2.5,-1.2)..controls(3.2,-1.1)..(3.7,-0.9);
						\draw[dashed,thick] (3.7,-0.9)..controls(3.9,-0.8)..(4.4,-0.3);
						\draw[dash dot dot,thick] (4.4,-0.3)..controls(4.6,0.5)..(4.2,1.3);
						\draw[dashed,thick] (4.2,1.3)..controls(4,1.5)..(3.7,1.8);
						\draw[dashed,thick] (2.8,2)..controls(3.3,2)..(3.7,1.8);
						
						\Vertex[x=4.5,y=1.8,label=$v'$,position=90]{v'}
						\Edge(v)(v')
						\Vertex[x=4.5,y=-0.9,label=$w'(j_v)$,position=-90]{w'}
						\Edge(w)(w')
						\draw[dotted,thick] (4.5,1.8)..controls(6.8,1.7)and(7,-0.3)..(4.5,-0.9);
						\Vertex[x=-0.8,label=$u'$,position=90]{u'}
						\Edge(u)(u')
						\Vertex[x=-2.3,label=$i_u'$,position=180]{iu}
						\draw[dotted,thick] (0,0)--(-2.3,0);
						\draw[dotted,thick] (-2.3,-2.5)--(-2.3,2.5);
						\Vertex[x=-2.3,y=2,label=$P$,fontscale=1.3,Pseudo,position=-180]{n}
					\end{tikzpicture}\\
					(a) & (b)
				\end{tabular*}
			\end{center}\caption{Possible structures corresponding to the cases $i_u',j_v'\in V(P)$ and only $i_u'\in V(P)$.}\label{ne}
		\end{figure}
	\end{proof}
	
%
%

	\begin{lem}\label{lem2.4}
		Let $B \in \B$ be a non-singular digraph with a perfect matching, and let $A(B)^{-1} = [b_{ij}]$. Suppose $\Gamma$ is an odd cycle in $B$ such that $w(\Gamma) = \pm 1$. Then $b_{ii} = 0$ for all $i \in V(B)$ if and only if $\Gamma$ has at least two pegs relative to every perfect matching of $B$.
	\end{lem}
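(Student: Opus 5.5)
By Remark~\ref{rem3}, $b_{ii}=0$ holds exactly when $\det A(B_i)=0$, where $B_i=B-i$. So I will compute $\det A(B_i)$ with Lemma~\ref{adjdet} and then translate its vanishing into a statement about pegs on $\Gamma$.

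\textbf{Step 1: reduce $\det A(B_i)$ to a matching count.} Since $B$ has a perfect matching, $|V(B_i)|$ is odd. Hence every spanning elementary subgraph of $B_i$ contains an odd cycle. By the discussion before Remark~\ref{det}, $\Gamma$ is the only cycle of $B$ of weight $\pm1$; every other cycle has weight $\pm\i$. So a contributing spanning elementary subgraph of $B_i$ contains no cycle other than $\Gamma$. Thus it consists of $\Gamma$ together with a perfect matching of $B-(\Gamma\cup\{i\})$, which forces $i\notin V(\Gamma)$. All such subgraphs have the same number of cycles and singular components, so they carry the same sign in Lemma~\ref{adjdet}. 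Therefore
\[
\det A(B_i)=\pm 2\,m_i ,
\]
where $m_i$ is the number of perfect matchings of $B-(\Gamma\cup\{i\})$, and $m_i=0$ if $i\in V(\Gamma)$. Consequently, $b_{ii}=0$ for all $i$ if and only if $\Gamma$ is not independent in $B-i$ for every vertex $i$.

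\textbf{Step 2 (sufficiency).} Because $\Gamma$ is odd, it has an odd number of pegs relative to any perfect matching. So ``at least two pegs'' means ``at least three pegs''. Apply Lemma~\ref{span} with $P$ the trivial path consisting of the single vertex $i$. It gives that $\Gamma$ is not independent in $B-i$, so $m_i=0$ and $b_{ii}=0$. The only point to check is that Lemma~\ref{span} and Remark~\ref{rem4} allow a one-vertex path. Their arguments only use that $V(P)$ is a vertex set reached by \al paths, so they go through verbatim.

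\textbf{Step 3 (necessity), by contrapositive.} Suppose some perfect matching $\M$ has fewer than two pegs on $\Gamma$. By parity it has exactly one peg $[v,v']$ with $v\in V(\Gamma)$.
\begin{enumerate}[(a)]
\item The peg is not a chord. If it were, the remaining $|\Gamma|-2$ vertices of $\Gamma$ (an odd number) would all have to be matched along edges of $\Gamma$, which is impossible. Hence $v'\notin V(\Gamma)$.
\item Every vertex of $V(\Gamma)\setminus\{v\}$ is matched by $\M$ along an edge of $\Gamma$.
\item Deleting these cycle edges and $[v,v']$ from $\M$ leaves a perfect matching of $B-(\Gamma\cup\{v'\})$.
\end{enumerate}
So $m_{v'}\ge1$, Step 1 gives $\det A(B_{v'})\neq0$, and hence $b_{v'v'}\neq0$.

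\textbf{Main obstacle.} I expect the delicate point to be Step 1: excluding every contributing subgraph of $B_i$ other than those built on $\Gamma$, and confirming they all contribute with one sign so that no cancellation occurs. This relies on the fact that exactly one cycle of a bicyclic 3-colored digraph has real weight. A secondary concern is the legitimacy of the one-vertex path in Lemma~\ref{span}. If that is doubted, I would repeat its argument directly with $V(P)=\{i\}$.
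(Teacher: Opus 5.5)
Your proposal is correct and is essentially the paper's proof: sufficiency by applying Lemma~\ref{span} to $B-i$ (i.e.\ to the one-vertex path $\{i\}$), and necessity by turning a perfect matching with a single peg $[v,v']$ into a contributing spanning elementary subgraph of $B-v'$ built on $\Gamma$. Your explicit formula $\det A(B_i)=\pm 2m_i$ is in fact a little more careful than the paper, which calls this subgraph ``unique'' even though $B-(\Gamma\cup\{v'\})$ may have several perfect matchings (e.g.\ when the other, even, cycle is independent in it).

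One caveat, which you share with the paper: pegs include chords, and an odd $\Gamma$ in $\B(\theta)$ has a chord when the third path $\mathcal{P}_k$ is a single edge, so the number of pegs need not be odd. Hence ``at least two'' $\Rightarrow$ ``at least three'' needs one extra observation: a matching that uses that chord together with exactly one outgoing peg can be switched along the even alternating cycle through the chord, giving a perfect matching with exactly one peg.
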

	\begin{proof}
		Let $B_i = B - i$ for a vertex $i$. Let $\Gamma$ be an odd cycle in $B$ with $w(\Gamma) = \pm 1$.
		
		First, assume that $\Gamma$ has at least two pegs relative to every perfect matching of $B$. Since $\Gamma$ is an odd cycle, it must contain at least three pegs. By Lemma~\ref{span}, $\Gamma$ is not independent in $B_i$ for any $i \in V(B)$. Moreover, $B_i$ has no other contributing spanning elementary subgraph, as $|V(B_i)|$ is odd and the weights of all cycles distinct from $\Gamma$ are $\pm \i$. Hence, by Remark~\ref{rem3}, $b_{ii} = 0$ for all $i \in V(B)$.
		
		Conversely, assume that $b_{ii} = 0$ for all $i \in V(B)$. Suppose, for the sake of contradiction, that $B$ admits a perfect matching $\M$ relative to which $\Gamma$ has exactly one peg $[v, v']$ with $v \in V(\Gamma)$. Then $\Gamma$ together with the edges $\M \setminus (E(\Gamma) \cup [v,v'])$ constitutes the unique contributing spanning elementary subgraph in $B_{v'}$. Consequently, $\det A(B_{v'}) \neq 0$, which by Remark~\ref{rem3} implies $b_{v'v'} \neq 0$—a contradiction.
	\end{proof}

	For non-singular digraphs in $\B$ that do not have a perfect matching, we have the following lemma.
	
	\begin{lem}\label{lem2.2}
		Let $B \in \B$ be a non-singular digraph without a perfect matching, and let $A(B)^{-1} = [b_{ij}]$. Then there exists at least one vertex $u \in V(B)$ such that $b_{uu} \neq 0$.
	\end{lem}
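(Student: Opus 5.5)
By Lemma~\ref{noper}, since $B$ is non-singular and has no perfect matching, $B$ contains an independent cycle $\Gamma$ with $w(\Gamma)=\pm1$. By Remark~\ref{rem1}, $\Gamma$ is odd. Hence $|V(B)|$ is odd, because $V(B)$ is the disjoint union of the odd set $V(\Gamma)$ and the vertex set of a perfect matching $\M_0$ of $B-\Gamma$. The idea is to delete a vertex of $\Gamma$ and apply Remark~\ref{rem3}: pick $u\in V(\Gamma)$ and set $B_u=B-u$. We will show $\det A(B_u)\neq 0$, so $b_{uu}\neq 0$.

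The key steps are as follows. First, $B_u$ has a perfect matching: $\Gamma-u$ is a path on an even number of vertices, so it has a perfect matching $M_u$, and $\M_0\cup M_u$ is a perfect matching of $B_u$. Second, $B_u$ has at most one cycle. Each cycle of $B$ through $u$ is destroyed, and $\Gamma$ passes through $u$. In the $\infty$-type case at most the other cycle $\Gamma'$ survives. In the $\theta$-type case, we should choose $u$ in the interior of the path that $\Gamma$ shares with another cycle; if some $\mathcal{P}_k$ lies in $\Gamma$ and has an internal vertex, take $u$ there, so that two of the three cycles are destroyed. If the shared path is a single edge, take $u$ to be one of its endpoints $i$ or $j$, which kills all three cycles. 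In every case, the surviving cycle, if any, is not $\Gamma$.

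Third, compute $\det A(B_u)$. Any cycle surviving in $B_u$ has weight $\pm\i$, since exactly one cycle of $B$ has weight $\pm1$ and that cycle is $\Gamma$, which was destroyed. If $B_u$ is a tree or a unicyclic digraph whose cycle has weight $\pm\i$, then Corollary~\ref{npm} gives $\det A(B_u)=\pm m_0(B_u)$. This is nonzero because $B_u$ has a perfect matching. If $B_u$ is disconnected, the same reasoning applies componentwise; alternatively, one applies Corollary~\ref{npm} directly to all of $B_u$, since every cycle of $B_u$ has weight $\pm\i$.

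The main obstacle is making sure no cycle of weight $\pm1$ survives in $B_u$. This is handled by the observation that $\Gamma$ is the unique cycle of weight $\pm1$ in $B$ and $u\in V(\Gamma)$. With that, any choice of $u\in V(\Gamma)$ works, and the careful choice of $u$ in the $\theta$-type case is not even needed. So the proof reduces to Lemma~\ref{noper}, Corollary~\ref{npm} and Remark~\ref{rem3}: $\det A(B_u)=\pm m_0(B_u)\neq0$, hence $b_{uu}\neq0$.
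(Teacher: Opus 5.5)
Your proposal is correct and follows essentially the same route as the paper. Lemma~\ref{noper} supplies an independent cycle $\Gamma$ of weight $\pm1$. Deleting any $u\in V(\Gamma)$ leaves $B-u$ with a perfect matching, and every surviving cycle has weight $\pm\i$. Hence $\det A(B-u)=\pm m_0(B-u)\neq 0$, and Remark~\ref{rem3} gives $b_{uu}\neq 0$.

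As you note yourself, the case analysis on choosing $u$ in the $\theta$-type case is unnecessary. Your count $m_0(B_u)$ is slightly more accurate than the paper's wording, which attributes $m_0$ to $B-\Gamma$.
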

	\begin{proof}
		Since $B$ is non-singular and has no perfect matching, it follows from Lemma~\ref{noper} that $B$ contains an odd independent cycle $\Gamma$ with $w(\Gamma)=\pm1$. Let $\Gamma=[1,\dots,k,1]$, and let $\M$ be a perfect matching of $B-\Gamma$. For any $u\in V(\Gamma)$, $\Gamma-u$ has a perfect matching, which together with $\M$ forms a perfect matching of $B_{u}=B-u$. 
		
		Furthermore, $B_u$ does not contain any contributing spanning elementary subgraph that includes a cycle, since the weights of all cycles other than $\Gamma$ are $\pm\i$. Therefore, by Remark~\ref{detn}, we have $\det A(B_u)=m_0\times (-1)^{\frac{|V(B_u)|}{2}}\neq0$, where $m_0$ is the number of perfect matchings of $B-\Gamma$.
	\end{proof}
	
	The following theorem is the major result of this section.
	
	\begin{thm}\label{invertible}
		Let $B\in\B$ be non-singular, and let $A(B)=[b_{ij}]$. Then $b_{ii}=0$ for all $i\in V(B)$ if and only if $B$ has a perfect matching and $B$ does not contain an odd cycle of weight $\pm1$ that has exactly one peg relative to some perfect matching of $B$.
	\end{thm}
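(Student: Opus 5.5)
The plan is to assemble the theorem from Lemmas~\ref{lem2.1}, \ref{lem2.3}, \ref{lem2.4} and \ref{lem2.2}, by cases according to whether $B$ has a perfect matching and whether $B$ contains an odd cycle of weight $\pm1$. (In the statement, $[b_{ij}]$ should of course be read as $A(B)^{-1}$.)

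For the forward direction, assume $b_{ii}=0$ for all $i\in V(B)$.
\begin{enumerate}[1.]
\item If $B$ had no perfect matching, Lemma~\ref{lem2.2} would produce a vertex $u$ with $b_{uu}\neq0$. So $B$ has a perfect matching.
\item Suppose $B$ contains an odd cycle $\Gamma$ with $w(\Gamma)=\pm1$. By Lemma~\ref{lem2.4}, $\Gamma$ has at least two pegs relative to every perfect matching of $B$.
\item Hence no odd cycle of weight $\pm1$ has exactly one peg relative to any perfect matching, which is the required condition.
\end{enumerate}

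For the converse, assume $B$ has a perfect matching and no odd cycle of weight $\pm1$ has exactly one peg relative to any perfect matching.
\begin{enumerate}[1.]
\item If $B$ has no odd cycle of weight $\pm1$, Lemma~\ref{lem2.3} gives $b_{ii}=0$ for all $i$ directly.
\item Otherwise let $\Gamma$ be such a cycle. As observed after the definition of a peg, an odd cycle has an odd number of pegs relative to any perfect matching.
\item Since that number is never exactly one, it is at least three for every perfect matching of $B$. In particular it is at least two, so Lemma~\ref{lem2.4} yields $b_{ii}=0$ for all $i$.
\end{enumerate}

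The step needing most care is the reliance on Lemma~\ref{lem2.4}. It treats a single odd cycle of weight $\pm1$, so I should check that such a cycle is unique in $B$. This holds because, as noted before Remark~\ref{det}, exactly one cycle of a digraph in $\B$ has weight $\pm1$. The hypotheses of Lemma~\ref{lem2.4} (non-singular, with a perfect matching) are available in both directions: in the forward direction once step 1 is done, and in the converse by assumption.

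Beyond this, the argument is bookkeeping with the parity fact about pegs. That fact holds because $\M$ restricted to $V(\Gamma)$ covers each vertex exactly once, and the edges of $\M$ with both ends on $\Gamma$ use an even number of vertices. (One should interpret a chord peg as contributing to this count appropriately, but since $B$ is bicyclic and $\Gamma$ odd, this does not affect the conclusion.)
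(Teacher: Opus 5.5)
Your proposal is correct and is essentially the paper's proof, which simply combines Lemmas~\ref{lem2.1}, \ref{lem2.3}, \ref{lem2.4} and \ref{lem2.2}; dropping Lemma~\ref{lem2.1} is harmless, since it is subsumed by Lemma~\ref{lem2.3}. One refinement for the converse: you only need that an odd cycle always has at least one peg (some vertex of $\Gamma$ must be matched off $\Gamma$), so ``not exactly one'' already gives ``at least two'' — the parity claim you route through can fail when a chord of $\Gamma$ lies in the matching, which does occur in $\B(\theta)$ and can leave an odd cycle with exactly two pegs, so the ``at least three'' step is not justified in general.
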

	\begin{proof}
		The result follows from Lemma~\ref{lem2.1}, Lemma~\ref{lem2.3}, Lemma~\ref{lem2.4}, and Lemma~\ref{lem2.2}.
	\end{proof}
	
	\section{Inverse of digraphs}\label{4}
	
	In this section, we study digraph invertibility using the results developed in the preceding sections. In particular, we aim to identify digraphs in $\mathscr{U}$ and $\mathscr{B}$ whose inverses are also 3-colored digraphs.
	
	\begin{defn}
		A non-singular digraph $G$ is said to be \textbf{invertible} if the diagonal entries of $A(G)^{-1}$ are all zeros. The \textbf{inverse} of an invertible digraph $G$ is the digraph $G^+$ whose adjacency matrix is $A(G)^{-1}$.
	\end{defn}

%

	It is natural to ask: \textit{Is the inverse of an invertible 3-colored digraph necessarily a 3-colored digraph?} The answer is, in general, \textit{no}. Indeed, the inverse of an invertible digraph $G$ with $\det A(G) \neq \pm 1$ may fail to be a 3-colored digraph. Consequently, to proceed with the characterization, it is essential to identify all invertible digraphs in $\mathscr{U}$ and $\B$ whose determinant equals $\pm 1$.
	
	\begin{defn}
		A graph (digraph) $G$ is said to be \textbf{unimodular} if $\det A(G)=\pm1$.
	\end{defn}

	The following two theorems characterizes unimodular digraphs in $\mathscr{U}$ and $\B$, respectively.
	
	\begin{thm}\label{uniuni}
		Let $U\in\mathscr{U}$. Then $U$ is unimodular if and only if $U$ has a unique perfect matching.
	\end{thm}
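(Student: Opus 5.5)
The plan is to reduce everything to the determinant formula already established for $\mathscr{U}$. Let $\Gamma$ be the unique cycle of $U$. By the standing assumption, $w(\Gamma)=\pm\i$. Corollary~\ref{npm} then gives $\det A(U)=\pm m_0$, where $m_0$ is the number of perfect matchings of $U$; this is exactly the computation in the proof of Theorem~\ref{unons}. Hence $U$ is unimodular if and only if $|\det A(U)|=1$, which holds if and only if $m_0=1$, that is, if and only if $U$ has a unique perfect matching.

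In detail, I would write the two directions separately.

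\emph{Sufficiency.} If $U$ has a unique perfect matching, then $m_0=1$. The cycle $\Gamma$ cannot appear in any contributing spanning elementary subgraph, since $w(\Gamma)=\pm\i$. So the only contributing spanning elementary subgraph is the perfect matching itself, and Lemma~\ref{adjdet} gives $\det A(U)=(-1)^{n/2}=\pm1$.

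\emph{Necessity.} If $\det A(U)=\pm1$, then $\pm m_0=\pm1$ forces $m_0=1$. In particular $U$ has a perfect matching, which is consistent with Theorem~\ref{unons}.

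There is no real obstacle here; the result is a direct corollary of Corollary~\ref{npm}. The only point needing care is the standing convention that the unique cycle of $U$ has weight $\pm\i$. Without it, a cycle of weight $\pm1$ would add the term $-2m_1 w(\Gamma)(-1)^{(n+|\Gamma|)/2}$ from Remark~\ref{detn}. That term could make the determinant equal to $\pm1$ with $m_0\neq1$, for instance $m_0=0$ with an odd independent cycle of weight $\pm1$ gives $\det A(U)=\pm2m_1$, or $m_0=2$ cancelling against $2m_1$ gives $\det A(U)=0$. So I will state explicitly that the assumption $w(\Gamma)=\pm\i$ is in force, as it was in Theorem~\ref{unons}.
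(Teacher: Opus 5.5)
Your argument is correct and matches the paper's proof: with the unique cycle of weight $\pm\i$, Corollary~\ref{npm} gives $\det A(U)=m_0(-1)^{|V(U)|/2}$, so unimodularity is equivalent to $m_0=1$. Your closing caveat is inaccurate, though. Your own examples give $\pm2m_1$ and $0$, not $\pm1$. In fact the weight assumption is not needed for this statement at all: $\det A(U)\equiv m_0 \pmod 2$ by Remark~\ref{detn}, a unicyclic graph has at most two perfect matchings, and $m_0=1$ forces the cycle to be non-independent, so $m_1=0$.
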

	\begin{proof}
		The result follows directly from the fact that $\det A(U)=m_0\times (-1)^{\frac{|V(U)|}{2}}$, where $m_0=\text{number of perfect matchings of $U$}$.
	\end{proof}

	\begin{thm}\label{uni}
		Let $B\in\B$. Then $B$ is unimodular if and only if one of the following conditions holds:
		\begin{enumerate}
			\item $B$ has a unique perfect matching.
			\item $B\in\B(\theta)$ has a perfect matching such that every cycle in $B$ is independent, and there exists a cycle $\Gamma$ satisfying $w(\Gamma)(-1)^{\frac{|\Gamma|}{2}}=1$.
		\end{enumerate}
	\end{thm}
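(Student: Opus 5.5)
The plan is a case analysis on the number of perfect matchings and independent cycles of $B$, reading off $\det A(B)$ in each case from the formulas already established. There are four cases.

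\emph{No perfect matching.} If $B$ is non-singular, then by the proof of Lemma~\ref{noper},
\[
\det A(B) = -2w(\Gamma)m(-1)^{\frac{n+|\Gamma|}{2}},
\]
which is even, so $|\det A(B)|\ge 2$. If $B$ is singular, $\det A(B)=0$. Either way $B$ is not unimodular.

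\emph{Unique perfect matching.} By Remark~\ref{rem1} there are no independent cycles. Hence $m_{1_i}=0$ for all $i$ in Remark~\ref{det}, and $\det A(B)=(-1)^{n/2}$, so $B$ is unimodular.

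\emph{More than one perfect matching, exactly one independent cycle $\Gamma$.} Lemma~\ref{1ind} gives
\[
\det A(B)=2(-1)^{n/2}\left[1-w(\Gamma)(-1)^{|\Gamma|/2}\right] \quad\text{or}\quad \det A(B)=2(-1)^{n/2}.
\]
These values lie in $\{0,\pm2,\pm4\}$, so $B$ is never unimodular.

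\emph{More than one perfect matching, more than one independent cycle.} Here we split by type.
\begin{itemize}
\item If $B\in\Bi$, then by Lemma~\ref{lem2} the two cycles are vertex-disjoint and both independent. Lemma~\ref{2ind} gives $\det A(B)\in\{0,\pm4,\pm8\}$, so $B$ is not unimodular.
\item If $B\in\Bt$, then by Lemma~\ref{lem} all three cycles are independent. The computation in Lemma~\ref{thetaperf2} gives
\[
\det A(B)=(-1)^{n/2}\left[3-2w(\Gamma_1)(-1)^{|\Gamma_1|/2}\right],
\]
where $\Gamma_1$ is the unique cycle of weight $\pm1$. Since all cycles are even here, $(-1)^{|\Gamma_1|/2}=\pm1$. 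The determinant is therefore $\pm1$ exactly when $w(\Gamma_1)(-1)^{|\Gamma_1|/2}=1$, and $\pm5$ otherwise.
\end{itemize}
The cycles of weight $\pm\i$ can never satisfy $w(\Gamma)(-1)^{|\Gamma|/2}=1$, since that product is imaginary. So the existence of some cycle $\Gamma$ with $w(\Gamma)(-1)^{|\Gamma|/2}=1$ is equivalent to the condition on $\Gamma_1$. This gives condition 2.

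\emph{Converse.} Condition 1 falls under the unique-matching case. Condition 2 falls under the $\theta$-type, all-independent case, where the hypothesis forces $\det A(B)=\pm1$.

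\emph{Main obstacle.} The main care point is the exhaustiveness of the case split. Remark~\ref{det} is valid only because a spanning elementary subgraph never carries two real-weight cycles. In addition, Lemmas~\ref{lem2} and \ref{lem} must cover all configurations of independent cycles, including the $\infty$-type case with disjoint cycles and only one of them independent, which is absorbed into the single-independent-cycle case above. I would double-check these before assembling the cases.
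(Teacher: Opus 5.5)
Your proposal is correct and follows the same route as the paper: the paper gets $\det A(B)=(-1)^{n/2}$ in the unique-perfect-matching case via Remark~\ref{rem1} and Remark~\ref{det}, handles the $\theta$-type all-independent case with Lemma~\ref{thetaperf2}, and rules out every other configuration using Lemmas~\ref{noper}, \ref{1ind}, \ref{2ind} and \ref{thetaperf2}. Your explicit case split spells out what the paper's short proof leaves implicit: Lemmas~\ref{lem2} and \ref{lem} make the cases exhaustive, and only the real-weight cycle can satisfy $w(\Gamma)(-1)^{|\Gamma|/2}=1$.
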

	\begin{proof}
		First, assume that $B$ has a unique perfect matching. By Remark~\ref{rem1}, $B$ contains no independent cycle, and consequently, it has no spanning elementary subgraph containing exactly one cycle. Therefore, by Remark~\ref{det}, we obtain $\det A(B)=(-1)^{\frac{|V(B)|}{2}}$. The second case follows directly from Lemma~\ref{thetaperf2}.
		
		The converse is a direct consequence of Lemma \ref{noper}, Lemma \ref{1ind}, Lemma \ref{2ind}, and Lemma \ref{thetaperf2}.
	\end{proof}

	Next lemma is crucial for further investigations.
	
	\begin{lem}\label{uniqe=3c}
		Let $G$ be an invertible 3-colored digraph. If $G$ contains exactly one independent even cycle of weight $\pm\i$, then $G^+$ is not a 3-colored digraph.
	\end{lem}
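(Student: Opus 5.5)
The approach is to show that $\det A(G)=\pm 2$ and then to exhibit an off-diagonal entry $b_{uv}$ of $A(G)^{-1}$ whose numerator in Lemma~\ref{inventry} has modulus $\sqrt2$. That gives $|b_{uv}|=1/\sqrt2\notin\{0,1\}$, so $A(G)^{-1}$ is not the adjacency matrix of a 3-colored digraph.

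\emph{Step 1 (spanning elementary subgraphs and $\det A(G)$).} Let $\Gamma$ be the unique independent even cycle, with $w(\Gamma)=\pm\i$, and let $M_1,M_2$ be the two perfect matchings of $\Gamma$. Since $\Gamma$ is independent and even, $G$ has a perfect matching, and by Proposition~\ref{mperf} it has more than one. Exactly as in Lemma~\ref{123}, $G-\Gamma$ has a unique perfect matching $\M_0$, since otherwise a second independent cycle would appear. The perfect matchings of $G$ are then exactly $\M_0\cup M_1$ and $\M_0\cup M_2$. Any further perfect matching would, by the symmetric-difference argument of Proposition~\ref{mperf}, produce an even independent cycle different from $\Gamma$.

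Every other spanning elementary subgraph must contain a cycle, and the plan is to show that none of them is contributing. A spanning elementary subgraph containing an even cycle $\Gamma'$ makes $\Gamma'$ independent: replace every other even cycle by one of its matchings. Hence $\Gamma'=\Gamma$, which has weight $\pm\i$. The remaining possibility is a subgraph containing odd cycles of weight $\pm1$. In the classes $\mathscr{U}$ and $\B$ at most one cycle has weight $\pm1$. A subgraph built from one odd cycle plus edges cannot be spanning, because $|V(G)|$ is even. Using these parity facts we conclude $\det A(G)=2(-1)^{n/2}$ by Remark~\ref{detn}.

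\emph{Step 2 (choice of entry).} Pick an edge $[u,v]\in M_1$ of $\Gamma$, and let $Q$ be the other $u\leadsto v$ path along $\Gamma$. We compute $b_{uv}$ by Lemma~\ref{inventry}; two paths contribute.
\begin{itemize}
\item For $P=[u,v]$, the graph $G-P$ has the perfect matching $(\M_0\cup M_1)\setminus\{[u,v]\}$. This should be the only contributing spanning elementary subgraph: $G-\{u,v\}$ contains no cycle through $u$ or $v$, and any second perfect matching of it would create a second independent even cycle of $G$.
\item For $P=Q$, we have $G-Q=G-\Gamma$, which has the unique perfect matching $\M_0$.
\end{itemize}
Every other $u\leadsto v$ path leaves a graph without a contributing spanning elementary subgraph. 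The reason is that an $\M$-alternating argument, as in Remark~\ref{rem4}, would otherwise yield a further independent even cycle or a cycle of weight $\pm1$ with one peg.

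The numerator is therefore $\pm w(u,v)\pm w(Q)$. Since $w(Q)=w(\Gamma)\,\overline{w(u,v)}$ up to sign, we have $w(Q)=\pm\i\, w(u,v)\cdot(\text{unit})$. So the two terms are unit complex numbers differing by a factor $\pm\i$, and their sum has modulus $\sqrt2$. Dividing by $\det A(G)=\pm2$ gives $|b_{uv}|=1/\sqrt2$, so $G^+$ is not 3-colored.

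\emph{Main obstacle.} The hardest part is the exclusivity claims: that no other spanning elementary subgraph of $G$, of $G-\{u,v\}$, or of $G-P$ for other paths $P$ is contributing. This is exactly where the structure of $\mathscr{U}\cup\B$ (few cycles, at most one of weight $\pm1$) must be invoked. If a general argument fails, I would restrict to $G\in\mathscr{U}\cup\B$, which is the setting in which the lemma is used, and argue case by case with Lemma~\ref{lem2} and Lemma~\ref{lem}.
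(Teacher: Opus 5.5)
Your proposal is correct and follows essentially the paper's route. You pick an edge $[u,v]$ of $\Gamma$ and note that only the paths $[u,v]$ and $\Gamma-[u,v]$ contribute to $b_{uv}$ in Lemma~\ref{inventry}, each leaving a graph with a unique perfect matching. You then use $w(\Gamma)=\pm\i$ to see that the two terms differ by a factor $\pm\i$, so $b_{uv}$ is a unit times $(1\pm\i)/\det A(G)$.

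Your Step~1 value $\det A(G)=\pm2$ is not needed, since any nonzero integer determinant suffices. Your allowance for extra $u\leadsto v$ paths is more careful than the paper, which claims there are exactly two. In the $\theta$-type case a third path $R=\Gamma'-[u,v]$ does exist, but it is harmless: $G-R$ is a forest with no perfect matching, because $\Gamma'$ is not independent.
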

	\begin{proof}
		Let $\Gamma = [1,2,\dots,k,1]$ be the unique independent even cycle in $G$. By Lemma~\ref{mperf}, $G - \Gamma$ has a perfect matching $\M$. Since $\Gamma$ is the unique independent cycle in $G$, it follows that $\M$ is the unique perfect matching of $G - \Gamma$. Let $A(G)^{-1} = [b_{ij}]$. We show that $b_{12} \notin \{\pm 1, \pm \i\}$.
		
		Observe that there are exactly two $1 \leadsto 2$ paths, namely $P = [1,2]$ and $Q = [1,k,\dots,2]$. Since $w(\Gamma) = \pm \i$ and $w(\Gamma) = w(P) w(Q)$, we have $w(P) = \pm \i w(Q)$. Moreover, both $G - P$ and $G - Q$ have exactly one perfect matching (say $\M_P$ and $\M_Q$, respectively), as neither contains an independent cycle. Hence, by Lemma~\ref{inventry}, $$b_{12}=\frac{1}{\det A(G)}\big[w(P)(-1)^{|V(G)|-|\M_P|+|P|}+w(Q)(-1)^{|V(G)|-|\M_Q|+|Q|}\big],$$
		which equals $\pm \frac{1}{\det A(G)} (1 \pm \i)$. Hence, $G^+$ is not a $3$-colored digraph.
	\end{proof}
	
	In the following theorem, we provide a complete characterization of digraphs $U \in \mathscr{U}$ for which $U^+$ is a $3$-colored digraph. This result generalizes Theorem~3.10 in \cite{DK3}.
	
	\begin{thm}\label{uni3in}
		Let $U \in \mathscr{U}$ be invertible, and let $\Gamma$ denote its unique cycle. Then $U^+$ is a 3-colored digraph if and only if $U$ has a unique perfect matching and the number of pegs on $\Gamma$ is not equal to two.
	\end{thm}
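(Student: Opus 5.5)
We use the fact that an invertible $U\in\mathscr{U}$ is non-singular, so by Theorem~\ref{unons} it has a perfect matching. Also, by Theorem~\ref{unibii}, the diagonal condition holds automatically. So $U^+$ is a 3-colored digraph exactly when every off-diagonal entry $b_{ij}$ of $A(U)^{-1}$ lies in $\{0,\pm1,\pm\i\}$. The plan is to first force uniqueness of the perfect matching, and then to translate the entries $b_{ij}$ into a count of alternating paths, which we control through the pegs on $\Gamma$.

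\emph{Step 1 (necessity of a unique perfect matching).} Suppose $U$ has more than one perfect matching. By Proposition~\ref{mperf}, $U$ has an independent even cycle, and this cycle must be $\Gamma$. By our standing assumption, $w(\Gamma)=\pm\i$. Since $\Gamma$ is the only cycle, it is the unique independent even cycle. Lemma~\ref{uniqe=3c} then shows that $U^+$ is not 3-colored. Hence we may assume from now on that $U$ has a unique perfect matching $\M$. Then $\det A(U)=\pm1$ by Theorem~\ref{uniuni}, and $\Gamma$ is not independent by Remark~\ref{rem1}.

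\emph{Step 2 (entries via alternating paths).} Since $w(\Gamma)=\pm\i$, no contributing spanning elementary subgraph contains $\Gamma$. So in Lemma~\ref{inventry} only perfect matchings of $U-P$ contribute. We will show that $U-P$ has a perfect matching if and only if $P$ is \al relative to $\M$, and that this matching is then unique. Uniqueness holds because a second matching of $U-P$ would require $\Gamma$ to be independent in $U-P$, and gluing the alternate $\M$-edges of $P$ would then make $\Gamma$ independent in $U$, which is impossible. The forward direction follows the tree argument: an endpoint of $P$ cannot be matched off $P$ if its $\M$-partner lies on $P$, and we propagate along $P$. Consequently
\[
b_{ij}=\pm\sum_{P}\pm w(P),
\]
where the sum runs over the \al $i\leadsto j$ paths. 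There are at most two such paths, since two $i\leadsto j$ paths in a unicyclic graph differ by going around the two arcs of $\Gamma$. If there are two, say $P$ and $Q$, then $w(P)\overline{w(Q)}=\pm w(\Gamma)=\pm\i$. As in Lemma~\ref{uniqe=3c}, this gives $b_{ij}=\pm w(P)(1\pm\i)\notin\{\pm1,\pm\i\}$. Otherwise $b_{ij}\in\{0,\pm1,\pm\i\}$. So $U^+$ is 3-colored if and only if no pair $i,j$ admits two \al paths.

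\emph{Step 3 (two alternating paths $\iff$ exactly two pegs).} This is the main combinatorial step. If $\Gamma$ has exactly two pegs $[u,u']$ and $[v,v']$, with $u,v\in V(\Gamma)$ and no chords since $U$ is unicyclic, then every other vertex of $\Gamma$ is matched along $\Gamma$. The pegs split $\Gamma$ into two $u\leadsto v$ arcs, each alternating with non-$\M$ end edges. Hence $[u',u,\text{arc},v,v']$ gives two distinct \al $u'\leadsto v'$ paths.

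Conversely, let $P\neq Q$ be \al $i\leadsto j$ paths. They agree outside $\Gamma$ and separate at two vertices $u,v\in V(\Gamma)$. At $u$, the two paths leave along different $\Gamma$-edges, and alternation forces the $\M$-edge at $u$ to lie off $\Gamma$. This needs care when $u=i$: both paths start with the $\M$-edge at $i$, so that edge cannot be a $\Gamma$-edge used by only one arc. Thus the $\M$-edges at $u$ and $v$ are pegs. Alternation along each arc forces every interior vertex of the arc to be matched within the arc, so there are no further pegs. Hence there are exactly two pegs.

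Combining Steps 1--3 proves the theorem. Note that the count zero is impossible: an even $\Gamma$ with no pegs would be independent, and an odd $\Gamma$ has an odd number of pegs. So "not two" means either one peg, or at least three pegs. I expect the delicate point to be Step 2's claim that the only paths with $U-P$ perfectly matchable are the \al ones, together with the endpoint-on-$\Gamma$ subcase of Step 3.
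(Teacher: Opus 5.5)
Your proposal is correct and follows essentially the paper's route: Lemma~\ref{uniqe=3c} forces a unique perfect matching, and then each $b_{ij}$ is read off from the \al $i\leadsto j$ paths. Two such paths exist exactly when $\Gamma$ has two pegs, and in that case $b_{ij}=\pm w(P)(1\pm\i)$. Your Steps~2 and~3 are more careful than the paper, which leaves implicit that $U-P$ has a perfect matching only for \al $P$, and merely asserts that $P\Delta Q$ has ``no pegs or exactly two''. The cleanest justification for your ``tree argument'' is simply that a perfect matching of $U-P$, together with the perfect matching of the even-order path $P$, is a perfect matching of $U$ and hence equals $\M$.
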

	
	\begin{proof}
		Let $A(U)^{-1} = [b_{ij}]$, and let $\Gamma$ denote the unique cycle in $U$.
		
		First, assume that $U$ has a unique perfect matching and that the number of pegs on $\Gamma$ is different from two. We claim that between any two vertices $i$ and $j$ in $U$, there exists at most one $i \leadsto j$ \al path. Suppose, to the contrary, that there exist two distinct $k \leadsto l$ \al paths $P$ and $Q$ for some vertices $k$ and $l$ in $U$. Then the symmetric difference $P \Delta Q$ forms an even cycle in $U$ having either no pegs or exactly two pegs. Since the number of pegs on $\Gamma$ is not two, it follows that $\Gamma$ has no pegs and hence is an independent even cycle in $U$. By Lemma~\ref{mperf}, $U$ would then possess more than one perfect matching, contradicting the assumption that $U$ has a unique perfect matching. Therefore, our claim holds: for any $i,j \in V(U)$, there exists at most one $i \leadsto j$ \al path in $U$.
		
		Consequently, by Lemma~\ref{inventry},
		$
		b_{ij} = \pm w(P)(-1)^{|V(U)|-|\M_P|+|P|} \in \{\pm 1, \pm \i\},
		$ if such a path $P$ exists, otherwise; $b_{ij} = 0$. Hence, $U^+$ is a $3$-colored digraph.
		
		Conversely, assume that $U^+$ is a $3$-colored digraph. By Theorem~\ref{unibii}, $U$ possesses a perfect matching, and by Lemma~\ref{uniqe=3c}, $U$ does not contain any independent cycle. As a result, $U$ has a unique perfect matching. To prove the second claim, suppose, to the contrary, that $\Gamma$ contains exactly two pegs, say $[x,x']$ and $[y,y']$, where $x,y \in V(\Gamma)$. Then there exist two $x' \leadsto y'$ \al paths $P$ and $Q$ in $U$. Since $w(P)w(Q) = w(\Gamma) = \pm \i$, we have $w(P) = \pm \i\, w(Q)$. Therefore,
		\[
		b_{x'y'} =
		\pm\Big(
		w(P)(-1)^{|V(U)|-|\M_P|+|P|}
		+
		w(Q)(-1)^{|V(U)|-|\M_Q|+|P|}
		\Big)
		= \pm(1 \pm \i),
		\]
		which contradicts the assumption that $U^+$ is a $3$-colored digraph.
		
	\end{proof}
	
	Next, we proceed to the identification of $3$-colored digraphs $B \in \B$ such that the inverse $B^+$ is also a $3$-colored digraph. We first require the following lemma.

	\begin{lem}\label{mmal}
		Let $B \in \B$ be invertible. Then $B - P$ has a contributing spanning elementary subgraph if and only if $P$ is an \al path relative to some perfect matching of $B$.
	\end{lem}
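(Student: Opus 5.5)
The plan is to prove the two implications separately. Throughout we use that $B$ is invertible, so by Theorem~\ref{invertible} $B$ has a perfect matching. We also use the fact noted in Section~\ref{u} that at most one cycle of $B$ has weight $\pm1$.

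\emph{($\Leftarrow$)} Suppose $P=[i_1,\dots,i_n]$ is \al relative to a perfect matching $\M$ of $B$. By definition $[i_1,i_2],[i_3,i_4],\dots,[i_{n-1},i_n]\in\M$, so every vertex of $P$ is matched by $\M$ to another vertex of $P$. Hence $\M\setminus E(P)$ covers exactly $V(B)-V(P)$ and is a perfect matching of $B-P$. A perfect matching has no cycles, so it is a contributing spanning elementary subgraph of $B-P$. This includes the vacuous case $V(P)=V(B)$.

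\emph{($\Rightarrow$)} Let $H$ be a contributing spanning elementary subgraph of $B-P$. I would first reduce to the case where $H$ has no cycles. Any cycle of $H$ has weight $\pm1$, since it is contributing. If $H$ contains an even cycle, replace it by one of its two perfect matchings; the result is again a spanning elementary subgraph of $B-P$. After these replacements, either $H$ is a perfect matching of $B-P$, or $H$ contains an odd cycle $\Gamma$ with $w(\Gamma)=\pm1$.

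In the first case, $|V(B-P)|$ is even, and so is $|V(P)|$ because $|V(B)|$ is even. Therefore the alternate edges $[i_1,i_2],[i_3,i_4],\dots,[i_{n-1},i_n]$ of $P$ form a perfect matching of $P$. Adding them to $H$ gives a perfect matching $\M$ of $B$, and relative to $\M$ the path $P$ is \al.

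The main obstacle is the second case, which we must rule out. Here $\Gamma$ is an odd cycle of weight $\pm1$ that is independent in $B-P$. Since $B$ is invertible, Theorem~\ref{invertible}, equivalently Lemma~\ref{lem2.4}, implies that $\Gamma$ has at least two pegs relative to every perfect matching of $B$. An odd cycle has an odd number of pegs, so $\Gamma$ has at least three pegs relative to every perfect matching of $B$. Lemma~\ref{span} then says $\Gamma$ is not independent in $B-P$ for any path $P$. This contradicts the fact that $H$ contains $\Gamma$ as a component, so the second case cannot occur and the proof is complete.
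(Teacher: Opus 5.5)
Your proof is correct and follows essentially the same route as the paper. You get the implication from mm-alternating paths by restricting the perfect matching to $B-P$. You get the reverse implication by showing that any cycle surviving in a contributing spanning elementary subgraph of $B-P$ would be an odd weight-$\pm1$ independent cycle; Theorem~\ref{invertible} gives it at least three pegs relative to every perfect matching, which Lemma~\ref{span} forbids. Your explicit step of extending a perfect matching of $B-P$ by the alternate edges of $P$ is, if anything, a cleaner justification than the paper's ``set difference'' remark for why a perfect matching of $B-P$ makes $P$ mm-alternating.
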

	
	\begin{proof}
		Since $B$ is invertible, it possesses a perfect matching $\M$, by Lemma~\ref{lem2.2}.
		
		First, assume that $P$ is an \al path relative to $\M$. Then $\M - P$ is a perfect matching of $B - P$, which constitutes a contributing spanning elementary subgraph of $B - P$.
		
		Conversely, suppose that $B - P$ admits a contributing spanning elementary subgraph for some path $P$. Assume, to the contrary, that $P$ is not an \al path relative to any perfect matching of $B$. In this case, $B - P$ cannot possess a perfect matching; otherwise, the set difference of $\M$ and a perfect matching of $B - P$ would yield a perfect matching of $P$, implying that $P$ is \al relative to $\M$.
		
		By Remark~\ref{rem1}, any independent cycle in $B - P$ must be odd, and by Lemma~\ref{span}, such a cycle can have at most two pegs. According to Theorem~\ref{invertible}, a digraph containing an odd cycle with fewer than three pegs must have weight $\pm \i$. Therefore, every spanning elementary subgraph of $B - P$ necessarily contains a cycle of weight $\pm \i$, and hence cannot be contributing—a contradiction.
		
		It follows that $P$ is an \al path relative to some perfect matching of $B$.
	\end{proof}

	
	Naturally, we may wonder whether the inverse of every unimodular 3-colored digraph is also a 3-colored digraph. The answer, once again, is \textit{no}. This is illustrated in Example \ref{exno3}.
	
	\begin{ex}\label{exno3}
		In Figure \ref{n3col}, $B\in\B$ is invertible and $\det A(B)=-1$, yet $B^+$ is not 3-colored. The weight of the arc $(4,3)$ is $\i$, while all other edges have weight $1$. The entries $A(B)^{-1}=[b_{ij}]$ are computed using Lemma~\ref{inventry}.
		
		For illustration, consider computations of $b_{36}$ and $b_{45}$. For the entry $b_{36}$, observe that there are three $3\leadsto 6$ \al paths each relative to a perfect matching of $B$, namely $P_1=[3,2,5,6]$, $P_2=[3,4,1,6]$, and $P_3=[3,4,1,2,5,6]$. Note that $w(P_1)=1$, while $w(P_2)=w(P_3)=-\i$, and $|P_1|=|P_2|=3$, $|P_3|=5$. Hence, by Lemma~\ref{inventry}, \begin{equation*}
			\begin{split}
				b_{36}&=\frac{1}{\det A(B)}\Big[w(P_1)(-1)^{6-|S_{P_1}|+|P_1|}+w(P_2)(-1)^{6-|S_{P_2}|+|P_2|}+w(P_3)(-1)^{6-|S_{P_3}|+|P_3|}\Big]\\
				&=-\Big[(-1)^{6-1+3}-\i(-1)^{6-1+3}-\i(-1)^{6-0+5}\Big]\\
				&=-1.
			\end{split}
		\end{equation*} Similarly, to find $b_{45}$, fist note that there are three $4\leadsto 5$ \al paths each relative to a perfect matching of $B$, namely $Q_1=[4,1,6,5]$, $Q_2=[4,3,2,5]$, and $Q_3=[4,3,2,1,6,5]$. Clearly, $w(Q_1)=1$, $w(Q_2)=w(Q_2)=\i$, and $|Q_1|=|Q_2|=3$, $|Q_3|=5$. Using, Lemma~\ref{inventry}, \begin{equation*}
		\begin{split}
			b_{45}&=-\Big[w(Q_1)(-1)^{6-|S_{Q_1}|+|Q_1|}+w(Q_2)(-1)^{6-|S_{Q_2}|+|Q_2|}+w(Q_3)(-1)^{6-|S_{Q_3}|+|Q_3|}\Big]\\&=-\Big[(-1)^{6-1+3}+\i (-1)^{6-1+3}+\i (-1)^{6-0+3}\Big]\\
			&=-1.
		\end{split}
		\end{equation*} Others entries are also calculated similarly.
		\begin{figure}[H]
			\begin{center}
				$\vcenter {\hbox{\begin{tikzpicture}
							\SetVertexStyle[MinSize=0.1,FillColor=black]
							\Vertex[label=$1$,position=90]{1}
							\Vertex[y=-1.5,label=$2$,position=-90]{2}
							\Vertex[x=1.5,y=-1.5,label=$3$,position=-90]{3}
							\Vertex[x=1.5,label=$4$,position=90]{4}
							\Vertex[x=-1.5,label=$6$,position=90]{6}
							\Vertex[x=-1.5,y=-1.5,label=$5$,position=-90]{5}
							
							\Text[y=-2.3]{$B$}
							
							\Edge[](1)(2)
							\Edge[](2)(3)
							\Edge[Direct,label=$\i$,position=right](4)(3)
							\Edge[](4)(1)
							\Edge[](1)(6)
							\Edge[](2)(5)
							\Edge[](5)(6)
				\end{tikzpicture}}}$\hspace{2cm}
				$A(B)^{-1}=\left[\begin{array}{cccccc}
					0&-\i&0&1&0&\i\\
					\i&0&1&0&-\i&0\\
					0&1&0&0&0&-1\\
					1&0&0&0&-1&0\\
					0&\i&0&-1&0&1-\i\\
					-\i&0&-1&0&1+\i&0\\
				\end{array}\right]$
			\end{center}\caption{A graph $B\in\B$ such that $B^+$ is not a 3-colored digraph.}\label{n3col}
		\end{figure}
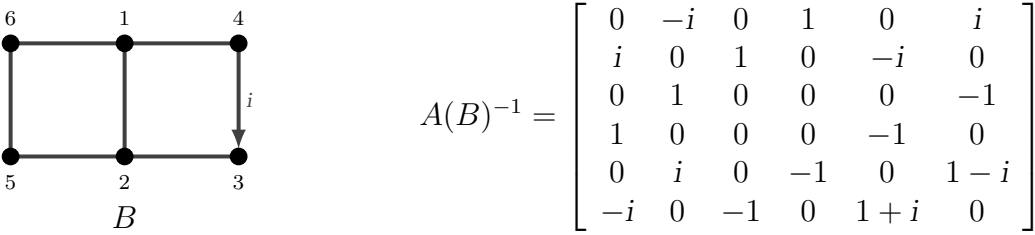
	\end{ex}
	
	From Example~\ref{exno3}, it is clear that not all unimodular $3$-colored digraphs have $3$-colored inverses. Notice that $B$ possesses more than one perfect matching. Next lemma shows that unique perfect matching is necessary for $B^+$ to be 3-colored. Let $|V(B)|=n$ for the rest of this section.
	
	
	\begin{lem}\label{unino3col}
		Let $B\in\B$ be invertible. If $B$ does not possess a unique perfect matching, then $B^+$ is not a 3-colored digraph.
	\end{lem}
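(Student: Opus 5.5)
Since $B$ is invertible, Lemma~\ref{lem2.2} (through Theorem~\ref{invertible}) gives that $B$ has a perfect matching. If it is not unique, $B$ has more than one perfect matching. By Proposition~\ref{mperf} it then contains an independent even cycle, and we split cases by the structure of independent cycles.

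\textbf{Case 1: $B$ has exactly one independent cycle $\Gamma$.} If $w(\Gamma)=\pm\i$, Lemma~\ref{uniqe=3c} applies directly and $B^+$ is not 3-colored. If $w(\Gamma)=\pm1$, Lemma~\ref{1ind} gives $\det A(B)=2(-1)^{n/2}[1-w(\Gamma)(-1)^{|\Gamma|/2}]$. Non-singularity forces this to be $\pm4$. We then compute one entry of $A(B)^{-1}$ to get a non-unit value. Take adjacent vertices $1,2$ on $\Gamma$. By Lemma~\ref{mmal}, the only contributing $1\leadsto2$ paths are \al; as in Lemma~\ref{uniqe=3c}, these are $P=[1,2]$ and $Q$, the rest of $\Gamma$. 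Each of $B-P$ and $B-Q$ has a unique perfect matching, and neither contains a cycle of weight $\pm1$ as a component, since $\Gamma$ is destroyed and the other cycles have weight $\pm\i$ or are not independent. Thus $b_{12}=\frac{1}{\pm4}(\pm w(P)\pm w(Q))$. Here $w(P)w(Q)=\pm1$ with unit moduli, so the numerator is in $\{0,\pm2,\pm2\i\}$. Either way $|b_{12}|\in\{0,1/2\}$, which is not a unit; if $b_{12}=0$ we would argue with another pair instead. Actually $P$ is an edge of the $4\pm$-type matching, and the sign bookkeeping via $(-1)^{|V|-|S_H|+|P|}$ should give a numerator of modulus $2$, hence $b_{12}=\pm\frac12$ or $\pm\frac{\i}{2}$.

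\textbf{Case 2: two or more independent cycles.} In $\B(\infty)$, Lemmas~\ref{lem2} and~\ref{9span} make the two cycles disjoint and even, and Lemma~\ref{2ind} gives $|\det A(B)|\in\{4,8\}$. In $\B(\theta)$, Lemma~\ref{lem} makes all three cycles independent, and Lemma~\ref{thetaperf2} gives $\det A(B)=(-1)^{n/2}(3\mp2)\in\{\pm1,\pm5\}$. When $|\det|\ne1$, pick an edge $[1,2]$ on an independent cycle of weight $\pm\i$ and count the \al $1\leadsto2$ paths and their complementary matchings (at most two terms of unit modulus each, possibly with multiplicity $2$ in the $\infty$ case). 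The resulting numerator has modulus at most $2\sqrt2$ or is of the form $1\pm\i$ times a small integer. Divided by $4$, $8$ or $5$, this is not in $\{\pm1,\pm\i\}$; it is nonzero because the two weights differ by a factor $\pm\i$.

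\textbf{The main obstacle} is the unimodular $\theta$ case with $\det=\pm1$, as in Example~\ref{exno3}. There we must find an explicit entry that is not a unit. Mimicking the example, choose the vertices $x,y$ as endpoints of $\mathcal P$-paths adjacent to a pendant matching structure: take the two internal vertices of $\mathcal P_k$ next to $i$ and $j$ on the path whose cycle union has weight $\pm1$. Then list the three \al paths, one through each pair of $\mathcal P$'s. Their weights are one of modulus-$1$ real and two $\pm\i$ multiples. Their signs, from the parities of $|P|$ and $|S_H|$ via Remark~\ref{rem2} (all $|V(\mathcal P_k)|$ even), should make the two imaginary terms add rather than cancel in at least one carefully chosen entry, yielding something like $1\pm 2\i$ or $1\pm\i$. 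Getting this parity bookkeeping right in general, to guarantee a non-cancelling pair, is the step I expect to be hardest; I would first try the pair of vertices adjacent to $j$ on the two $\pm\i$-weight cycles, exactly as for $b_{56}$ in the example.
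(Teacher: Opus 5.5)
Your case split matches the paper's: Lemma~\ref{uniqe=3c} for a unique independent $\pm\i$ cycle, then a unique independent real-weight cycle, then $\B(\infty)$ and $\B(\theta)$ with several independent cycles. But the proof has a genuine gap. The $\theta$-type case with all three cycles independent and $\det A(B)=\pm1$ is left open, as you acknowledge, and this is exactly the situation of Example~\ref{exno3}. The missing idea is the choice of edge, and the right choice handles $|\det A(B)|\in\{1,5\}$ at once.

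With $w(\Gamma_1)=\pm1$, take an edge $[u_1,u_2]$ interior to $\mathcal{P}_1$. This is the path shared by the real-weight cycle $\Gamma_1$ and the imaginary-weight cycle $\Gamma_2$. (If $\mathcal{P}_1$ is a single edge, use $\mathcal{P}_2$, which is shared by $\Gamma_1$ and $\Gamma_3$; the two cannot both be single edges.) The only $u_1\leadsto u_2$ paths are $Q_1=[u_1,u_2]$, $Q_2$ around $\Gamma_1$, and $Q_3$ around $\Gamma_2$.
\begin{itemize}
\item $B-Q_1$ contains the single cycle $\Gamma_3$, which is even and independent there. So $B-Q_1$ has exactly two perfect matchings, and its remaining spanning elementary subgraph contains $\Gamma_3$ and is non-contributing because $w(\Gamma_3)=\pm\i$.
\item $B-Q_2=B-\Gamma_1$ and $B-Q_3=B-\Gamma_2$ are forests with one perfect matching each.
\end{itemize}
Since $w(Q_2)=\pm w(Q_1)$ and $w(Q_3)=\pm\i\,w(Q_1)$, the numerator in Lemma~\ref{inventry} is $w(Q_1)(a+b\i)$ with $a=\pm2\pm1\in\{\pm1,\pm3\}$ and $b=\pm1$. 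Its modulus is $\sqrt2$ or $\sqrt{10}$, so $b_{u_1u_2}\notin\{0,\pm1,\pm\i\}$ for either determinant and any signs.

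What prevents cancellation is the coefficient $2$ from the two perfect matchings of $B-Q_1$. For the right pair, the path weights are two real multiples of $w(Q_1)$ and one imaginary, not the one real and two imaginary you describe. An arbitrary edge on a $\pm\i$ cycle can fail. In Example~\ref{exno3}, the edge $[1,2]$ is itself one of the three $i\leadsto j$ paths and lies on the $\pm\i$ cycle $[1,2,3,4]$. All three coefficients are then $1$, and $b_{12}=-\i$ is a unit. By contrast, $[5,6]$ is interior to the path $[1,6,5,2]$ shared by the weight-$1$ cycle $[1,2,5,6]$ and a $\pm\i$ cycle, and it gives $b_{56}=1-\i$.

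A smaller gap concerns non-vanishing, which you need because $0$ is an admissible entry. In Case~1, ``if $b_{12}=0$ we would argue with another pair'' is not an argument. Carrying out the exponents in Lemma~\ref{inventry} gives the numerator $(-1)^{n/2}w(P)\bigl[1-w(\Gamma)(-1)^{|\Gamma|/2}\bigr]$. This is the same bracket as in Lemma~\ref{1ind}, so non-singularity forces it to equal $2$, and $b_{12}=w(P)/2$.

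In the $\B(\infty)$ case with $w(\Gamma')=\pm1$, your count of at most two unit terms omits the contributing subgraphs of $B-P$ and $B-Q$ that contain $\Gamma'$, each with coefficient $2$. The numerator is $\pm2\bigl(w(P)-w(Q)(-1)^{|\Gamma|/2}\bigr)\bigl(1-w(\Gamma')(-1)^{|\Gamma'|/2}\bigr)$, of modulus $4\sqrt2$ against $|\det A(B)|=8$. Its non-vanishing uses non-singularity, not only $w(P)=\pm\i\,w(Q)$.

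Your bound in the $\theta$ case with $|\det A(B)|=5$ likewise ignores the third path and the weight-$\pm1$ cycle term. The uniform edge choice above makes that separate case unnecessary.
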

	
	\begin{proof}
		Let $A(B)^{-1}=[b_{ij}]$. By Theorem~\ref{invertible}, the existence of $B^+$ implies that $B$ has a perfect matching. Since $B$ does not possess a unique perfect matching, $B$ has at least two perfect matchings and therefore, an independent cycle.
		
		By Lemma~\ref{uniqe=3c}, if $B$ contains a unique independent cycle of weight $\pm\i$, then $B^+$ is not a 3-colored digraph, and the proof is complete. Consequently, we may assume that either $B$ has more than one independent cycle, or $B$ has a unique independent cycle whose weight is $\pm1$. We consider these cases separately.

		\noindent\textbf{Case 1.} The unique independent cycle in $B$ has weight $\pm1$.
		
		Since $B$ is nonsingular, we have $w(\Gamma)(-1)^{\frac{|\Gamma|}{2}}=-1$ and $\det A(B)=4(-1)^{\frac{n}{2}}$, by Lemma~\ref{1ind}. Let $\Gamma=[1,2,\dots,k,1]$ denote the independent cycle, where $k$ is even. Take the paths $P=[1,2]$ and $Q=[1,k,\dots,2]$. Then both $B-P$ and $B-Q$ have unique perfect matchings, denoted by $\M_P$ and $\M_Q$, respectively.
		
		Note that $|P|=1$, $|Q|=|\Gamma|-1$, $|\M_P|=\frac{n-2}{2}$, and $|\M_Q|=\frac{n-|\Gamma|}{2}$. Moreover, since
		$
		\pm1=w(\Gamma)=w(P)w(Q),
		$
		it follows that $w(Q)=w(\Gamma)w(P)$. Applying Lemma~\ref{inventry}, we obtain
		\begin{equation*}
			\begin{split}
				b_{12}
				&=\frac{(-1)^{\frac{n}{2}}}{4}\big[w(P)(-1)^{n-|\M_P|+|P|}+w(Q)(-1)^{n-|\M_Q|+|Q|}\big]\\
				&=\frac{1}{4}\big[w(P)+w(P)w(\Gamma)(-1)^{\frac{|\Gamma|}{2}+1}\big]\\
				&=\frac{w(P)}{4}\big[1-w(\Gamma)(-1)^{\frac{|\Gamma|}{2}}\big]\\
				&=\frac{w(P)}{2},
			\end{split}
		\end{equation*}
		where we used the fact that $w(\Gamma)(-1)^{\frac{|\Gamma|}{2}}=-1$. Hence $b_{12}\notin\{0,\pm1,\pm\i\}$, and therefore $B^+$ is not a 3-colored digraph.
		
		\medskip
		\noindent\textbf{Case 2.} $B$ has more than one independent cycles.
		
		\smallskip
		\textbf{Case 2a.} $B\in\B(\infty)$ and all cycles are independent.
		
		Let $\Gamma$ and $\Gamma'$ be cycles in $B$, with at least one having weight $\pm\i$. Let $\Gamma=[1,2,\dots,k,1]$ be such that $w(\Gamma)=\pm\i$. We have exactly the two $1\leadsto2$ paths $P=[1,2]$ and $Q=[1,k,\dots,2]$.
		
		Since $\Gamma'$ is independent in $B$, both $B-P$ and $B-Q$ admit two perfect matchings and a spanning elementary subgraph containing $\Gamma'$. Denote these perfect matchings by $\M_{P1},\M_{P2}$ and $\M_{Q1},\M_{Q2}$, respectively. Then
		\[
		|\M_{P1}|=|\M_{P2}|=\frac{n-2}{2},\qquad
		|\M_{Q1}|=|\M_{Q2}|=\frac{n-|\Gamma|}{2}.
		\]
		Moreover, $w(P)=\pm\i\, w(Q)$. Applying Lemma~\ref{inventry} and simplifying the exponents yields
		\begin{equation*}
			\begin{split}
				b_{12}
				&=\frac{\pm2}{\det A(B)}\Big[\left(w(P)-w(Q)(-1)^{\frac{|\Gamma|}{2}}\right)-\mathfrak{R}(w(\Gamma'))\left(w(P)(-1)^{\frac{|\Gamma'|}{2}}-w(Q)(-1)^{\frac{|\Gamma|+|\Gamma'|}{2}}\right)\Big]\\
				&=\frac{\pm 2}{\det A(B)}\Big[\left(w(P)-w(Q)(-1)^{\frac{|\Gamma|}{2}}\right)
				\left(1-\mathfrak{R}(w(\Gamma'))(-1)^{\frac{|\Gamma'|}{2}}\right)\Big]\\
				&\notin \{0,\pm1,\pm\i\},
			\end{split}
		\end{equation*}
		since either $w(\Gamma')=\pm\i$ or $w(\Gamma')(-1)^{\frac{|\Gamma'|}{2}}=-1$ by Theorem~\ref{thmi}. Hence $B^+$ is not a 3-colored digraph.
		
		\smallskip
		\textbf{Case 2b.} $B\in\B(\theta)$ and has at least two independent cycles. By Lemma~\ref{lem}, all cycles are independent.
		
		Let $\Gamma_1,\Gamma_2,\Gamma_3$ denote the cycles in $B$, and let $\M$ be the perfect matching of $B-(\Gamma_1\cup\Gamma_2\cup\Gamma_3)$. Let $\mathcal{P}_1 = [i, u_1, \dots, u_k, j]$, $\mathcal{P}_2 = [i, v_1, \dots, v_l, j]$, and $\mathcal{P}_3 = [i, w_1, \dots, w_m, j]$ be the $i\leadsto j$ paths described in Remark~\ref{paths}, where $k,l,m$ are all even by Remark~\ref{rem2}. Without loss of generality, assume $w(\Gamma_1)=\pm1$ and $w(\Gamma_2),w(\Gamma_3)\in\{\pm\i\}$.
		
		Since $w(\Gamma_1)=\pm w(\mathcal{P}_1)w(\mathcal{P}_2)$, we have $w(\mathcal{P}_1)=\pm w(\mathcal{P}_2)$, and similarly $w(\mathcal{P}_3)=\pm\i\,w(\mathcal{P}_2)$. Consider the edge $[u_1,u_2]$. There are three $u_1\leadsto u_2$ paths:
		\[
		Q_1=[u_1,u_2],\quad
		Q_2=[u_1,i,\mathcal{P}_2,j,u_k,\dots,u_2],\quad
		Q_3=[u_1,i,\mathcal{P}_3,j,u_k,\dots,u_2].
		\]
		
		Observe that \begin{enumerate}[]
			\item $\M_1=\M\cup \{[i,v_1],\dots,[v_l,j],[w_{m},w_{m-1}],\dots,[w_2,w_1]\}\cup \{[u_3,u_4],\dots,[u_{k-1},u_k]\}$ and
			\item  $\M_2=\M\cup \{[v_1,v_2],\dots,[v_{l-1},v_l],[j,w_m],\dots,[w_1,i]\}\cup \{[u_3,u_4],\dots,[u_{k-1},u_k]\}$
		\end{enumerate} are two perfect matchings of $B - Q_1$. Moreover, $H=\M\cup \Gamma_3\cup \{[u_3,u_4],\dots,[u_{k-1},u_k]\}$ is a spanning elementary subgraph of $B-Q_1$ that contains $\Gamma_3$. Similarly, it can be seen that both $B - Q_2$ and $B - Q_3$ each admit exactly one perfect matching and no other spanning elementary subgraphs. Since $w(\Gamma_3) = \pm\i$, $H$ is not a contributing spanning elementary subgraph of $B - Q_1$. Hence, by Lemma~\ref{inventry},
		\[
		b_{u_1u_2}
		=\pm\Big[2w(Q_1)(-1)^{m_1}+w(Q_2)(-1)^{m_2}+w(Q_3)(-1)^{m_3}\Big],
		\]
		where the exponents are as given in Lemma~\ref{inventry} and  $w(Q_2) = \pm \frac{w(\mathcal{P}_1) w(\mathcal{P}_2)}{w(Q_1)}$ and $w(Q_3) = \pm \frac{w(\mathcal{P}_1) w(\mathcal{P}_3)}{w(Q_1)}$. Because one of $w(\mathcal{P}_2)$ and $w(\mathcal{P}_3)$ is real and the other imaginary, it follows that
		$b_{u_1u_2}\notin\{0,\pm1,\pm\i\}$. Thus, $B^+$ is not a 3-colored digraph.
	\end{proof}

	The following lemma shows that having a unique perfect matching is not sufficient for $B^+$ to be 3-colored. The number of pegs on the cycle also plays a role in determining whether $B^+$ is a 3-colored digraph.
	
	\begin{lem}\label{lem5.1}
		Let $B \in \B$ be an invertible digraph with a unique perfect matching. If $B$ does not contain a cycle that has exactly two pegs, then $B^+$ is a 3-colored digraph.
	\end{lem}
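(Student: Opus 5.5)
The plan mirrors the sufficiency half of Theorem~\ref{uni3in}, adapted to bicyclic digraphs. Let $\M$ be the unique perfect matching of $B$ and $A(B)^{-1}=[b_{ij}]$.

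\textbf{Step 1: the determinant and the diagonal.} Since $B$ has a unique perfect matching, Remark~\ref{rem1} shows that $B$ has no independent cycle. Theorem~\ref{uni} then gives $\det A(B)=(-1)^{n/2}=\pm1$. Invertibility already gives $b_{ii}=0$.

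\textbf{Step 2: off-diagonal entries via Lemma~\ref{inventry}.} Fix $i\neq j$. By Lemma~\ref{mmal}, the paths contributing to $b_{ij}$ are exactly the \al paths relative to some perfect matching, hence relative to $\M$.

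For each such path $P$ I will show that $B-P$ has exactly one contributing spanning elementary subgraph, namely $\M\setminus E(P)$.
\begin{itemize}
\item $B-P$ has no other perfect matching. Otherwise Proposition~\ref{mperf} would give an even cycle independent in $B-P$, and its matching combined with $P$'s $\M$-edges would yield a second perfect matching of $B$.
\item Any cycle component of a contributing subgraph of $B-P$ has weight $\pm1$. If it is odd, the argument in the proof of Lemma~\ref{mmal} (through Lemma~\ref{span} and Theorem~\ref{invertible}) rules it out.
\end{itemize}
Granting this, $b_{ij}=\pm\sum_{P}\pm w(P)$, summed over the $i\leadsto j$ \al paths relative to $\M$. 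It therefore suffices to show that there is at most one such path for each pair $i,j$. In that case $b_{ij}\in\{0,\pm1,\pm\i\}$, and since $A(B)^{-1}$ is Hermitian, $B^+$ is 3-colored.

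\textbf{Step 3 (the main obstacle): uniqueness of \al paths.} Suppose $P\neq Q$ are two $i\leadsto j$ \al paths relative to $\M$. Starting from $i$, they both leave along the $\M$-edge at $i$. Let them diverge at a vertex $u$ and first meet again at a vertex $v$. The two $u\leadsto v$ subpaths form a cycle $C$ of $B$.

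Because both paths alternate with respect to the same $\M$, the vertices $u$ and $v$ are each covered on $C$ or by an $\M$-edge leaving $C$ along the common part. All interior vertices of $C$ are matched within $C$. Hence $C$ has either zero pegs or exactly two pegs, the latter being the $\M$-edges at $u$ and $v$ (exactly as in the proof of Theorem~\ref{uni3in}).
\begin{itemize}
\item Zero pegs makes $C$ an independent even cycle, which contradicts uniqueness of $\M$ by Remark~\ref{rem1}.
\item Exactly two pegs contradicts the hypothesis.
\end{itemize}

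The delicate point is the case where $P$ and $Q$ diverge and rejoin several times. In a bicyclic graph the symmetric difference $P\Delta Q$ may involve two or three cycles, for instance in the $\theta$-type setting. I will handle this by choosing the first divergence and the first subsequent common vertex, so that the closed walk formed is a simple cycle. I also need to check that the alternation parity at $u$ and $v$ forces the $\M$-edges there to be pegs rather than edges of $C$. Parity considerations (both subpaths of $C$ start and end with non-$\M$ edges when $u,v$ are matched outside $C$) should settle this.
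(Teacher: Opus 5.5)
Your proposal is correct and follows essentially the paper's route: you get $\det A(B)=\pm1$, use Lemma~\ref{mmal} to restrict to \al paths relative to the unique matching $\M$, show such a path is unique because two of them would produce a cycle with zero or two pegs, and conclude $b_{kl}\in\{0,\pm1,\pm\i\}$. Your first-divergence/first-rejoin construction is a cleaner form of the paper's symmetric-difference step, and the parity check you deferred does go through: the $\M$-edges at $u$ and $v$ always lie off $C$, so $C$ has exactly two pegs. One small point you left implicit in Step~2 is also immediate: an even cycle component in $B-P$ is excluded because replacing it by either of its two matchings gives two perfect matchings of $B-P$.
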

	
	\begin{proof}
		Let $A(B)^{-1}=[b_{ij}]$, and let $k,l\in V(B)$. If there is no $k\leadsto l$ \al path in $B$, then by Lemma~\ref{inventry} and Lemma~\ref{mmal}, we have $b_{kl}=0$. Thus, assume that there exists a $k\leadsto l$ \al path $P$ in $B$.
		
		We claim that $P$ is the unique $k\leadsto l$ \al path in $B$. Suppose, to the contrary, that there exists another such path $Q$. Then the symmetric difference $P\Delta Q$ contains an even cycle $\Gamma$ with either no pegs or exactly two pegs. Since $B$ has a unique perfect matching, $\Gamma$ must have exactly two pegs, contradicting our assumption.
		
		Therefore, $P$ is the unique $k\leadsto l$ \al path in $B$, and consequently $B-P$ has a unique perfect matching. By Remark~\ref{rem1}, $B-P$ contains no independent cycles and hence admits no contributing spanning elementary subgraph other than the perfect matching. Applying Lemma~\ref{inventry}, we obtain
		\[
		b_{kl}=\frac{1}{\det A(B)}\,w(P)\times(-1)^{n-|\M_P|+|P|}.
		\]
		Since $B$ has a unique perfect matching, Theorem~\ref{uni} implies that $\det A(B)=\pm1$. Hence $b_{kl}\in\{\pm1,\pm\i\}$. It follows that for all $i,j\in V(B)$,
		$
		b_{ij}\in\{0,\pm1,\pm\i\},
		$
		and therefore $B^+$ is a 3-colored digraph.
	\end{proof}

	\begin{lem}\label{lem4.2}
		Let $B\in\B$ be an invertible digraph with a unique perfect matching. Suppose that $B$ contains a cycle that has exactly two pegs. Then $B^+$ is a 3-colored digraph if and only if, for every cycle $\Gamma$ in $B$ with exactly two pegs $[u,u']$ and $[v,v']$ ($u,v\in V(\Gamma)$), there exists a pair of $u'\leadsto v'$ \al paths, $P$ and $Q$, satisfying one of the following conditions:
		\begin{enumerate}[i)]
			\item $|P|-|Q|\equiv 2\pmod 4$ and $w(P)=w(Q);$
			\item $|P|-|Q|\equiv 0\pmod 4$ and $w(P)=-w(Q)$.
		\end{enumerate}
	\end{lem}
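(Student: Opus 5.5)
Since $B$ has a unique perfect matching $\M$, Theorem~\ref{uni} gives $\det A(B)=\pm1$. By Lemma~\ref{mmal} and Lemma~\ref{inventry}, each $b_{kl}$ is a signed sum over the $k\leadsto l$ \al paths $P$ relative to $\M$. For each such path, $B-P$ has the unique perfect matching $\M\setminus E(P)$ and no independent cycle (Remark~\ref{rem1}). Hence each path contributes exactly $\pm w(P)(-1)^{n-|\M_P|+|P|}$, with $|\M_P|=(n-|P|-1)/2$.

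The plan is therefore to understand, for each pair $k,l$, the set of $k\leadsto l$ \al paths. As in the proof of Lemma~\ref{lem5.1}, two distinct such paths $P,Q$ have symmetric difference containing an even cycle with no pegs or exactly two pegs. No pegs is impossible by uniqueness of $\M$, so the cycle has exactly two pegs.

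\textbf{Step 1.} Show that at most two \al paths join any pair. If three existed, pairwise symmetric differences would force at least three two-peg cycles, or a configuration generating more cycles than $B$ has. In $\B(\infty)$ there are only two cycles; in $\B(\theta)$ a two-peg cycle $\Gamma_a$ together with a third path would create a further independent-type cycle contradicting uniqueness. I expect this to be a case check using Remark~\ref{rem4}-style arguments.

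\textbf{Step 2.} Show that when exactly two paths $P,Q$ exist, they arise from a cycle $\Gamma$ with exactly two pegs $[u,u'],[v,v']$. The two paths share a common prefix $k\leadsto u'$ and suffix $v'\leadsto l$, and differ by taking the two arcs of $\Gamma$ between $u$ and $v$ (together with the pegs). Then $b_{kl}$ equals the common factor $w(\text{prefix})w(\text{suffix})$, up to sign, times the value of the corresponding $u'\leadsto v'$ sum. So it suffices to treat $k=u'$, $l=v'$, where
\[
b_{u'v'}=\pm\big[w(P)(-1)^{\frac{|P|+1}{2}}+w(Q)(-1)^{\frac{|Q|+1}{2}}\big].
\]
This equals $\pm w(P)\big[1+\frac{w(Q)}{w(P)}(-1)^{\frac{|Q|-|P|}{2}}\big]$, using $n-|\M_P|+|P|\equiv (|P|+1)/2 \pmod 2$ after discarding the common factor $(-1)^{n/2}$.

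\textbf{Step 3.} Evaluate this. Since $w(P)\overline{w(Q)}=\pm w(\Gamma)$ has modulus one, the bracket lies in $\{0,\pm1,\pm\i\}$ only if it is $0$. This happens exactly when $w(Q)(-1)^{(|Q|-|P|)/2}=-w(P)$, which gives conditions i) and ii). If instead $w(Q)/w(P)=\pm\i$, the value is $1\pm\i$, and if the real ratio has the wrong sign, the value is $\pm2$; neither is allowed. So $B^+$ is 3-colored if and only if every two-peg cycle gives a cancelling pair. Pairs with a single \al path contribute $\pm w(P)\in\{\pm1,\pm\i\}$ as in Lemma~\ref{lem5.1}.

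The main obstacle is Step 1 together with the reduction in Step 2. I need to verify that any pair $k,l$ with two \al paths factors through some two-peg cycle's $u'\leadsto v'$ pair, and that, conversely, every two-peg cycle indeed yields such a pair. The converse is immediate from the pegs: go $u'\to u$, around either arc, then $v\to v'$. The forward direction is where I would lean on the cycle-counting contradictions used in Lemma~\ref{span}.
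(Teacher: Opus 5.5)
Your Step~1 is false, and both directions of your argument depend on it. A bicyclic digraph with a unique perfect matching can have three or even four mm-alternating paths between two vertices. This is why the paper's proof splits into Cases~1--3 (two, three and four paths; Figure~\ref{lemfig}).

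For three paths, take the $\theta$-graph with edges $[u,v]$, $[u,a_1],[a_1,a_2],[a_2,v]$, $[u,b_1],[b_1,b_2],[b_2,v]$ and pendant edges $[u,u'],[v,v']$.
\begin{itemize}
\item Its unique perfect matching is $\{[u,u'],[v,v'],[a_1,a_2],[b_1,b_2]\}$.
\item All three cycles are even and carry exactly the two pegs $[u,u'],[v,v']$. None of them is independent, so uniqueness is not violated, contrary to your $\B(\theta)$ heuristic.
\item $[u',u,v,v']$, $[u',u,a_1,a_2,v,v']$ and $[u',u,b_1,b_2,v,v']$ are three $u'\leadsto v'$ mm-alternating paths.
\end{itemize}
For four paths, take the $4$-cycles $[u,v,c,d]$ and $[x,y,c',d']$, pendant edges $[u,u'],[y,y']$, and edges $[v,v'],[v',x'],[x',x]$. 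The unique perfect matching is $\{[u,u'],[v,v'],[c,d],[x,x'],[y,y'],[c',d']\}$. A $u'\leadsto y'$ mm-alternating path may use either arc of each cycle, which gives four paths. Having only two cycles in $\B(\infty)$ does not cap the count at two, because the choices multiply.

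Consequently your sufficiency argument never treats these entries. Your necessity argument is also incomplete: in the $\theta$ example $b_{u'v'}$ is a three-term sum, not the two-term bracket of Step~3. What is missing is exactly the paper's remaining case analysis.
\begin{itemize}
\item \textbf{Three paths.} The pairwise ratios of the contributions are, up to sign and conjugation, the weights of the three cycles, and exactly one of these is real. So the contributions are $\alpha,\beta,\gamma$ with $\beta=\pm\alpha$ and $\gamma=\pm\i\alpha$. The sum $\alpha+\beta+\gamma$ lies in $\{0,\pm1,\pm\i\}$ only if $\beta=-\alpha$; otherwise it equals $(2\pm\i)\alpha$. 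The condition $\beta=-\alpha$ is precisely i) or ii) for the real-ratio pair, and then $b_{kl}=\gamma$ is a unit (the paper's Case~2). This same computation is what necessity needs at $b_{u'v'}$ in the $\theta$ situation.
\item \textbf{Four paths.} In the series configuration, both the weights and the sign factors $(-1)^{(|P|+1)/2}$ factor over the two cycle segments. Hence $b_{kl}$ is a unit times the product of the two cycles' two-term brackets, and it vanishes under the hypothesis (the paper's Case~3, where the paths cancel in pairs).
\end{itemize}
Apart from this, your Steps~2 and~3 reproduce the paper's Case~1 computation correctly.
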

	%
	%
	\begin{proof}
		Let $A(B)^{-1}=[b_{ij}]$, and let $k,l\in V(B)$. If there exists at most one $k\leadsto l$ \al path, then $b_{kl}\in\{0,\pm1,\pm\i\}$, by Lemma~\ref{mmal}. Thus, assume that there are at least two $k\leadsto l$ \al paths, $P_1$ and $P_2$. Since $B$ has a unique perfect matching, the symmetric difference $P_1\Delta P_2$ contains an even cycle with exactly two pegs. Let $\Gamma$ be the cycle with exactly two pegs $[u,u']$ and $[v,v']$, where $u,v\in V(\Gamma)$, and let $P$ and $Q$ be two $u'\leadsto v'$ \al paths.
		
		First, assume that $B$ satisfies the stated conditions. Label the vertices of $\Gamma$ as $\Gamma=[u=u_1,u_2,\dots,u_p=v,u_{p+1},\dots,u_{q},u]$, then $P=[u',u=u_1,\dots,u_{p-1},u_p=v,v']$ and $Q=[u',u=u_1,u_q,\dots,u_{p+1},u_p=v,v']$. Observe that in a bicyclic graph, there are at most four \al paths between any two vertices. Accordingly, we have the following three cases:
		
		\smallskip
		\noindent\textbf{Case 1.} There are exactly two $k\leadsto l$ \al paths (see Figure~\ref{lemfig}(a)). Since $P_1$ and $P_2$ are the only $k\leadsto l$ \al paths in $B$, one of them must contain $P$ and the other must contain $Q$. Without loss of generality, assume that $P_1$ contains $P$ and $P_2$ contains $Q$. Then $P_1=[k,k_1,\ldots,u',P,v',\ldots,l]$ and $P_2=[k,k_1,\ldots,u',Q,v',\ldots,l]$. We see that $w(P_1)=w(S)w(P)w(T)$ and $w(P_2)=w(S)w(Q)w(T)$, where $S=[k,k_1,\ldots,u']$ and $T=[v',\ldots,l]$. Consequently, $w(P_1)=w(P_2)$ when $w(P)= w(Q)$, and $w(P_1)=-w(P_2)$ when $w(P)=-w(Q)$. Moreover, $|P_1|=|S|+|P|+|T|$ and $|P_2|=|S|+|Q|+|T|$, therefore $|P_1|-|P_2|\equiv (|P|-|Q|)\pmod 4$. Hence, by Lemma~\ref{inventry},
		\begin{equation*}
			\begin{split}
				b_{kl}&=\frac{1}{\det A(B)}\left[w(P_1)\times (-1)^{n-\frac{n-|P_1|-1}{2}+|P_1|}+w(P_2)\times (-1)^{n-\frac{n-|P_2|-1}{2}+|P_2|}\right]\\
				&=\pm \left[w(P_1)(-1)^{\frac{3|P_1|+1}{2}}+w(P_2)(-1)^{\frac{3|P_2|+1}{2}}\right]\\
				&=0.
			\end{split}
		\end{equation*}
		\smallskip
		\noindent\textbf{Case 2.} There are three $k\leadsto l$ \al paths (see Figure~\ref{lemfig}(b)). Let $P_3$ be the $k\leadsto l$ \al path distinct from $P_1$ and $P_2$. Since $w(P_3)\in \{\pm1,\pm\i\}$, we have, by Lemma~\ref{inventry},
		\begin{equation*}
			\begin{split}
				b_{kl}&=\pm\left[w(P_1)\times (-1)^{\frac{3|P_1|+1}{2}}+w(P_2)\times (-1)^{\frac{3|P_2|+1}{2}}+w(P_3)\times (-1)^{\frac{3|P_3|+1}{2}}\right]\\
				&=w(P_3)\times (-1)^{\frac{3|P_3|+1}{2}}\in \{\pm1,\pm\i\}.
			\end{split}
		\end{equation*}
		\smallskip
		\noindent\textbf{Case 3.} There are four $k\leadsto l$ \al paths (see Figure~\ref{lemfig}(c)). Let $P_3$ and $P_4$ be the $k\leadsto l$ \al paths distinct from $P_1$ and $P_2$. Then the symmetric difference $P_3\Delta P_4$ contains an even cycle, distinct from $\Gamma$, with exactly two pegs. Let $\Gamma'$ be this cycle, with pegs $[x,x']$ and $[y,y']$, where $x,y\in V(\Gamma')$. Let $R$ and $S$ be two $x'\leadsto y'$ paths satisfying the conditions of the lemma. Arguing analogously to Case~1, we find that $w(P_3)=\pm w(P_4)$ and $|P_3|-|P_4|\equiv (|R|-|S|)\pmod 4$. Hence, by Lemma~\ref{inventry},
		\begin{equation*}
			\begin{split}
				b_{kl}&=\pm \sum_{i=1}^{4}w(P_i)\times (-1)^{\frac{3|P_i|+1}{2}}\\
				b_{kl}&=\pm\left[w(P_1) (-1)^{\frac{3|P_1|+1}{2}}+w(P_2) (-1)^{\frac{3|P_2|+1}{2}}+w(P_3) (-1)^{\frac{3|P_3|+1}{2}}+w(P_4) (-1)^{\frac{3|P_4|+1}{2}}\right]\\
				&=0.
			\end{split}
		\end{equation*}
		Thus, $b_{ij}\in\{0,\pm1,\pm\i\}$ for all pairs of vertices $i,j$, and hence $B^+$ is a 3-colored digraph.
		
		The converse is straightforward.
		
		\begin{figure}[H]
			\begin{center}
				\begin{tabular*}{\textwidth}{@{\extracolsep{\fill}} cc}
					\begin{tikzpicture}
						\SetVertexStyle[MinSize=0.1,FillColor=black]
						\Vertex[label=$k$,position=90]{k}
						\Vertex[x=2,label=$u'$,position=90]{u'}
						\Vertex[x=3,label=$u$,position=90]{u}
						\Vertex[x=4.5,label=$v$,position=90]{v}
						\Vertex[x=5.5,label=$v'$,position=90]{v'}
						\Vertex[x=7,label=$l$,position=90]{l}
						\draw[dotted] (3,0)..controls(4,1.2)..(4.5,0);
						\draw[dotted] (3,0)..controls(3.5,-1.2)..(4.5,0);
						\draw[dotted] (0,0)--(2,0);
						\draw[dotted] (5.5,0)--(7,0);
						\Edge(u')(u)
						\Edge(v)(v')
					\end{tikzpicture} & \begin{tikzpicture}
						\SetVertexStyle[MinSize=0.1,FillColor=black]
						\Vertex[label=$k$,position=90]{k}
						\Vertex[x=2,label=$u'$,position=90]{u'}
						\Vertex[x=3,label=$u$,position=90]{u}
						\Vertex[x=4.5,label=$v$,position=90]{v}
						\Vertex[x=5.5,label=$v'$,position=90]{v'}
						\Vertex[x=7.5,label=$l$,position=90]{l}
						\draw[dotted] (0,0)--(2,0);
						\draw[dotted] (3,0)..controls(3.75,1.2)..(4.5,0);
						\draw[dotted] (3,0)..controls(4,-1.2)..(4.5,0);
						\draw[dotted] (3,0)--(4.5,0);
						\draw[dotted] (5.5,0)--(7.5,0);
						\Edge(u')(u)
						\Edge(v')(v)
					\end{tikzpicture}\\
					(a) & (b)\\
					\multicolumn{2}{c}{\begin{tikzpicture}
							\SetVertexStyle[MinSize=0.1,FillColor=black]
							\SetVertexStyle[MinSize=0.1,FillColor=black]
							\Vertex[label=$k$,position=90]{k}
							\Vertex[x=2,label=$u'$,position=90]{u'}
							\Vertex[x=3,label=$u$,position=90]{u}
							\Vertex[x=4.5,label=$v$,position=90]{v}
							\Vertex[x=5.5,label=$v'$,position=90]{v'}
							\Vertex[x=7,label=$x'$,position=90]{x'}
							\draw[dotted] (3,0)..controls(3.5,1.2)..(4.5,0);
							\draw[dotted] (3,0)..controls(4,-1.2)..(4.5,0);
							\draw[dotted] (0,0)--(2,0);
							\draw[dotted] (5.5,0)--(7,0);
							\Edge(u')(u)
							\Edge(v)(v')
							\Vertex[x=8,label=$x$,position=90]{x}
							\Vertex[x=9.5,label=$y$,position=90]{y}
							\Vertex[x=10.5,label=$y'$,position=90]{y'}
							\Vertex[x=12,label=$l$,position=90]{l}
							\Edge(x)(x')
							\Edge(y)(y')
							\draw[dotted] (8,0)..controls(8.5,1.2)..(9.5,0);
							\draw[dotted] (8,0)..controls(8.75,-1.2)..(9.5,0);
							\draw[dotted] (10.5,0)--(12,0);
					\end{tikzpicture}}\\
					\multicolumn{2}{c}{(c)}\\
				\end{tabular*}
			\end{center}\caption{Structures of $B$ corresponding to two, three, and four $k\leadsto l$ \al paths.}\label{lemfig}
		\end{figure}
	\end{proof}

	\begin{thm}
		Let $B\in\B$ be invertible. Then $B^+$ is a 3-colored digraph if and only if $B$ has a unique perfect matching and one of the following conditions holds:
		\begin{enumerate}
			\item $B$ contains no cycle that has exactly two pegs.
			\item $B$ contains at least one cycle that has exactly two pegs, and for every cycle that has exactly two pegs $[u,u']$ and $[v,v']$, there exists two $u'\leadsto v'$ \al paths, $P$ and $Q$, satisfying one of the following:
			\begin{enumerate}[i)]
				\item $|P|-|Q|\equiv 2\pmod 4$ and $w(P)=w(Q)$; or
				\item $|P|-|Q|\equiv 0\pmod 4$ and $w(P)=-w(Q)$.
			\end{enumerate}
		\end{enumerate}
	\end{thm}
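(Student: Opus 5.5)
The plan is to assemble the theorem from the three lemmas just proved, Lemma~\ref{unino3col}, Lemma~\ref{lem5.1} and Lemma~\ref{lem4.2}, and to split according to whether $B$ has a unique perfect matching and whether it contains a cycle with exactly two pegs.

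For necessity, assume $B$ is invertible and $B^+$ is a 3-colored digraph. The contrapositive of Lemma~\ref{unino3col} shows at once that $B$ has a unique perfect matching; recall that a perfect matching exists at all by Theorem~\ref{invertible}. Since pegs are defined relative to a perfect matching, and that matching is now unique, the phrase ``exactly two pegs'' is unambiguous. If no cycle of $B$ has exactly two pegs, condition~1 holds and we are done. Otherwise $B$ meets the hypotheses of Lemma~\ref{lem4.2}. Its ``only if'' direction then gives the existence, for each two-peg cycle with pegs $[u,u']$, $[v,v']$, of $u'\leadsto v'$ \al paths $P,Q$ satisfying (i) or (ii), which is condition~2.

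For sufficiency, suppose $B$ has a unique perfect matching. If condition~1 holds, Lemma~\ref{lem5.1} gives directly that $B^+$ is 3-colored. If condition~2 holds, the ``if'' direction of Lemma~\ref{lem4.2} applies verbatim. Unimodularity, which is needed so that the entries are actually in $\{0,\pm1,\pm\i\}$, is already built into those lemmas through Theorem~\ref{uni}.

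The main obstacle is the converse half of Lemma~\ref{lem4.2}, which the paper dismisses as ``straightforward''. I would verify it directly. Take a two-peg cycle $\Gamma$ with pegs $[u,u']$, $[v,v']$, and let $P$, $Q$ be the two $u'\leadsto v'$ \al paths through the two arcs of $\Gamma$. Then $w(P)w(Q)^{-1}=\pm w(\Gamma)$, so $w(P)=\pm w(Q)$ or $w(P)=\pm\i\,w(Q)$.

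Using Lemma~\ref{inventry}, together with the facts that $B-P$ and $B-Q$ have unique perfect matchings and $\det A(B)=\pm1$, one gets
\[
b_{u'v'}=\pm\big[w(P)(-1)^{\frac{3|P|+1}{2}}+w(Q)(-1)^{\frac{3|Q|+1}{2}}\big]+(\text{contribution of other \al paths}).
\]
The other \al paths come either in cancelling pairs or number at most one, by the case analysis already carried out in the proof of Lemma~\ref{lem4.2}.

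If neither (i) nor (ii) holds, the bracket equals $\pm 2w(P)$ or $\pm(1\pm\i)w(P)$, which lies outside $\{0,\pm1,\pm\i\}$. Adding at most one further unit term cannot repair this, since a unit plus $\pm2$ or $\pm(1\pm\i)$ is never in $\{0,\pm1,\pm\i\}$. Hence $B^+$ is not 3-colored, which gives the converse and completes the theorem.
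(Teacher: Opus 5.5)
Your reduction to Lemma~\ref{unino3col}, Lemma~\ref{lem5.1} and Lemma~\ref{lem4.2} is exactly the paper's proof, which consists of nothing more than that citation. The problem is the step you add to verify the converse of Lemma~\ref{lem4.2}. Your claim that adding one unit to $\pm2w(P)$ or $\pm(1\pm\i)w(P)$ never lands in $\{0,\pm1,\pm\i\}$ is false ($2-1=1$, $(1+\i)-1=\i$), and such a repair really occurs.

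Take three internally disjoint paths $[u,a_1,a_2,v]$, $[u,b_1,b_2,v]$, $[u,c_1,c_2,v]$ plus pendant edges $[u,u']$ and $[v,v']$. Make all edges red except a blue edge $[u,b_1]$ and a green arc $(u,c_1)$. This $B$ has the following properties:
\begin{enumerate}
\item The perfect matching $\{[u',u],[a_1,a_2],[b_1,b_2],[c_1,c_2],[v,v']\}$ is unique and all cycles are even, so $B$ is invertible with $\det A(B)=\pm1$.
\item Each of the three $6$-cycles has exactly the two pegs $[u,u']$ and $[v,v']$.
\item The only pair of vertices joined by more than one mm-alternating path is $\{u',v'\}$. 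They are joined by $R_a,R_b,R_c$ (through the $a$-, $b$-, $c$-paths), all of length $5$, with weights $1,-1,\i$.
\end{enumerate}
Hence $b_{u'v'}=\pm\i$ and $B^+$ is 3-colored. Yet for $\Gamma=[u,a_1,a_2,v,c_2,c_1,u]$, of weight $-\i$, the arc pair $(R_a,R_c)$ satisfies neither (i) nor (ii). Its bracket is $\pm(1+\i)$, and the third term $\mp1$ turns it into $\pm\i$. So your argument, which uses only the failure of the arc pair, would wrongly conclude that $B^+$ is not 3-colored. If condition 2 is read with $P,Q$ fixed as the two arcs of $\Gamma$, this $B$ refutes the converse outright. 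The theorem holds only because condition 2 is existential over all pairs of $u'\leadsto v'$ mm-alternating paths, and here $(R_a,R_b)$ satisfies (ii).

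To close the gap you must use the full negation. No two $u'\leadsto v'$ mm-alternating paths satisfy (i) or (ii); that is, whenever two of them have weights differing by a real factor, their terms in $b_{u'v'}$ are equal, not opposite. Each such path is $[u',u]$, then a $u\leadsto v$ path avoiding $u',v'$, then $[v,v']$, and there are at most three of them. The only placement giving four $u\leadsto v$ routes is $u$ interior to $\mathcal{P}_1$ and $v$ interior to $\mathcal{P}_2$. There, the two routes through $\mathcal{P}_3$ need incompatible matching edges at the branch vertices, so they cannot both be mm-alternating.
\begin{enumerate}
\item With two paths, the sum is $2c$ or $(1\pm\i)c$ for a unit $c$.
\item With three paths, $B$ is of $\theta$-type and the pairwise symmetric differences are its three cycles. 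Under the paper's standing assumption exactly one of these has real weight. So the terms are $c,c,\pm\i c$ in some order, and the sum $(2\pm\i)c$ has modulus $\sqrt5$.
\end{enumerate}
Your appeal to the case analysis of Lemma~\ref{lem4.2} for cancelling pairs is also unavailable in this direction. That cancellation was derived from the very hypothesis you are negating. It is harmless only because four $u'\leadsto v'$ paths cannot occur.
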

	\begin{proof}
		The theorem follows from Lemma \ref{unino3col}, Lemma \ref{lem5.1}, and Lemma \ref{lem4.2}.
	\end{proof}
	
	\subsection*{Acknowledgments}
	
	The author sincerely thank the referees and editors for their careful reading of the manuscript and for their invaluable suggestions, which have improved its presentation. The author also expresses gratitude to his Ph.D. supervisor, Dr. Debajit Kalita, for his guidance throughout the research. 
	
	\subsection*{Declaration of competing interest}
	
	There is no competing interest.
	
	\subsection*{Funding}
	The author acknowledges the financial assistance provided by the UGC, Government of India, under the UGC SRF scheme, bearing NTA Reference No.: 191620004617.

\end{document}